\newtheorem{thm}{Theorem}[section]
\newtheorem{lem}[thm]{Lemma}
\newtheorem{prop}[thm]{Proposition}
\newtheorem{cor}[thm]{Corollary}
\theoremstyle{definition}
\newtheorem{definition}[thm]{Definition}
\newtheorem{remark}[thm]{Remark}
\newtheorem{example}[thm]{Example}
\newcommand{\ra}{\rightarrow}
\def\g{\mathfrak{g}}
\newcommand{\bbB}{\mathbf}
\newcommand{\NN}{{\bbB N}}
\newcommand{\QQ}{{\bbB Q}}
\newcommand{\RR}{{\bbB R}}
\newcommand{\ZZ}{{\bbB Z}}
\newcommand{\bg}{{\bbB g}}
\newcommand{\bX}{{\bbB X}}
\newcommand{\cF}{{\mathcal F}}
\newcommand{\cL}{{\mathcal L}}
\newcommand{\cM}{{\mathcal M}}
\newcommand{\bx}{{\mathbf x}}
\newcommand{\bN}{{\mathbb N}}
\newcommand{\bR}{{\mathbb R}}
\newcommand{\fg}{{\mathfrak g}}
\newcommand{\sx}{{\mathsf x}}
\newcommand{\sr}{{\mathsf r}}
\newcommand{\su}{{\mathsf u}}
\DeclareMathOperator{\Span}{span}
\DeclareMathOperator{\ad}{ad}
\DeclareMathOperator{\Lie}{Lie}
\DeclareMathOperator{\End}{End}
\DeclareMathOperator{\diag}{diag}
\DeclareMathOperator{\rrk}{rk}
\DeclareMathOperator{\codim}{codim}
\DeclareMathOperator{\maj}{maj}
\begin{document}

\title{Diophantine properties of nilpotent Lie groups}

\begin{abstract}
A finitely generated subgroup $\Gamma$ of a real Lie group $G$ is said to be Diophantine if there is $\beta>0$ such that non-trivial elements in the word ball $B_\Gamma(n)$ centered at $1 \in \Gamma$ never approach the identity of $G$ closer than $|B_\Gamma(n)|^{-\beta}$. A Lie group $G$ is said to be Diophantine if for every $k \geq 1$ a random $k$-tuple in $G$ generates a Diophantine subgroup. Semi-simple Lie groups are conjectured to be Diophantine but very little is proven in this direction. We give a characterization of Diophantine nilpotent Lie groups in terms of the ideal of laws of their Lie algebra. In particular we show that nilpotent Lie groups of class at most $5$, or derived length at most $2$, as well as rational nilpotent Lie groups are Diophantine. We also find that there are non Diophantine nilpotent and solvable (non nilpotent) Lie groups.  \end{abstract}

\author{Menny Aka,  Emmanuel Breuillard, Lior Rosenzweig, Nicolas de Saxc\'e}

\subjclass[2010]{Primary 22E25,11K60; Secondary 17B01}

\keywords{word maps, nilpotent Lie groups, Diophantine subgroup, equidistribution}

\address{Section de math�matiques\\ EPFL\\
Station 8 - B\^at. MA\\
CH-1015 Lausanne\\
SWITZERLAND}
\email{menashe-hai.akkaginosar@epfl.ch}

\address{Laboratoire de Math\'ematiques\\
B\^atiment 425, Universit\'e Paris Sud 11\\
91405 Orsay\\
FRANCE}
\email{emmanuel.breuillard@math.u-psud.fr}

\address{Department of mathematics\\
KTH\\
SE-100 44  Stockholm\\
SWEDEN}
\email{lior.rosenzweig@gmail.com}

\address{Einstein Institute of Mathematics\\
Givat Ram\\
The Hebrew University\\
Jerusalem, 91904\\
ISRAEL}
\email{saxce@ma.huji.ac.il}

\maketitle
\setcounter{tocdepth}{1}

\tableofcontents

\section{Introduction}

Let $G$ be a connected real Lie group, endowed with a left-invariant Riemannian metric $d$. We investigate Diophantine properties of finitely generated subgroups of $G$. If $\Gamma=\langle S\rangle$ is a subgroup of $G$ with finite generating set $S$, we define, for $n\in\bN$,
$$\delta_\Gamma(n)=\min\{ d(x,1)\,|\,x\in B_\Gamma(n),\, x\neq 1\}$$
where $B_\Gamma(n)=(S  \cup S^{-1} \cup\{1\})^n$ is the ball of radius $n$ centered at $1$ for the word metric defined by the generating set $S$.
We will say that $\Gamma=\langle S\rangle$ is \emph{$\beta$-Diophantine} if there exists a constant $c>0$ (depending maybe on $\Gamma$ and $S$ and $d$) such that for all $n\geq 1$,
$$\delta_\Gamma(n)\geq c\cdot|B_{\Gamma}(n)|^{-\beta}.$$
 In general, the Diophantine exponent $\beta$ may  depend on the choice of a generating set $S$. But if $G$ is nilpotent, then it depends neither on the choice of generating set of $\Gamma$, nor on the choice of left-invariant Riemannian metric. So the notion of a $\beta$-Diophantine finitely generated subgroup of $G$ makes sense without mention of a choice of generating set, or metric\footnote{The left-invariant assumption is just for convenience, because any two Riemannian metrics on $G$ are comparable in a neighborhood of the identity; however it is important that the metric be Riemannian. Allowing the metric to be sub-Riemannian could affect the Diophantine exponents, yet not the property of being Diophantine.}. This notion naturally generalizes the classical notion of a Diophantine number in dynamical systems and number theory: $\gamma \in \RR/\ZZ$ is a Diophantine rotation if and only if the cyclic subgroup it generates in the torus $\RR/\ZZ$ is Diophantine in our sense.

We will say that the ambient group $G$ is \emph{$\beta$-Diophantine} for $k$-tuples if almost every $k$-tuple of elements of $G$, chosen with respect to the Haar measure of $G^k$, generates a $\beta$-Diophantine subgroup. We will also say that $G$ is Diophantine for $k$-tuples if $G$ is $\beta$-Diophantine for $k$-tuples and some $\beta=\beta(k)>0$. Finally we say that $G$ is Diophantine if for every integer $k \geq 1$ it is Diophantine for $k$-tuples. We will show that for every $k$ there are Lie groups $G$ that are Diophantine for $k$-tuples, but not for $(k+1)$-tuples (see Theorem \ref{everykk}). However if $G$ is Diophantine and nilpotent then $\beta$ can always be chosen independently of $k$ (see Theorem \ref{concludeth}).

\bigskip

When the ambient group $G$ is a connected abelian Lie group, the behavior of $\delta_\Gamma(n)$ depends on the rational approximations to a set of generators of $\Gamma$. This case has been much studied in the past and constitutes a classical chapter of Diophantine approximation. For example, from Dodson's generalization \cite{dodson} of Jarn\'ik's Theorem \cite{jarnik}, it is not difficult to compute the Hausdorff dimension of the set of $\beta$-Diophantine $k$-tuples in any abelian connected Lie group.

On the other hand, little is known in the non-commutative setting. Gamburd Jakobson and Sarnak \cite{GJS} first identified the relevance of the Diophantine property for dense subgroups of $G=SU(2)$ in their study of the spectral properties of averaging operators on $L^2(SU(2))$ and the speed of equidistribution of random walks therein. In particular they stressed that although there is a $G_\delta$-dense set of $k$-tuples in $SU(2)$ which are not Diophantine (this assertion still holds in any nilpotent Lie group as we will see cf. Prop. \ref{baire}), every $k$-tuple whose matrix entries are algebraic numbers is a Diophantine tuple. Then it is natural to conjecture (as Gamburd et al. did in \cite[Conjecture 29]{gamburd-hoory}) that almost every $k$-tuple in the measure-theoretic sense is Diophantine. Although this remains an open problem, Kaloshin and Rodnianski \cite{KR} have shown that almost every $k$-tuple in $SU(2)$ generates a subgroup $\Gamma$ which is weakly Diophantine in the sense that $B_\Gamma(n)$ avoids a ball of radius $e^{-C_kn^2}$ centered at $1$ (the genuine Diophantine property as defined above would require this radius to be much larger, namely bounded below by some $e^{-C_kn}$). Similarly the second named author showed in \cite[Corollary 1.11]{breuillard-sl2} (as a consequence of the uniform Tits alternative) that every $k$-tuple generating a dense subgroup of $SU(2)$ satisfies a closely related weak form of the Diophantine property. Finally Bourgain and Gamburd proved in \cite{bourgaingamburdsu2} that every (topologically generating) $k$-tuple in $SU(2)$ whose matrix entries are algebraic numbers has a spectral gap in $L^2(SU(2))$. Their proof uses the Diophantine property of these $k$-tuples in an essential way. Conjecturally almost all, and perhaps even all, topologically generating $k$-tuples in $SU(2)$ have a spectral gap (cf. Sarnak's spectral gap conjecture \cite[p. 58]{sarnakmodularforms}).

Recently P. Varj\'u \cite{varjudiophantine} tackled a similar problem for the group of affine transformations of the line, the $\{ax+b\}$ group, and showed that in a certain one-parameter family of $2$-tuples in this group, almost every $2$-tuple is Diophantine.

\bigskip

In this paper we investigate Diophantine properties of finitely generated subgroups of nilpotent and solvable Lie groups. Perhaps the main surprise in our findings is that, in sharp contrast with the abelian case,  not every solvable or nilpotent real Lie group is Diophantine. We exhibit examples of connected nilpotent Lie groups $G$ of nilpotency class $6$ and higher, and of non-nilpotent connected solvable Lie groups $G$ of derived length $3$ and higher, which are not Diophantine. And indeed in those examples one can find, for every $k \geq 3$, a sequence of words $w_n$ in $k$-letters, which are not laws of $G$, but behave like almost laws (see \S \ref{almostlaws}) inasmuch as for any fixed compact set $K$ in $G^k$, $w_n(K) \to 1$ with arbitrarily fast speed as the length of the word $l(w_n)$ tends to $+\infty$. To put it briefly, the bad behavior of Liouville numbers with respect to Diophantine approximation by rationals can be replicated to build real Lie algebras exhibiting this bad behavior. Of course such Lie algebras are not defined over $\QQ$.

\bigskip

If $\fg$ is a Lie algebra of step $s$, we define the ideal of laws on $k$ letters $\cL_{k,s}(\fg)$ of $\fg$ as the set of all elements of the free step $s$ nilpotent Lie algebra on $k$ generators $\cF_{k,s}$ that are identically zero when evaluated on $k$-tuples of elements of $\fg$.
We show that the property of being Diophantine for a connected $s$-step nilpotent Lie group $G$ is intimately related to the way the  ideal of laws on $k$ letters $\cL_{k,s}(\fg)$ of the Lie algebra $\fg$ of $G$ sits in the free $s$-step nilpotent Lie algebra $\cF_{k,s}$ on $k$ generators. Although $\cF_{k,s}$ is naturally defined over $\QQ$, the ideal of laws $\cL_{k,s}(\fg)$ may not be and we have a natural epimorphism of real Lie algebras:

$$\Lambda: \cF_{k,s} / \cL_{k,s}(\fg)_{\QQ} \to \cF_{k,s} / \cL_{k,s}(\fg)$$
where $\cL_{k,s}(\fg)_{\QQ}$ is the real vector space spanned by $\cL_{k,s}(\fg) \cap\cF_{k,s}(\QQ)$. It is the Diophantine properties of the kernel of $\Lambda$ viewed as a real subspace of $\cF_{k,s} / \cL_{k,s}(\fg)_{\QQ}$ which determines the Diophantine properties of $G$ as a Lie group. In brief, a real subspace of a real vector space defined over $\QQ$ is said to be Diophantine if integer points outside it cannot be too close to it (see Definition \ref{diophantine-subspace} below). It turns out that if $s\leq 5$, then $\cL_{k,s}(\fg)$ is always defined over $\QQ$ regardless of $\fg$, and hence Diophantine. Recall that there is a continuous family of non-isomorphic $2$-step nilpotent Lie algebras, so $\fg$ may not be defined over $\QQ$ or over a number field and still give rise to a Diophantine Lie group. Similarly if $G$ is metabelian (i.e. if $\cL_{k,s}$ contains the ideal generated by all double commutators $[[x,y],[z,w]]$), then  $\cL_{k,s}(\fg)$ is again defined over $\QQ$, regardless of $s$ and of $\fg$ (see Theorem \ref{metabelian} below). This follows from a study (made in the appendix) of the multiplicities of the irreducible $SL_k$-submodules of $\cL_{k,s}(\fg)$, where $SL_k$ acts by substitution of the variables.

\bigskip

We now summarize the above discussion and state our results. See Section~\ref{sec:characterization} for precise definitions and notation. The nilpotent real Lie groups considered in this paper will always be endowed with a left-invariant Riemannian metric.

\begin{thm}\label{characterizationi}
Let $G$ be a connected nilpotent Lie group with Lie algebra $\fg$. For any integer $k\geq 1$, the following are equivalent.
\begin{enumerate}
\item $G$ is Diophantine for $k$-tuples.
 \item The ideal of laws on $k$ letters of $\fg$, $\cL_{k,s}(\fg)$, is a Diophantine subspace of the free $s$-step nilpotent Lie algebra on $k$ generators $\cF_{k,s}$.
\end{enumerate}
\end{thm}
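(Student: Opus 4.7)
The plan is to use exponential coordinates and the Baker--Campbell--Hausdorff formula to translate the Diophantine property of $\Gamma=\langle g_1,\ldots,g_k\rangle$ into a Diophantine property of the subspace $\cL_{k,s}(\fg)\subset\cF_{k,s}$. Fix a Hall basis of $\cF_{k,s}$. To each word $w$ in $k$ letters the iterated BCH formula attaches a Lie polynomial $P_w\in\cF_{k,s}$, with rational coefficients of bounded denominator, such that $w(g)=\exp\bigl(\pi_g(P_w)\bigr)$ for $g=(g_1,\ldots,g_k)$ in a small neighbourhood of $1$ in $G^k$, where $\pi_g:\cF_{k,s}\to\fg$ denotes the unique Lie algebra homomorphism sending the $i$-th free generator to $\log g_i$. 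Standard estimates yield $\|P_w\|\lesssim \ell(w)^{C}$; conversely, by Mal'cev coordinates for the free $s$-step nilpotent group, every integer point of norm $\leq T$ in $\cF_{k,s}$ arises as $P_w$ for some $w$ of length $\lesssim T^{1/C'}$.

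The kernel of $\pi_g$ always contains $\cL_{k,s}(\fg)$ by definition. For Haar-a.e.\ $g$, equality holds: the set of $g$ for which a fixed non-law $v\in\cF_{k,s}$ lies in $\ker\pi_g$ is a proper real-algebraic subvariety of $G^k$, and $\cF_{k,s}$ is countable-dimensional over $\QQ$. Hence $\pi_g$ descends to an injective linear map $\overline{\pi}_g:\cF_{k,s}/\cL_{k,s}(\fg)\to\fg$, bi-Lipschitz onto its image with constants finite for a.e.\ $g$ and uniform on a compact set of positive Haar measure. Choosing $g_i$ close enough to $1$ (one reduces to this case by replacing $g_i$ with a large power, losing only a polynomial factor in $n$), we obtain
\[
d\bigl(w(g),1\bigr)\;\asymp\;\|\pi_g(P_w)\|\;\asymp\;d_{\cF_{k,s}}\bigl(P_w,\cL_{k,s}(\fg)\bigr).
\]

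Combining with Bass--Guivarc'h's polynomial growth estimate $|B_\Gamma(n)|\asymp n^D$, valid for a.e.\ $k$-tuple, the condition $\delta_\Gamma(n)\geq c|B_\Gamma(n)|^{-\beta}$ translates, via the two-sided covering of the integer lattice by $\{P_w\}$, into $d(v,\cL_{k,s}(\fg))\gtrsim\|v\|^{-\alpha}$ for every $v\in\cF_{k,s}(\ZZ)\setminus\cL_{k,s}(\fg)$, with $\alpha$ a definite function of $\beta$, $D$, $C$ and $C'$; this is exactly Definition~\ref{diophantine-subspace}, and reading the argument in both directions gives (1)$\Leftrightarrow$(2). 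The main technical obstacles I expect are (i) the two-sided covering of integer points in $\cF_{k,s}$ by the $P_w$, so that any near-obstruction on integers is witnessed by a word of controlled length, and (ii) extracting bi-Lipschitz constants that are uniform on a set of positive Haar measure, so that $\beta$ can be chosen independently of (almost every) $g$; both are routine via Mal'cev coordinates and Fubini but require care.
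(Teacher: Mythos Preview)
The direction $(1)\Rightarrow(2)$ in your outline is fine and matches the paper's: if $\cL_{k,s}(\fg)$ is not Diophantine, the offending integer points $v_n$ give words $w_n$ with $\|\pi_g(v_n)\|\le C\,d(v_n,\cL_{k,s}(\fg))$ uniformly for $g$ in a compact set (forward Lipschitz constant), and $\pi_g(v_n)\neq 0$ off a null set, so almost every tuple is non-Diophantine.

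The genuine gap is in $(2)\Rightarrow(1)$. Your estimate $d(w(g),1)\asymp d_{\cF_{k,s}}(P_w,\cL_{k,s}(\fg))$ hides a $g$-dependent constant on the lower side, namely the inverse Lipschitz constant of $\overline{\pi}_g$, and this constant is \emph{not} essentially bounded: it is $1/\sigma_{\min}(\overline{\pi}_g)$, which blows up as $g$ approaches the proper subvariety where $\overline{\pi}_g$ loses injectivity (for instance near $g=(1,\ldots,1)$, where $\pi_g=0$). Your Fubini/compactness remark therefore only produces a uniform $\beta$ on a set of \emph{positive} Haar measure, not on a set of full measure, and there is no zero--one law here to upgrade this. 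So from (2) you only get that almost every tuple is $\beta(g)$-Diophantine for some $g$-dependent exponent, which is strictly weaker than (1).

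The paper handles this by a different mechanism. It first rephrases (2) as a lower bound on the \emph{sup norm} $\|\omega\|_B:=\sup_{g\in B}\|\omega(g)\|$ of each nontrivial word map (condition (PUB$_k$)); this equivalence is immediate since $\sup_g\|\pi_g(\cdot)\|$ is a norm on $\cF_{k,s}/\cL_{k,s}(\fg)$. The passage (PUB$_k$) $\Rightarrow$ (1) then avoids any pointwise lower bound and instead uses a Remez-type sublevel estimate: word maps are polynomials of degree $\le s$, so $|\{g\in B:\|\omega(g)\|\le\epsilon\}|\ll(\epsilon/\|\omega\|_B)^{1/s}$. Summing over all word maps and applying Borel--Cantelli gives a fixed $\beta$ that works for almost every $g$. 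This measure-theoretic step is the missing ingredient in your plan; it is not a routine bookkeeping issue but the actual content of the implication.
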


It may seem surprising at first glance that this criterion for diophantinity is given only in terms of Lie algebra laws as opposed to group laws. However, as is well-known, nilpotent Lie groups are best analyzed through their Lie algebra and there is a simple relation between word maps in $G$ and bracket maps in $\fg$ (see Lemma \ref{lawtoword} below).

The proof of Theorem \ref{characterizationi} relies on a Borel-Cantelli argument (analogous to the classical one showing that almost every real number is Diophantine and to the one used by Kaloshin-Rodnianski \cite{KR}) and sub-level sets estimates for polynomial maps, originating in the work of Remez (see Theorem \ref{prop:estimates on Polynomial maps}) and related to the $(C,\alpha)$-good functions of Kleinbock and Margulis \cite{kleinbock-margulis}.

This criterion immediately settles the case of nilpotent Lie groups whose Lie algebra has rational (or even algebraic) structure constants:

\begin{cor}\label{rationali}
If $G$ is a connected rational nilpotent Lie group, then it is Diophantine.
\end{cor}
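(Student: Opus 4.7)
The plan is to invoke Theorem~\ref{characterizationi}: it suffices to prove that for every $k\geq 1$, the ideal of laws $\cL_{k,s}(\fg)$ is a Diophantine subspace of $\cF_{k,s}$. I will show the stronger statement that $\cL_{k,s}(\fg)$ is in fact defined over $\QQ$ whenever $\fg$ is rational, which makes it automatically Diophantine.

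First, fix a $\QQ$-basis $(e_1,\dots,e_d)$ of $\fg$ in which the structure constants $c_{ij}^\ell$ are rational. A $k$-tuple of elements of $\fg$ is described by $kd$ real coordinates $x_{i,j}$. For any bracket word $w\in \cF_{k,s}$, the value $w(x_1,\dots,x_k)$ is, in the basis $(e_\ell)$, a polynomial expression in the $x_{i,j}$ whose coefficients depend linearly on the coordinates of $w$ (in a chosen $\QQ$-basis of $\cF_{k,s}$) and polynomially with rational coefficients on the $c_{ij}^\ell$. Hence $w\in \cL_{k,s}(\fg)$ iff all these polynomial expressions vanish identically in the $x_{i,j}$; equating each monomial coefficient to zero gives a system of $\QQ$-linear equations on the coordinates of $w$. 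Thus $\cL_{k,s}(\fg)$ is the real locus of a $\QQ$-linear system of equations, i.e.\ it is defined over $\QQ$.

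Second, observe that any $\QQ$-rational subspace $V$ of a $\QQ$-rational Euclidean space is automatically Diophantine. Indeed, the orthogonal complement $V^\perp$ is again $\QQ$-rational, so the orthogonal projection $\pi\colon\cF_{k,s}\to V^\perp$ has a rational matrix with a uniformly bounded common denominator $D$. For any integer point $p\notin V$, the projection $\pi(p)$ lies in $\frac{1}{D}\ZZ^N\cap V^\perp$ and is nonzero, hence its norm is bounded below by a positive constant independent of $p$. This yields $\mathrm{dist}(p,V)\geq c>0$ for every integer point $p\notin V$, which is much stronger than any polynomial lower bound.

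Combining these two observations with Theorem~\ref{characterizationi}, we conclude that $G$ is $\beta$-Diophantine for $k$-tuples for every $k\geq 1$, so $G$ is Diophantine. There is essentially no obstacle here once Theorem~\ref{characterizationi} is granted: the corollary is just the translation through the criterion of the elementary fact that rational structure constants force the ideal of laws to be $\QQ$-rational.
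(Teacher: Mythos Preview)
Your proof is correct and follows essentially the same route as the paper: the paper observes (Section~3.4) that if $\fg$ is defined over $K$ then $\cL_{k,s}(\fg)$ is also defined over $K$, invokes Example~\ref{algebraic} to conclude it is Diophantine, and then applies Theorem~\ref{diophantinelaws} (equivalently Theorem~\ref{characterizationi}). You have simply spelled out the first step in coordinates and given a direct orthogonal-projection argument for the special case $K=\QQ$ of Example~\ref{algebraic}, both of which the paper leaves to the reader.
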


Combining Theorem~\ref{characterizationi} with a study of the structure of fully invariant ideals of the free Lie algebra on $k$ letters, we are also able to construct examples of non-Diophantine nilpotent Lie groups, and to show that such groups must have nilpotency step at least $6$:

\begin{thm}\label{nondiophantinei}
Fix an integer $k\geq 3$ (resp. $k=2$). Then, all connected nilpotent Lie groups of step $s\leq 5$ (resp. $s\leq 6$) are Diophantine for $k$-tuples, but for all integers $s>5$ (resp. $s>6$), there exists a connected nilpotent Lie group of step $s$ that is not Diophantine for $k$-tuples.
\end{thm}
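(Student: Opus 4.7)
The plan is to apply the characterization of Theorem \ref{characterizationi}, which for each connected nilpotent $G$ reduces the question to whether the fully invariant ideal $\cL_{k,s}(\fg)\subset\cF_{k,s}$ is a Diophantine subspace. I will treat the positive and negative assertions separately, using that any rational subspace is automatically Diophantine, whereas a real subspace approximated too well by rational ones fails the Diophantine condition of Definition \ref{diophantine-subspace}.

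For the positive statement, the strategy is to prove that under the given step restrictions---$s\leq 5$ when $k\geq 3$, and $s\leq 6$ when $k=2$---the ideal $\cL_{k,s}(\fg)$ is automatically defined over $\QQ$, regardless of $\fg$. The key observation is that $\cL_{k,s}(\fg)$ is a fully invariant ideal, i.e.\ stable under every endomorphism of $\cF_{k,s}$, hence in particular under the substitution action of $GL_k$. It therefore decomposes as a sum of whole $GL_k$-isotypic components of $\cF_{k,s}$. If each such isotypic component happens to be irreducible---that is, if the decomposition is multiplicity-free---then every fully invariant submodule is a direct sum of Schur functors, each of which is rational, so $\cL_{k,s}(\fg)$ is rational and hence Diophantine. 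The main technical step is thus the multiplicity computation, to be carried out in the appendix by explicit character/Witt-formula arguments, which confirms that multiplicity-freeness holds precisely in the stated ranges.

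For the negative statement, the idea is to exploit the failure of multiplicity-freeness beyond those ranges. As soon as $s>5$ (resp.\ $s>6$ for $k=2$), some isotypic component of $\cF_{k,s}$ under the $GL_k$-action contains at least two copies of an irreducible $V$. Inside the Grassmannian parametrizing $GL_k$-subrepresentations of that isotypic component of a fixed dimension one has a continuous family of real subspaces, only countably many of which are rational. A Liouville-type construction---formally identical to the one producing badly approximable real numbers---lets us pick a subspace $W$ whose distance to the rational points of the Grassmannian decays faster than any inverse power of the height, so that $W$ violates the Diophantine condition. Setting $\cL := W \oplus (\text{other fully invariant components})$, one then checks that $\cL$ really is a fully invariant ideal of $\cF_{k,s}$, and the Lie algebra $\fg := \cF_{k,s}/\cL$ provides the desired non-Diophantine $G$. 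Examples at any larger step $s'>s$ follow by direct-summing with a free nilpotent Lie algebra of step $s'$, or by repeating the Liouville construction in the additional graded layers.

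The main obstacle, and the technical heart of the argument, is the representation-theoretic bookkeeping supporting both halves: on the positive side, establishing multiplicity-freeness throughout the stated ranges requires an explicit decomposition of $\cF_{k,s}$ into Schur functors up through step $5$ or $6$; on the negative side, one must exhibit explicit multiplicity-two isotypic components just past those ranges and verify that the Liouville-perturbed $W$ still assembles into an \emph{ideal} stable under every Lie-algebra endomorphism, not merely a $GL_k$-subspace. Once this bookkeeping is in place, the transition between the Diophantine property of $\cL$ and that of random $k$-tuples in $G$ is delivered immediately by Theorem \ref{characterizationi}.
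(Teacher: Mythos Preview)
Your proposal is correct and follows essentially the same route as the paper: reduce via Theorem~\ref{characterizationi} to the Diophantine property of the fully invariant ideal $\cL_{k,s}(\fg)$, invoke the multiplicity-free theorem (Theorem~\ref{multfree}) to force rationality of every such ideal in the small-step range, and in the large-step range run a Liouville construction inside a multiplicity-$\geq 2$ isotypic component to produce a non-Diophantine fully invariant ideal. One minor simplification the paper makes in the negative part: since any $SL_k$-submodule of the \emph{top} homogeneous piece $\cF_k^{[s]}$ is automatically a fully invariant ideal of $\cF_{k,s}$, the Liouville-perturbed submodule can be taken as $\cL$ directly, with no need to adjoin other components or verify the ideal/endomorphism-stability condition separately.
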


The numbers $s=6$ and $s=5$ appear here for the following reason: the free $s$-step nilpotent Lie algebra on $k$ letters  $\cF_{k,s}$ admits multiplicity as an $SL_k$-module if and only if $s \geq 6$ when $k \geq 3$ and if and only if $s \geq 7$ when $k=2$. See Theorem \ref{multfree}. The existence of multiplicity allows to construct fully invariant ideals of $\cF_{k,s}$ which are not Diophantine, hence giving rise to non-Diophantine Lie groups via Theorem~\ref{characterizationi}.

The decomposition of $\cF_{k,s}$ into irreducible $SL_k$-modules has been very much studied in the literature on free Lie algebras starting with Witt \cite{witt} and Klyachko \cite{klyachko}. See  \cite{reutenauer} for a book treatment. We survey some of these results in the appendix. In particular, we recall a relatively recent formula due to Kraskiewicz and Weyman \cite{kraskiewiczweyman}, which allows one to compute the multiplicity of any given irreducible submodule of $\cF_{k,s}$ in terms of its Young diagram. Together with Theorem \ref{characterizationi} this allows us to show the following:

\begin{thm}\label{everykk}
For every integer $k \geq 1$, there exists a connected (nilpotent) Lie group, which is Diophantine for $k$-tuples, but not for $(k+1)$-tuples.
\end{thm}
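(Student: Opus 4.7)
The plan is to use Theorem \ref{characterizationi} and reduce to constructing, for each $k\geq 1$, a nilpotent real Lie algebra $\fg$ of some step $s$ whose ideal of laws on $k+1$ letters fails to be Diophantine in $\cF_{k+1,s}$, while the ideal of laws on $k$ letters remains Diophantine in $\cF_{k,s}$. Identifying $\cF_{k,s}$ with the subalgebra of $\cF_{k+1,s}$ generated by the first $k$ generators, one has $\cL_{k,s}(\fg)=\cL_{k+1,s}(\fg)\cap\cF_{k,s}$, so the task is to place the non-Diophantine part of the laws on $k+1$ letters outside of this subalgebra.

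The representation-theoretic input I would use is this: if $\lambda$ is a partition with exactly $k+1$ nonempty rows, then every weight $\mu$ of the irreducible $GL_{k+1}$-module $V_\lambda$ has $\mu_{k+1}\geq 1$, since each semi-standard tableau of shape $\lambda$ must use each of $1,\dots,k+1$ at least once in its strictly increasing first column. Consequently the $V_\lambda$-isotypic component of $\cF_{k+1,s}$ is disjoint from $\cF_{k,s}$, which is precisely the subspace of weight $\mu_{k+1}=0$. I would then invoke the Kraskiewicz--Weyman formula to locate, for each $k\geq 1$, such a partition $\lambda$ with multiplicity $m_\lambda\geq 2$ in $\cF_{k+1,|\lambda|}$, and inside the multiplicity space $\RR^{m_\lambda}$ choose a real line $W$ spanned by a vector with a Liouville coordinate, so that $I:=V_\lambda\otimes W$ is a non-Diophantine subspace of $\cF_{k+1,s}$; this is the same mechanism used to build the non-Diophantine examples of Theorem \ref{nondiophantinei}.

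To close the argument one verifies that $I$ is a fully invariant Lie ideal, so that $I=\cL_{k+1,s}(\fg)$ for $\fg:=\cF_{k+1,s}/I$. Since $\lambda\vdash s$, the subspace $I$ lies in the top-graded component $\cF_{k+1,s}^{(s)}$, which is central in $\cF_{k+1,s}$ (all brackets with other elements land in degree $>s$ and hence vanish in the $s$-step quotient), so $I$ is automatically a Lie ideal; moreover, any endomorphism of $\cF_{k+1,s}$ acts on this top component only through its linear part, since higher-order contributions to $\phi(x_i)$ raise the total degree and vanish in the $s$-step quotient, so the $GL_{k+1}$-invariance of the isotypic component forces $I$ to be fully invariant. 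The weight observation then gives $\cL_{k,s}(\fg)=I\cap\cF_{k,s}=0$, which is trivially Diophantine, and Theorem \ref{characterizationi} yields the desired Lie group $G$.

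The main obstacle I anticipate is the combinatorial existence step: producing, for every $k\geq 1$, a partition with exactly $k+1$ rows having multiplicity at least $2$ in $\cF_{k+1,s}$ for some $s$. For small $k$, concrete candidates can be exhibited directly (in $\cF_{2,7}$ for $k=1$ and in $\cF_{3,6}$ for $k=2$, where multiplicity first appears according to Theorem~\ref{multfree}). For general $k$, my approach would be to use Kraskiewicz--Weyman to show that the multiplicities of $(k+1)$-row partitions in $\cF_{k+1,\,\cdot}$ grow without bound as the size increases, so that some such multiplicity necessarily exceeds $1$.
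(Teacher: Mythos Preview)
Your approach is essentially the paper's approach: build $\fg=\cF_{k+1,s}/I$ where $I\subset\cF_{k+1}^{[s]}$ is a Liouville-twisted copy of an irreducible $SL_{k+1}$-module $E^\lambda$ occurring with multiplicity, with $\lambda$ having exactly $k+1$ rows so that the weight argument forces $I\cap\cF_{k,s}=0$. Your identification $\cL_{k,s}(\fg)=\cL_{k+1,s}(\fg)\cap\cF_{k,s}$ and your observation that endomorphisms act on the top graded piece through their linear part are both correct and are what the paper uses (implicitly for the latter).

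The only substantive gap is the one you flag yourself: the existence, for every $k\geq 1$, of a partition with exactly $k+1$ rows occurring with multiplicity $\geq 2$. The paper does \emph{not} argue asymptotically as you propose; it gives an explicit uniform choice. For $k\geq 3$ it takes $s=k+3$ and $\lambda=(2,2,1,\dots,1)$ (with $k+1$ rows), and verifies multiplicity $\geq 2$ directly from the Kraskiewicz--Weyman formula by exhibiting two standard tableaux of the conjugate shape $(s-2,2)$ with major index in the required residue class modulo $s$ (this is Corollary~\ref{twotwoone} in the appendix). The cases $k=1,2$ are handled separately via Theorem~\ref{nondiophantine}, exactly as you suggest. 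So your outline is right, and the missing piece is filled by a short explicit computation rather than a growth argument.
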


We also give two other applications of Theorem~\ref{characterizationi}. The first one concerns connected nilpotent Lie groups that are metabelian, i.e. solvable of derived length $2$:

\begin{thm}\label{metabeliani}
Every metabelian connected nilpotent Lie group is Diophantine.
\end{thm}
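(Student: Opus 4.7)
The plan is to deduce Theorem \ref{metabeliani} directly from the characterization of Diophantine nilpotent Lie groups given in Theorem \ref{characterizationi}, combined with the multiplicity structure of the free metabelian nilpotent Lie algebra as an $SL_k$-module. The first step is the following reduction: by Theorem \ref{characterizationi}, it suffices to show that for every integer $k \geq 1$, the ideal of laws $\cL_{k,s}(\fg)$ is a Diophantine subspace of $\cF_{k,s}$, where $s$ is the nilpotency step of $\fg$. Since $\fg$ is metabelian, $\cL_{k,s}(\fg)$ contains the ideal $\cI_{k,s} \subset \cF_{k,s}$ generated by the double commutators $[[x,y],[z,w]]$. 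The quotient $\cF_{k,s}/\cI_{k,s}$ is the free $s$-step nilpotent metabelian Lie algebra on $k$ generators, which is naturally defined over $\QQ$.

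The next step reduces the Diophantine assertion to a rationality statement. I would argue that any fully invariant ideal of $\cF_{k,s}$ is in particular stable under the substitution action of $SL_k$ on the $k$ generators. Therefore $\cL_{k,s}(\fg)/\cI_{k,s}$ is an $SL_k$-submodule of $\cF_{k,s}/\cI_{k,s}$. Here I invoke Theorem \ref{metabelian} (proved in the appendix via the study of multiplicities of irreducible $SL_k$-submodules of $\cF_{k,s}$): the free metabelian $s$-step nilpotent Lie algebra on $k$ letters is \emph{multiplicity-free} as an $SL_k$-module. Since every irreducible rational representation of $SL_k$ is defined over $\QQ$, each isotypic component of a multiplicity-free $\QQ$-rational representation is itself defined over $\QQ$, and hence so is every $SL_k$-invariant subspace (it is necessarily a sum of isotypic components). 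Applied to $\cL_{k,s}(\fg)/\cI_{k,s}$, this yields that $\cL_{k,s}(\fg)/\cI_{k,s}$ is defined over $\QQ$ in $\cF_{k,s}/\cI_{k,s}$.

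To conclude, since $\cI_{k,s}$ is itself spanned by $\QQ$-rational vectors (its generators have integer coefficients in the standard basis of $\cF_{k,s}$), the preimage $\cL_{k,s}(\fg)$ is likewise a $\QQ$-rational subspace of $\cF_{k,s}$. A rational subspace of a $\QQ$-vector space is automatically Diophantine in the sense of Definition \ref{diophantine-subspace}, because integer points lying outside a rational subspace are bounded below in distance by a fixed positive constant times a negative power of their norm (indeed, by a positive constant depending only on the denominators of defining equations, up to the easier zero-versus-nonzero distinction). Applying the implication (2)$\Rightarrow$(1) of Theorem \ref{characterizationi}, we conclude that $G$ is Diophantine for $k$-tuples, for every $k$, and hence Diophantine.

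The main obstacle in this strategy is not in the plan itself — which is a short chain of reductions — but is entirely absorbed into the appendix, namely the proof that the free metabelian nilpotent Lie algebra on $k$ letters is multiplicity-free as an $SL_k$-module (Theorem \ref{metabelian}). Once that structural result is in hand, the Diophantine conclusion follows formally, because rationality of the ideal of laws trivially implies the Diophantine property.
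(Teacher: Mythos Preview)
Your proposal is correct and follows essentially the same route as the paper: show that the ideal of laws $\cL_{k,s}(\fg)$ is defined over $\QQ$ by exploiting the multiplicity-free structure of the free metabelian Lie algebra as an $SL_k$-module, then conclude via the characterization theorem. One labeling slip: what you cite as ``Theorem \ref{metabelian} (proved in the appendix)'' is in fact the theorem you are proving; the appendix input you need is Lemma \ref{irreducible} (and its corollary), which shows each homogeneous piece $\cF_k^{[r]}/\cM_k^{[r]}\simeq E^{(r-1,1)}$ is irreducible --- this is even stronger than multiplicity-free and gives the same rationality conclusion.
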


\noindent Again the point here is the fact that the free metabelian Lie algebra is multiplicity-free as an $SL_k$-module (see Lemma \ref{irreducible}). The second application is a construction of a solvable non-nilpotent non-Diophantine Lie group:

\begin{prop}
There exists a connected solvable non-nilpotent Lie group that is not Diophantine.
\end{prop}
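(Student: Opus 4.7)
The plan is to construct $G$ explicitly as a semidirect product $G = \RR \ltimes N$, where $N$ is a suitable connected, simply connected nilpotent Lie group (for concreteness take $N$ to be a Heisenberg group, giving $G$ derived length $3$) and the $\RR$-action on $N$ is by a one-parameter group of automorphisms whose generating derivation $D$ has a carefully chosen, Liouville-type spectrum. Non-triviality of the action makes $G$ non-nilpotent, while the Liouville choice of parameters will destroy the Diophantine property of $G$.

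The core step is to produce, for each $k \geq 3$, a sequence of words $w_n$ in $k$ letters that behave as almost laws of $G$ in the sense of \S\ref{almostlaws}: they are not laws of $G$, yet $w_n(g_1,\ldots,g_k)$ lies at distance less than $l(w_n)^{-f(n)}$ from the identity on any fixed compact subset of $G^k$, for an arbitrarily fast growing function $f$. These words are built by combining iterated conjugations by the generator $t$ of the $\RR$-factor with Lie-algebra-level commutators in $N$. Unpacking via the Baker--Campbell--Hausdorff formula, $w_n(g_1,\ldots,g_k)$ becomes a polynomial in the coordinates of the $g_i$'s whose coefficients involve expressions of the form $e^{p_n \lambda_i} - e^{q_n \lambda_j}$, where $(p_n, q_n) \in \ZZ^2$ and the $\lambda_i$'s are the eigenvalues of $D$. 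Choosing the spectrum so that some combination $p_n \lambda_i - q_n \lambda_j$ admits Liouville-quality rational approximation makes these coefficients super-polynomially small in $\max(|p_n|, |q_n|)$, hence super-polynomially small in $l(w_n)$, while the polynomial factor is generically nonzero so $w_n$ is not a law of $G$.

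With the almost laws in hand, the argument proceeds in the spirit of the proof of Theorem~\ref{characterizationi}. A Borel--Cantelli argument combined with the sub-level-set estimate of Theorem~\ref{prop:estimates on Polynomial maps} applied to the polynomial word maps $w_n$ shows that for almost every $k$-tuple $(g_1, \ldots, g_k) \in G^k$ one has $d(w_n(g_1,\ldots,g_k), 1) \leq |B_\Gamma(l(w_n))|^{-n}$ for infinitely many $n$. This contradicts $\beta$-Diophantinity of the generated subgroup for any $\beta > 0$, so $G$ is not Diophantine for $k$-tuples, and hence not Diophantine.

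The principal obstacle is that in the non-nilpotent setting the clean ambient object $\cF_{k,s}$ is no longer available, so the almost laws must be produced either inside the free solvable Lie algebra on $k$ generators of derived length $3$, or directly in BCH coordinates. In either case one must simultaneously secure (i) genuine non-triviality of $w_n$ as a function on $G^k$, and (ii) super-polynomial smallness in word length on compact sets. The Liouville assumption on the spectrum of $D$ supplies the parameter freedom needed to balance these two competing constraints, and is the analogue in the solvable non-nilpotent setting of the non-$\QQ$-rationality of $\cL_{k,s}(\fg)$ that drives the non-Diophantine nilpotent examples of Theorem~\ref{nondiophantinei}.
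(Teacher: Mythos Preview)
Your proposal is a plan rather than a proof, and the central step---actually constructing the almost laws $w_n$ on a solvable non-nilpotent $G=\RR\ltimes N$---is never carried out. You write that ``unpacking via BCH, $w_n(g_1,\ldots,g_k)$ becomes a polynomial in the coordinates of the $g_i$'s'', but in a genuine semidirect product conjugation by the $\RR$-factor acts as $e^{tD}$, so word maps are transcendental in the $\RR$-coordinate of a generic tuple; they are not polynomial maps of bounded degree on $G^k$, and Theorem~\ref{prop:estimates on Polynomial maps} does not apply. (Incidentally, Borel--Cantelli plus Remez is the machinery for the \emph{other} direction of Proposition~\ref{diophantinepub}; once you have genuine almost laws with a sup bound, the failure of Diophantinity is immediate.) Even restricting to tuples with prescribed $\RR$-component, you would still need to exhibit specific words and verify that they are simultaneously non-laws and super-polynomially small in word length on compacta; the Liouville-spectrum heuristic is suggestive but you do not show it achieves this balance.

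The paper sidesteps all of these difficulties with a much simpler construction: a \emph{direct} product $G=M\times N$, where $M$ is any metabelian non-nilpotent Lie group (e.g.\ the $ax+b$ group) and $N$ is a non-Diophantine nilpotent group coming from a non-Diophantine $SL_2$-submodule $\cL\subset\cM_2^{[s]}=D^2(\cF_2)^{[s]}$ for some $s\geq 7$, built via a Liouville number as in Lemma~\ref{lem:mult free is dioph}. Because the almost-law elements $\sr_n$ lie in $\cM_2$, Lemma~\ref{lawtoword} (and the remark following it) lets one choose the corresponding words $w_n$ inside $D^2(F_2)$. Such words are identically trivial on any metabelian group, hence on $M$; the word maps $w_{n,G}$ therefore live entirely in the nilpotent factor $N$, where they are super-polynomially small by construction. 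The non-nilpotent factor is present only to make $G$ non-nilpotent and is invisible to the $w_n$, so no analysis of word maps on non-nilpotent groups is required at all.
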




For a nilpotent Lie group with Lie algebra $\fg$ set $d_i=\dim \fg^{(i)}/\fg^{(i+1)}$, where $\fg^{(i)}$ is the $i$-th term of the central descending series. The next statement clarifies what happens to a topologically generic tuple in Diophantine and non Diophantine nilpotent Lie groups.

\begin{thm}\label{nondiophtuples}Let $G$ be a connected $s$-step nilpotent Lie group.
\begin{enumerate}
\item Regardless of whether $G$ is Diophantine or not, if $k>d_1$, then there is a $G_\delta$-dense set of $k$-tuples in $G^k$ which are not Diophantine.
\item If $G$ is non-Diophantine for $k$-tuples, then there is a proper closed algebraic subvariety of $G^k$ outside of which every $k$-tuple is non-Diophantine. In particular there is an open dense subset of $G^k$ made of non-Diophantine tuples.
\end{enumerate}
\end{thm}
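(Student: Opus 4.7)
The plan is to treat the two parts separately. For part (1), write the set of non-Diophantine $k$-tuples as
$$\mathcal{ND}_k \;=\; \bigcap_{\beta,n_0\in\NN,\, c\in\QQ_{>0}} U_{\beta,c,n_0},$$
where $U_{\beta,c,n_0}$ is the set of $(g)\in G^k$ for which there exist $n\geq n_0$ and a word $1\neq w\in B_{F_k}(n)$ with $w(g)\neq 1$ and $|w(g)|<c|B_\Gamma(n)|^{-\beta}$. Continuity of evaluation makes each $U_{\beta,c,n_0}$ open, so by the Baire category theorem it suffices to prove density. The hypothesis $k>d_1$ enters decisively at this point: in the abelianization $A=G/[G,G]\simeq\RR^{d_1}$ the $k$ vectors $\bar g_1,\ldots,\bar g_k$ satisfy nontrivial $\RR$-linear relations, and a classical Khintchine-type construction produces, inside any open subset of $A^k$, tuples whose coordinates admit simultaneous rational approximations of any prescribed speed. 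Lifting these approximations back to $G^k$ is then carried out by iterating Dirichlet approximation up the lower central series of $\fg$: at each graded piece $\fg^{(i)}/\fg^{(i+1)}$ one reads off, via the Baker--Campbell--Hausdorff formula, the next component of $\log w$ as a polynomial in the integer coefficients and the log coordinates of the generators, applies simultaneous Diophantine approximation there, and absorbs the error by a further small perturbation of $(g)$ in directions where the relevant evaluation map is submersive. This yields, inside any open set, a tuple admitting a short nontrivial word of extremely small norm. This iterated construction is the main technical difficulty of the theorem.

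For part (2), by Theorem~\ref{characterizationi} the hypothesis that $G$ is not Diophantine for $k$-tuples is equivalent to $\mathcal{L}:=\mathcal{L}_{k,s}(\fg)$ failing to be a Diophantine subspace of $\mathcal{F}_{k,s}$. Fix therefore a sequence $v_n\in\mathcal{F}_{k,s}(\ZZ)\setminus\mathcal{L}_\QQ$ with $d(v_n,\mathcal{L})\leq|v_n|^{-n}$. Because $\mathcal{L}\cap\mathcal{F}_{k,s}(\ZZ)\subset\mathcal{L}_\QQ$, each $v_n$ lies outside $\mathcal{L}$ and so is not a law of $\fg$. Letting $u_n\in\mathcal{L}$ denote the orthogonal projection of $v_n$ onto $\mathcal{L}$ and $\epsilon_n:=v_n-u_n\in\mathcal{L}^\perp$ with $|\epsilon_n|=d(v_n,\mathcal{L})\leq|v_n|^{-n}$, we have $v_n(X)=\epsilon_n(X)$ for every $X=(X_1,\ldots,X_k)\in\fg^k$ since $u_n$ is a law, and hence $|v_n(X)|\leq C(X)|v_n|^{-n}$ on any compact subset of $\fg^k$.

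To construct the subvariety $V$, consider the unit vectors $\epsilon_n/|\epsilon_n|$ in $\mathcal{L}^\perp$; by compactness a subsequence converges to some unit vector $\bar v\in\mathcal{L}^\perp\setminus\{0\}$. Since $\bar v\notin\mathcal{L}$, it is not a law, and hence defines a nonzero polynomial map $\Phi_{\bar v}:\fg^k\to\fg$. Its zero set $V:=\{X\in\fg^k:\Phi_{\bar v}(X)=0\}$ is a proper Zariski-closed algebraic subvariety of $\fg^k$, and via exponential coordinates we identify it with a proper algebraic subvariety of $G^k$. For $X\notin V$ we have $\Phi_{\bar v}(X)\neq 0$, so $\epsilon_n(X)/|\epsilon_n|\to\Phi_{\bar v}(X)\neq 0$, whence $v_n(X)\neq 0$ for every sufficiently large $n$, while $|v_n(X)|\leq C(X)|v_n|^{-n}$. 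Applying the word--bracket correspondence of Lemma~\ref{lawtoword} produces for each such $n$ a nontrivial word $w_n$ at $(g)=(\exp X)$ of length $\sim|v_n|$ with $|w_n(g)|\leq C(X)|v_n|^{-n}$. Since $|B_{\Gamma_g}(\ell)|$ grows polynomially in $\ell$, for every $\beta>0$ some such $n$ witnesses a failure of $\beta$-Diophantineness. Thus every tuple outside the proper subvariety $V$ is non-Diophantine, and its complement $G^k\setminus V$ is Zariski-open, hence open and dense in the analytic topology.
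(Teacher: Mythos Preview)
Your scheme has two problems, one minor and one serious. The minor one: the sets $U_{\beta,c,n_0}$ as you define them involve $|B_{\Gamma_\bg}(n)|$, which is only lower semi-continuous in $\bg$ (distinct word images stay distinct under perturbation, but coincidences can disappear). This makes openness of $U_{\beta,c,n_0}$ unclear; you should replace $|B_{\Gamma_\bg}(n)|$ by $|B_{F_{k,G}}(n)|$ or by $n^{\tau_k}$, which is independent of $\bg$ and gives the same intersection up to harmless adjustments of $\beta$.

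The serious problem is the density argument. The ``iterated Dirichlet approximation up the lower central series'' together with ``absorbing the error by a further small perturbation of $\bg$'' is not a proof: at each step, perturbing $\bg$ destroys the approximations you have already built at the lower levels, and you give no mechanism to control this. Perhaps something along these lines can be made to work, but it would require real effort, and you acknowledge as much by calling it ``the main technical difficulty.''

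The paper avoids this difficulty entirely. The point is not to make a word \emph{approximately} vanish, but to find a dense set $D_0\subset G^k$ on which some nontrivial word map vanishes \emph{exactly}. Concretely: fix a Lie commutator $c\in\cF_{k,s}^{[s]}$ of top degree $s$ that is not a law of $\fg$. Because $c$ is multilinear and of top degree, whenever $x_k$ lies in the $\QQ$-span of $x_1,\ldots,x_{k-1}$ modulo $\fg^{(2)}$ (a dense condition, since $k>d_1$), there are integers $n_i$ with
$$\sr(\sx_1,\ldots,\sx_k):=n_kc(\sx_1,\ldots,\sx_k)-\sum_{i<k}n_ic(\sx_1,\ldots,\sx_{k-1},\sx_i)$$
vanishing at $(x_1,\ldots,x_k)$; yet $\sr$ is not in $\cL_{k,s}(\fg)$ because its two pieces depend on $\sx_k$ differently. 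Passing to a word via Lemma~\ref{lawtoword} gives a nontrivial word map vanishing at $\bg$. From here a two-line Baire argument (for each $\bg\in D_0$ and each $n$, the open set $B(\bg,e^{-n\,l(\omega_\bg)})\setminus\omega_\bg^{-1}(1)$ has $\bg$ in its closure) finishes the proof. No lifting and no iterated approximation are needed.

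\textbf{Part (2).} Your argument here is correct and is in fact more elementary than the paper's. You extract a limit direction $\bar v\in\cL^\perp\setminus\cL$ of the normalized defects $\epsilon_n/|\epsilon_n|$ and take $V=\{\Phi_{\bar v}=0\}$; for $X\notin V$ the subsequential convergence gives infinitely many $v_n$ with $v_n(X)\neq 0$ and $|v_n(X)|\leq C(X)|v_n|^{-n}$, and Lemma~\ref{lawtoword} converts these to words of the required length. One small remark: you only pass to a convergent subsequence, so state explicitly that it is along this subsequence that you obtain your witnesses; this is enough, since you only need infinitely many.

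The paper instead argues via a minimality trick on connected subgroups of $(\cF_{k,s}/\cL_{k,s}(\fg),*)$: it finds a minimal connected $L$ such that $\rho(F_{k,G})\cap L$ is non-Diophantine in $L$, picks any $\omega$ in the corresponding subgroup, and shows that every $\bg$ with $\omega(\bg)\neq 1$ is non-Diophantine. This buys slightly more: the exceptional variety is the zero set of a single \emph{word} map, rather than of a bracket polynomial that need not be integral. Your route is shorter and perfectly adequate for the statement as given.
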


Finally we study the dependence in $k$, the size of the $k$-tuple and prove the following (see \S \ref{dependence}). 

\begin{thm}\label{concludeth} Let $G$ be a connected $s$-step nilpotent Lie group.
\begin{enumerate}
\item  $G$ is Diophantine if and only if it is Diophantine for $s$-tuples.
\item If $G$ is Diophantine, and $\beta_k$ is the infimum of the $\beta>0$ such that $G$ is $\beta$-Diophantine on $k$-tuples, $\frac{1}{d_s} \leq \liminf \beta_k \leq \limsup \beta_k \leq s$ as $k \to +\infty$.
\end{enumerate}
\end{thm}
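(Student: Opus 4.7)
For part~(1), the implication $(\Rightarrow)$ is tautological; I would prove $(\Leftarrow)$ through Theorem~\ref{characterizationi}, which reduces the problem to showing that if $\cL_{s,s}(\fg)$ is a Diophantine subspace of $\cF_{s,s}$, then $\cL_{k,s}(\fg)$ is Diophantine in $\cF_{k,s}$ for every $k$. When $k\leq s$, the embedding of $\cF_{k,s}$ into $\cF_{s,s}$ as the subalgebra on the first $k$ generators is $\QQ$-rational and identifies $\cL_{k,s}(\fg)$ with $\cL_{s,s}(\fg)\cap\cF_{k,s}$; the Diophantine property plainly survives intersection with a rational subspace. When $k\geq s$, the $s$-step hypothesis makes each multi-homogeneous component of an element of $\cF_{k,s}$ involve at most $s$ distinct letters, so decomposing laws along multi-degree yields
\[
\cL_{k,s}(\fg) \;=\; \sum_{\phi\colon\{1,\ldots,s\}\hookrightarrow\{1,\ldots,k\}}\phi_{*}\bigl(\cL_{s,s}(\fg)\bigr),
\]
which exhibits $\cL_{k,s}(\fg)$ as the image of the Diophantine subspace $\bigoplus_{\phi}\cL_{s,s}(\fg)\subset\bigoplus_{\phi}\cF_{s,s}$ under a $\QQ$-linear surjection. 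A short auxiliary lemma, saying that integer points in the target lift to rational points of bounded denominator in the source, shows Diophantineness is preserved by such images and gives the claim.

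For the upper bound $\limsup\beta_k\leq s$ in part~(2), I would retrace the Borel--Cantelli estimate underlying Theorem~\ref{characterizationi}, keeping track of the polynomial degree. Word maps on an $s$-step nilpotent Lie group are polynomial of degree at most $s$ in exponential coordinates, so Theorem~\ref{prop:estimates on Polynomial maps} bounds the Haar measure of $\{\bg:d(w(\bg),1)\leq\epsilon\}$ by $C\epsilon^{1/s}$ times a factor depending only on the Diophantine exponent of $\cL_{k,s}(\fg)$. Part~(1) controls that exponent uniformly in~$k$. Summing over the $\asymp n^{D_\Gamma(k)}$ words in $B_\Gamma(n)$ and over $n$, Borel--Cantelli gives $\beta_k\leq s+O\bigl(D_\Gamma(k)^{-1}\bigr)$; since $D_\Gamma(k)\to\infty$ with $k$, the bound follows.

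The lower bound $\liminf\beta_k\geq 1/d_s$ comes from Dirichlet pigeonhole applied to the top graded layer $\fg^{(s)}\cong\RR^{d_s}$. For a Haar-generic $k$-tuple, the basic $s$-commutators $c_I=[g_{i_1},\ldots,g_{i_s}]$ generate (modulo the universal Jacobi identities) an abelian subgroup of $\Gamma\cap\exp\fg^{(s)}$ of rank the Witt number $r_s^{(k)}=\tfrac{1}{s}\sum_{d\mid s}\mu(d)k^{s/d}$. Pansu's nilpotent growth estimate $|c_I^{m}|_\Gamma\asymp |m|^{1/s}$ produces $\asymp n^{sr_s^{(k)}}$ such elements in $B_\Gamma(n)$, all lying in a Euclidean ball of radius $O(n^s)$ in $\RR^{d_s}$. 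Pigeonhole then yields a non-trivial element at distance $\lesssim n^{s(1-r_s^{(k)}/d_s)}$ from $1$, so
\[
\beta_k \;\geq\; \frac{s\bigl(r_s^{(k)}/d_s-1\bigr)}{D_\Gamma(k)}.
\]
Using the asymptotics $r_s^{(k)}\sim k^s/s$ and $D_\Gamma(k)\sim k^s$ as $k\to\infty$, the right-hand side converges to $1/d_s$.

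The main obstacle I anticipate is the quantitative form of the auxiliary "$\QQ$-linear operations preserve Diophantineness" step in part~(1): the Diophantine exponent of $\cL_{k,s}(\fg)$ must be bounded uniformly in $k$ by that of $\cL_{s,s}(\fg)$, since this uniformity is exactly what powers the upper bound in part~(2). A secondary issue is ensuring that the abstract rank of $\Gamma^{(s)}$ indeed grows like $r_s^{(k)}$ for a Haar-generic tuple when $\fg$ has non-trivial universal laws in top degree, so that the central Dirichlet pigeonhole remains effective.
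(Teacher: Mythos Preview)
Your outline for the upper and lower bounds in part~(2) matches the paper's arguments closely. The genuine gap is in part~(1), precisely at the point you yourself flag as the main obstacle.

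\medskip

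\textbf{The auxiliary lemma is false.} You propose to write
\[
\cL_{k,s}(\fg)=\sum_{\phi}\phi_{*}\bigl(\cL_{s,s}(\fg)\bigr)
\]
and then argue that a Diophantine subspace of $\bigoplus_{\phi}\cF_{s,s}$ is sent to a Diophantine subspace of $\cF_{k,s}$ by the $\QQ$-linear map $\Pi:(r_{\phi})_{\phi}\mapsto\sum_{\phi}\phi_{*}(r_{\phi})$, on the grounds that integer points lift with bounded denominator. But lifting integer points is not enough: if $v$ is such a lift of $w$, one controls $d(v,\bigoplus_{\phi}\cL_{s,s})$ from below, whereas what is needed is a lower bound on $d(v,\bigoplus_{\phi}\cL_{s,s}+\ker\Pi)$, and adding a rational subspace can destroy Diophantineness. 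A concrete counterexample: in $\RR^{3}$ take the line $L=\RR\cdot(1,\alpha,\sqrt{2})$ with $\alpha$ Liouville. Computing the cross product, one finds $d\bigl((p,q,r),L\bigr)\asymp\max\bigl(|q\sqrt{2}-r\alpha|,|r-p\sqrt{2}|,|p\alpha-q|\bigr)$, and the middle term alone gives $d\bigl((p,q,r),L\bigr)\gg\|(p,q,r)\|^{-1}$ since $\sqrt{2}$ is badly approximable. So $L$ is Diophantine. Yet under the rational projection $\pi(x,y,z)=(x,y)$ one has $\pi(L)=\RR\cdot(1,\alpha)$, which is not Diophantine. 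Thus ``$\QQ$-linear images preserve Diophantineness'' fails in general, and the quantitative uniformity in $k$ that you need for the upper bound in part~(2) cannot be extracted from this route.

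\medskip

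\textbf{What works instead.} The paper avoids images altogether and uses a \emph{direct sum} decomposition that $\cL_{k,s}(\fg)$ respects. For each subset $B\subset\{1,\dots,k\}$ with $|B|\leq s$, let $V_{B}\subset\cF_{k,s}$ be the span of multi-homogeneous brackets whose set of occurring letters is exactly $B$. These $V_{B}$ are $\QQ$-subspaces with $\cF_{k,s}=\bigoplus_{B}V_{B}$, and since $\cL_{k,s}(\fg)$ is torus-invariant it decomposes as $\cL_{k,s}(\fg)=\bigoplus_{B}\bigl(\cL_{k,s}(\fg)\cap V_{B}\bigr)$. For each $B$ of size $s$, $\bigoplus_{B'\subset B}V_{B'}\simeq\cF_{s,s}$ and the corresponding piece of $\cL_{k,s}(\fg)$ is a copy of $\cL_{s,s}(\fg)$. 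The relevant (and easy) lemma is: if $V=\bigoplus_{i}V_{i}$ over $\QQ$ and $L=\bigoplus_{i}(L\cap V_{i})$, then $L$ is Diophantine in $V$ if and only if each $L\cap V_{i}$ is Diophantine in $V_{i}$, \emph{with the same exponent}. This immediately gives both part~(1) and the crucial uniform bound $A_{k}\leq A_{s}$ for all $k\geq s$, which is exactly what feeds into your Borel--Cantelli argument for $\limsup\beta_{k}\leq s$.

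\medskip

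Your secondary concern about the lower bound is also on point: the number of independent top-degree commutators in $\Gamma_{\bg}$ for generic $\bg$ is not the Witt number $r_{s}^{(k)}=\dim\cF_{k}^{[s]}$ but rather $e_{s}=\dim\cF_{k}^{[s]}/\cL_{k,s}^{[s]}(\fg)_{\QQ}$. The paper shows $e_{s}$ is bounded between two degree-$s$ polynomials in $k$ (for $k\geq s$) by observing that $\cL_{k,s}^{[s]}(\fg)_{\QQ}$ is an $SL_{k}$-submodule and invoking the Weyl dimension formula for the irreducibles of $\cF_{k}^{[s]}$; then the Bass--Guivarc'h formula gives $se_{s}/\tau_{k}\to 1$, and the pigeonhole runs as you describe.
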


The lower bound on $\liminf \beta_k$ follows from a simple pigeonhole argument (analogous to Dirichlet's principle in Diophantine approximation), while the upper-bound on $\limsup \beta_k$ relies on Theorem \ref{characterizationi} and the sharp Remez-type inequality derived in Theorem \ref{prop:estimates on Polynomial maps} together with a study of $\cL_{k,s}(\fg)$ as $k$ grows.

\begin{remark}From this theorem, we see that the exponent $\beta$ seems to carry some interesting information on the group $G$. It is reassuring that it does not degenerate to either $0$ or $+\infty$ as $k$ grows. More importantly it makes much sense to try to determine the critical $\beta>0$ below which almost no $k$-tuple is Diophantine and above which almost all are. The work of Kleinbock and Margulis \cite{kleinbock-margulis} is much relevant here. For example in the case of the Heisenberg groups, a simple application of \cite{kleinbock-margulis} yields the exact value of critical $\beta$ for each $k$. We plan to tackle this problem in a future paper.
\end{remark}

\bigskip

The paper is organized as follows. After giving the general setting of our work and some preliminary lemmas in Section~\ref{sec:preliminaries}, we prove Theorem~\ref{characterizationi} in Section~\ref{sec:characterization}. Section~\ref{sec:submodules} is devoted to the study of fully invariant ideals of the free Lie algebra and to the proof of Theorem~\ref{nondiophantinei}. This section makes use of some computations on the free Lie algebra given in the appendix. The last section contains the proofs of Theorems \ref{everykk} \ref{nondiophtuples} and \ref{concludeth} and further remarks. 

\bigskip

\noindent \emph{Acknowledgements.} We would like to thank D. Segal and Y. de Cornulier for providing useful references about group varieties. We are also grateful to Y. Benoist, A. Gamburd,  A. Gorodnik, N. Monod and P. Varj\'u for interesting discussions regarding various issues related to this paper. The referee's comments and careful reading were also of great help and we are happy to thank him or her. E.B. and N.S. acknowledge partial support from the European Research Council (ERC) Grant GADA 208091, ERC AdG Grant 267259, ANR-12-BS01-0011 and ANR-13-BS01-0006-01, while M.A. acknowledges the support of ISEF and Advanced Research Grant 228304 from the ERC. L.R. was supported by the G\"oran Gustafsson Foundation (KVA).

\section{Preliminaries}
\label{sec:preliminaries}

\subsection{Nilpotent Lie groups}

We briefly review some elementary theory of nilpotent Lie groups.
Proofs of all the results mentioned below may be found in \cite[Chapter 1]{CG90}.
Let $\fg$ be a Lie algebra over $\bR$. The \emph{descending central series}
of $\fg$ is defined inductively by
\[
\fg^{(1)}:=\fg,\fg^{(l+1)}:=[\fg,\fg^{(l)}]=\Span_\mathbb R\{[X,Y]: X\in\fg,Y\in\fg^{(l)}\}
\]
The algebra $\fg$ is said to be \emph{nilpotent }if $\fg^{(s+1)}=0$
for some $s\in\mathbb N$. If $s$ is the smallest integer such that, $\fg^{(s+1)}=0$, then $\fg$ is
said to be \emph{nilpotent of step (or class) $s$}.

A connected Lie group $G$ is called nilpotent if its Lie algebra is nilpotent, or equivalently, if it is nilpotent as a group. We
have the following basic result
(see \cite[Theorem 1.2.1]{CG90}).
\begin{prop}
\label{prop:exp Sur}Let $G$ be a connected simply connected nilpotent
Lie group with Lie algebra $\fg.$ Then, the exponential map $exp:\fg\ra G$
is a diffeomorphism.
\end{prop}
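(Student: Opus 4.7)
The plan is to prove this classical result by induction on the nilpotency step $s$, using the Baker--Campbell--Hausdorff (BCH) formula, which terminates in a nilpotent Lie algebra and is therefore a polynomial map.

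First I would dispose of the base case $s=1$: here $\fg$ is abelian and $G$ is a simply connected abelian Lie group, so $G\cong \fg$ via $\exp$. For the inductive step, pick a one-dimensional central subalgebra $\mathfrak{z}\subseteq Z(\fg)$; then $Z:=\exp(\mathfrak{z})$ is a closed connected central subgroup of $G$ (closedness uses that $Z$ is a continuous homomorphic image of $\RR$ into the simply connected $G$, together with centrality to avoid winding). The quotient $G/Z$ is simply connected nilpotent of step $\leq s-1$ with Lie algebra $\fg/\mathfrak{z}$. By the induction hypothesis, $\exp_{G/Z}:\fg/\mathfrak{z}\to G/Z$ is a diffeomorphism.

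Next I would exploit the BCH formula. Since $\fg$ is $s$-step nilpotent, the series $X * Y = X + Y + \tfrac12 [X,Y] + \cdots$ is a finite sum, hence a polynomial map $\fg\times\fg\to\fg$. In particular, for any $X\in\fg$ and $Y\in\mathfrak{z}$, centrality gives $\exp(X)\exp(Y)=\exp(X+Y)$. Choose a linear splitting $\fg = \fg/\mathfrak{z}\oplus\mathfrak{z}$ (as vector spaces) and consider the map $\Phi:\fg\to G$, $\Phi(\bar X + Y) = \exp(\tilde X)\exp(Y)$ where $\tilde X$ is a lift of $\bar X$; composing with $G\to G/Z$ recovers $\exp_{G/Z}$ on the first factor. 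A straightforward fiberwise argument using the induction hypothesis and the formula above shows that $\exp:\fg\to G$ is both smooth and bijective, and that its differential $d\exp_X$ is everywhere nonsingular (alternatively: compute $d\exp_X$ via the classical formula $\frac{1-e^{-\operatorname{ad} X}}{\operatorname{ad} X}$, which is invertible because $\operatorname{ad} X$ is nilpotent, hence all its nonzero eigenvalues are $0$ and the series evaluates to an invertible operator). By the inverse function theorem, $\exp$ is a diffeomorphism.

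The main obstacle I anticipate is the verification that $\exp$ is a global bijection (as opposed to merely a local diffeomorphism): surjectivity follows from the fact that $\exp(\fg)$ is an open subgroup of the connected group $G$, hence all of $G$; injectivity is the delicate point and is where the inductive setup together with the BCH-based fiber structure $\exp(\tilde X)\exp(Y) = \exp(\tilde X + Y)$ on central fibers is really used, to reduce an equality $\exp(X_1)=\exp(X_2)$ to an equality modulo $\mathfrak{z}$ (handled by induction) and then to an equality inside the abelian central factor. As an alternative route, one can define a group structure $*$ directly on $\fg$ by the (terminating) BCH formula, check the group axioms polynomially, note that $(\fg,*)$ is a connected simply connected Lie group with Lie algebra $\fg$ whose exponential is the identity map, and conclude by uniqueness of simply connected Lie groups with a given Lie algebra that $(\fg,*)\cong G$; this bypasses the injectivity issue but imports the uniqueness theorem as a black box.
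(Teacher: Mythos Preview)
The paper does not actually prove this proposition: it is stated as a standard fact with a reference to \cite[Theorem 1.2.1]{CG90} (Corwin--Greenleaf), and no argument is given in the text. Your proposal supplies a genuine proof sketch where the paper has none, following one of the classical routes (induction on the step via a one-dimensional central quotient, combined with the terminating BCH series and the explicit formula for $d\exp_X$).

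Your outline is essentially correct, and you have identified the one real subtlety yourself: in the inductive step you need $Z=\exp(\mathfrak z)$ to be closed and isomorphic to $\RR$ (not $S^1$), and you need $G/Z$ to be simply connected. Both are true, but the first is mildly delicate because ``$\exp$ is injective on $\mathfrak z$'' is close to what you are proving; one clean way out is to note that a simply connected solvable (hence nilpotent) Lie group has no nontrivial compact subgroups, so the image of $t\mapsto\exp(tX)$ cannot be a circle. The simple connectedness of $G/Z$ then follows from the long exact homotopy sequence of the fibration $Z\to G\to G/Z$ once $Z\cong\RR$. Your alternative route---equip $\fg$ with the BCH product, observe $(\fg,*)$ is a simply connected Lie group with Lie algebra $\fg$ and $\exp=\mathrm{id}$, and invoke uniqueness of the simply connected model---avoids this bookkeeping entirely and is in fact the cleanest way to package the argument; it is also the perspective the paper adopts immediately after the proposition when it identifies $G$ with $(\fg,*)$.
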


As a consequence, every connected simply connected nilpotent
Lie group $G$ can be diffeomorphically identified with its Lie algebra
$\fg$.
The product operation on $G\simeq (\fg,*)$ is then given explicitly by the
Campbell-Baker-Hausdorff formula:\\
For all $X,Y\in\fg$,
$$X*Y=\sum_{n>0}\frac{(-1)^{n+1}}{n}\sum_{\substack{p_i+q_i>0\\1\leq i\leq n}}\frac{(\sum_{1\leq i\leq n} p_i+q_i)^{-1}}{p_1!q_1!\cdots p_n!q_n!}(\ad X)^{p_1}(\ad Y)^{q_1}\dots (\ad X)^{p_n}(\ad Y)^{q_n-1}Y.$$
(If $q_n=0$, the term in the sum ends with $\dots(\ad X)^{p_n-1}X$ instead of $(\ad X)^{p_n}(\ad Y)^{q_n-1}Y$; of course, if $q_n>1$, or if $q_n=0$ and $p_n>1$, then the term is zero.)\\
If $G$ is nilpotent of step $s$,
this sum is finite: only commutators of length $\leq s$ may be nonzero.
For example, for nilpotent groups of step $2$, the Campbell-Baker-Hausdorff formula reads
\[
X*Y=X+Y+\frac{1}{2}[X,Y]
\]
and for nilpotent groups of step 3
\[
X*Y=X+Y+\frac{1}{2}[X,Y]+\frac{1}{12}[X,[X,Y]]+\frac{1}{12}[Y,[Y,X]].
\]
From now on, whenever we are considering a connected simply connected
nilpotent Lie group $G$ we realize it as the algebra $\fg=\Lie(G)$
equipped with the multiplication $*$ given by the Campbell-Baker-Hausdorff formula.
The identity element of $G$ will therefore be denoted by~$0$.
Also, if $\fg$ is a nilpotent Lie algebra, we will denote by $(\fg,*)$ the corresponding connected simply connected nilpotent Lie group.

\subsection{Word maps on nilpotent Lie groups}
\label{subsec: word maps}

Let $F_{k}$ be the free non-abelian group on $k$ letters $x_{1},\dots,x_{k}$.
For any group $G$, any word $w\in F_{k}$ induces a map
\[
\begin{array}{lccc}
w_{G}:& G^{k} & \longrightarrow & G\\
& \bg=(g_{1},\dots,g_{n}) & \longmapsto & w(\bg)
\end{array}
\]
where $w(\bg)$ is the element of $G$ one gets by substituting
$g_{i}$ for $x_{i}$ in $w$. A \emph{word map $\omega$ on $k$ letters on $G$} is a map that can be obtained in this way: $\omega=w_G$ for some $w\in F_k$. For Lie groups, word maps are smooth, and we will see in this subsection that for nilpotent Lie groups, they are polynomial maps of degree bounded by
the nilpotency step of $G$.

\begin{definition}
Let $V$ and $W$ be two finite dimensional vector spaces. A map $f:V\rightarrow W$ is said to be \emph{polynomial of degree at most $d$} if it is a polynomial map of degree at most $d$ when expressed in some (or any) bases for $V$ and $W$.
\end{definition}

In the following lemma, $G=(\fg,*)$ is endowed with the vector space structure coming from the Lie algebra $\fg$.

\begin{lem}\label{polynomialwordmap}
Let $G$ be a connected simply connected Lie group, nilpotent of step $s$. For any word
$w\in F_{k}$, the map $w_{G}:G^k\rightarrow G$ is a polynomial map of degree at most $s$.
\end{lem}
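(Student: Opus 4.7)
My plan is to realize the word map $w_G$ as (the evaluation of) a Lie polynomial of total degree at most $s$ in the variables $g_1,\ldots,g_k$, using the universal property of the free $s$-step nilpotent Lie algebra $\cF_{k,s}$, and then to observe that iterated Lie brackets of length $\leq s$ are manifestly polynomial maps of degree $\leq s$ in the vector-space coordinates of $\fg^k$.

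More concretely, I would first endow $\cF_{k,s}$ with its BCH group law $*$ (well-defined since $\cF_{k,s}$ is $s$-step nilpotent) and consider the element $\lambda(w):=w_{(\cF_{k,s},*)}(X_1,\ldots,X_k)\in\cF_{k,s}$ obtained by substituting the free generators $X_i$ into the word $w$. Since $\cF_{k,s}$ is generated as a Lie algebra by $X_1,\ldots,X_k$ and has nilpotency step $s$, every element of $\cF_{k,s}$ is a finite $\bR$-linear combination of iterated brackets of the form $[X_{i_1},[X_{i_2},\ldots,[X_{i_{n-1}},X_{i_n}]\cdots]]$ with $1\leq n\leq s$; so we may write $\lambda(w)=\sum_\alpha a_\alpha B_\alpha$ with each $B_\alpha$ an iterated bracket of length $n_\alpha\leq s$.

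Next, for arbitrary $g_1,\ldots,g_k\in\fg$, the universal property of $\cF_{k,s}$ (applicable because $\fg$ itself is $s$-step nilpotent) yields a unique Lie algebra homomorphism $\phi:\cF_{k,s}\to\fg$ with $\phi(X_i)=g_i$. The key point is that $\phi$ is automatically a group homomorphism from $(\cF_{k,s},*)$ to $(\fg,*)$: the BCH formula recalled above is a universal expression in iterated brackets, so any map that preserves brackets must also preserve $*$ (both sides truncate at step $s$). Consequently $\phi(\lambda(w))=w_G(g_1,\ldots,g_k)$, and applying $\phi$ term by term gives
$$w_G(g_1,\ldots,g_k)=\sum_\alpha a_\alpha B_\alpha(g_1,\ldots,g_k).$$

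Finally, each iterated bracket $B_\alpha(g_1,\ldots,g_k)$ is a composition of at most $s$ applications of the bilinear bracket $[\cdot,\cdot]$, hence polynomial of degree $n_\alpha\leq s$ in the linear coordinates of the $g_i$. A finite linear combination of such terms is still polynomial of degree at most $s$, which is the desired conclusion. The only real ``obstacle'' is a careful verification that a Lie algebra homomorphism intertwines the two BCH products; this follows formally because BCH is defined termwise as a sum of iterated brackets and the nilpotency step agrees on both sides. Once that compatibility is recorded, the rest is immediate from the structure of $\cF_{k,s}$.
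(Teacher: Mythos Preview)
Your proof is correct and follows essentially the same approach as the paper's: both arguments express $w_G$ as a linear combination of iterated brackets of length at most $s$ via the Campbell--Baker--Hausdorff formula, and then observe that each such bracket is polynomial of degree at most $s$ by multilinearity. The paper simply asserts in one line that BCH yields such an expression, whereas you route the same observation through the free $s$-step nilpotent Lie algebra $\cF_{k,s}$ and its universal property; this is a slightly more formal packaging of the identical idea rather than a genuinely different argument.
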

\begin{proof}
The map $(X,Y)\mapsto [X,Y]$ is bilinear, and therefore any map of the form
$$(X_1,\dots,X_k)\mapsto [X_{i_1},[X_{i_2},\dots,[X_{i_{s-1}},X_{i_s}]\dots]]$$
is polynomial of degree at most $s$. Now, from the Campbell-Baker-Hausdorff formula, one sees that the word map $w_G$ can be expressed as a linear combination of such brackets, and this proves the lemma.
\end{proof}


\begin{definition}[Length of a word map]
For $\omega$ a word map on $k$ letters on $G$, set
$$l(\omega):=\min\{l(w)\,;\, w\in F_k \mbox{ and } w_G=\omega\}$$
and call it the \emph{length of $\omega$}. Here $l(w)$ denotes the length of the word $w$.
\end{definition}

\begin{lem}[Group of word maps]\label{mapgroup}
The set $F_{k,G}$ of word maps on $k$ letters on a group $G$ has a natural group structure, and we have the following properties:
\begin{enumerate}
\item The group $F_{k,G}$ is finitely generated.
\item If $G$ is a nilpotent group of step $s$, then so is $F_{k,G}$.
\item If $G$ is a connected Lie group, then, for almost every $k$-tuple $\bg=(g_1,g_2,\dots,g_k)\in G^k$, the subgroup $\Gamma_\bg$ generated by the subset $\{g_1,g_2,\dots,g_k\}$ is isomorphic to $F_{k,G}$.
\end{enumerate}
\end{lem}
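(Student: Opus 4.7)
The plan is to realize $F_{k,G}$ concretely as a subgroup of the direct product $G^{G^k}$ (equipped with pointwise multiplication) via the evaluation homomorphism
\[
\mathrm{ev}:F_k\longrightarrow G^{G^k},\qquad w\longmapsto w_G,
\]
whose image is exactly $F_{k,G}$. Its kernel $L_k(G):=\{w\in F_k\,:\,w_G\equiv 1\}$ is the set of $k$-variable laws of $G$, so that $F_{k,G}\cong F_k/L_k(G)$. This identification makes (1) and (2) almost automatic: the coordinate projections $\pi_i=(x_i)_G$ generate $F_{k,G}$, proving (1); and since $G^{G^k}$ inherits every group law satisfied by $G$ (laws hold coordinatewise), it is nilpotent of step at most $s$ when $G$ is, and hence so is its subgroup $F_{k,G}$. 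To see the step is exactly $s$ when $k\geq 2$, I would pick a tuple $\bg$ on which some $s$-fold iterated commutator of the coordinates is nontrivial (available because $G^{(s)}\neq\{1\}$) and observe that the corresponding commutator of the $\pi_i$ is then nonzero as an element of $F_{k,G}$.

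The substance of the lemma is part (3). For each $\bg\in G^k$, the evaluation map
\[
\mathrm{ev}_\bg:F_{k,G}\longrightarrow\Gamma_\bg,\qquad \omega\longmapsto\omega(\bg),
\]
is a surjective homomorphism, and I want to show it is injective for almost every $\bg$. Unwinding definitions, $\mathrm{ev}_\bg$ fails to be injective precisely when there exists $w\in F_k\setminus L_k(G)$ such that $w(\bg)=1$, i.e.\ $\bg$ lies in the zero set $Z_w:=\{\bg\in G^k : w(\bg)=1\}$.

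The key observation is then that for each $w\notin L_k(G)$ the subset $Z_w$ is a proper real-analytic subvariety of the connected real-analytic manifold $G^k$, since the word map $w_G$ is real-analytic (the group operations of a Lie group are real-analytic) and is by assumption not identically trivial. A classical fact then gives $\mathrm{Haar}(Z_w)=0$. Since $F_k$ is countable, the union $\bigcup_{w\in F_k\setminus L_k(G)}Z_w$ is a null set, and any $\bg$ outside it satisfies $\Gamma_\bg\cong F_{k,G}$. In the nilpotent case of primary interest, Lemma~\ref{polynomialwordmap} already tells us that $w_G$ is polynomial of degree at most $s$ in exponential coordinates, so $Z_w$ is in fact a proper algebraic subvariety and the measure-zero conclusion is especially clean.

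The main (and essentially only) technical content is thus the measure-zero statement for the zero set of a nontrivial real-analytic (or polynomial) map between Lie groups; everything else is an unpacking of the fact that $F_{k,G}=F_k/L_k(G)$ and of the universal property of this quotient. The one place where care is needed is ensuring that nilpotency step is preserved exactly (not merely up to $\leq s$) in part (2), which forces the mild hypothesis $k\geq 2$ and the choice of a witnessing tuple as above.
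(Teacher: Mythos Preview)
Your argument is correct and follows essentially the same route as the paper: both realize $F_{k,G}$ as the quotient $F_k/L_k(G)$ (the paper via the surjection $\Phi:F_k\to F_{k,G}$, you via the embedding into $G^{G^k}$, which amounts to the same thing), declare (1) and (2) immediate, and prove (3) by observing that each nontrivial word map has a proper analytic zero set of Haar measure zero and taking the countable union. Your extra discussion of the \emph{exact} nilpotency step in (2) is not needed here---the paper reads the statement as ``step $\leq s$'' (indeed for $k=1$ the group $F_{1,G}$ is abelian regardless of $G$), so the caveat $k\geq 2$ is unnecessary.
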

\begin{proof}
Let $F_k$ be the abstract free group on $k$ letters $x_1,\ldots,x_k$. The group structure on $F_{k,G}$ is uniquely defined by the fact that the surjective map
$$\begin{array}{lccc}
\Phi: & F_k & \rightarrow & F_{k,G}\\
& w & \mapsto & w_G
\end{array}
$$
is a group homomorphism. Items $(1)$ and $(2)$ are clear. To justify $(3)$ note that if $w \notin \ker \Phi$, then the set of $\bg\in G^k$ such that $w(\bg)=0$ is a proper analytic subvariety of $G^k$, and hence has zero Lebesgue measure. They are only countably many such $w$'s, so we see that for almost every $k$-tuple $\bg\in G^k$, the unique group homomorphism $\theta_\bg:F_k\rightarrow \Gamma_\bg$ mapping $x_i$ to $g_i$ for $i\in\{1,\dots,k\}$ has kernel exactly $\ker \Phi$. Hence $\Gamma_\bg$ is isomorphic to $F_{k,G}$.
\end{proof}

Recall (cf. \cite{bassformula, guivarchformula, pansu}) that every finitely generated $s$-step nilpotent group $\Gamma=\langle S \rangle$ has polynomial growth, and more precisely there is an integer $\tau(\Gamma) \in \NN$ independent of the generating set $S$ such that $|B_\Gamma(n)| \sim c_S \cdot n^{\tau(\Gamma)}$ for some constant $c_S>0$. The growth exponent $\tau(\Gamma)$ is given by the Bass-Guivarc'h formula:

\begin{equation}\label{BGf}
\tau(\Gamma) = \sum_{k=1}^s k \cdot \rrk(\Gamma^{(k)}/\Gamma^{(k+1)}),
\end{equation}
where the $\Gamma^{(k)}$'s are the terms of the central descending series of $\Gamma$ and $\rrk$ the (torsion-free) rank of the corresponding abelian group. We conclude:

\begin{cor}[Generic growth for a random subgroup]\label{mapgrowth}
Let $\tau_k$ be the growth exponent of the finitely generated group $F_{k,G}$. Then, for almost every $\bg\in G^k$, the group $\Gamma_\bg$ has growth exponent~$\tau_k$.
\end{cor}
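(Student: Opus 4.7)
The plan is to derive the corollary directly from Lemma~\ref{mapgroup}(3) together with the Bass-Guivarc'h formula~\eqref{BGf}. The essential observation is that the growth exponent $\tau(\Gamma)$ of a finitely generated nilpotent group is an isomorphism invariant, since formula~\eqref{BGf} expresses it purely in terms of the torsion-free ranks of the successive quotients of the central descending series of $\Gamma$, which are abstract group-theoretic data.

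First, I would invoke Lemma~\ref{mapgroup}(3) to assert that there is a full-measure subset $\Omega \subset G^k$ such that for every $\bg \in \Omega$, the unique homomorphism $\theta_\bg : F_k \to \Gamma_\bg$ sending $x_i$ to $g_i$ has kernel equal to $\ker \Phi$, and hence factors through an isomorphism $F_{k,G} = F_k/\ker\Phi \xrightarrow{\sim} \Gamma_\bg$. Since $G$ is nilpotent of step~$s$ and $\Gamma_\bg$ is finitely generated (by $g_1,\ldots,g_k$), the group $\Gamma_\bg$ is a finitely generated nilpotent group of step at most~$s$, so formula~\eqref{BGf} applies to it.

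Next, I would conclude: the isomorphism $F_{k,G} \simeq \Gamma_\bg$ induces isomorphisms between the successive quotients $F_{k,G}^{(j)}/F_{k,G}^{(j+1)} \simeq \Gamma_\bg^{(j)}/\Gamma_\bg^{(j+1)}$ of the respective central descending series, hence preserves the torsion-free ranks appearing in~\eqref{BGf}. Therefore $\tau(\Gamma_\bg) = \tau(F_{k,G}) = \tau_k$ for every $\bg \in \Omega$, which is the desired statement.

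There is no real obstacle here; the argument is a one-line consequence of the preceding lemma once one recalls that polynomial growth degree is an invariant of the abstract group structure. The only point to double-check is that the exceptional set in Lemma~\ref{mapgroup}(3) is indeed of Lebesgue (equivalently Haar) measure zero in $G^k$, which was already handled there by the countable union of proper analytic subvarieties $\{\bg : w(\bg) = 0\}$ for $w \notin \ker\Phi$.
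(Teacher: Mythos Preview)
Your proof is correct and is exactly the argument the paper intends: the corollary is stated immediately after Lemma~\ref{mapgroup} and the Bass--Guivarc'h formula with the words ``We conclude,'' and you have simply spelled out that conclusion. There is nothing to add.
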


\subsection{Estimates on polynomial maps}

The purpose of this paragraph is to derive the elementary estimates on the measure of the set of points on which a polynomial takes small values.

Let $B$ be a convex subset of $\RR^n$, and $f:B\rightarrow \RR$  a real-valued polynomial function of degree at most $d$. For $\epsilon>0$, we set $Z_{\epsilon,B}(f):=\{x\in B; |f(x)|\leq\epsilon\}$. Then, from Brudnyi-Ganzburg \cite{brudnyiganzburg}, we have the following:

\begin{thm}[Remez-type inequality]\label{remez}
$$\sup_{x\in B}|f(x)|\leq \epsilon \cdot T_d\left(\frac{1+(1-|Z_{\epsilon,B}(f)|/|B|)^{\frac{1}{n}}}{1-(1-|Z_{\epsilon,B}(f)|/|B|)^{\frac{1}{n}}}\right)$$
where $T_d$ is the $d$-th Chebyshev polynomial of the first kind.
\end{thm}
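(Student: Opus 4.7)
The plan is to reduce the multidimensional estimate to the classical one-variable Remez inequality by a polar coordinate / integral geometry argument. Recall the one-dimensional statement: if $I\subset\RR$ is a compact interval, $p$ a real polynomial of degree at most $d$, and $E\subseteq I$ a measurable subset with $|I\setminus E|\le s$, then
\[
\sup_{I}|p|\;\le\;\Bigl(\sup_{E}|p|\Bigr)\cdot T_d\!\left(\frac{|I|+s}{|I|-s}\right).
\]
Because $T_d$ is increasing on $[1,\infty)$, this bound is monotone in the ratio $|I\setminus E|/|I|$, so a good segment on which the bad set is small will yield a good estimate.

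First, pick $x^*\in\overline B$ with $|f(x^*)|=\sup_B|f|$, and set
\[
\mu:=|Z_{\epsilon,B}(f)|/|B|,\qquad \lambda:=(1-\mu)^{1/n},\qquad W:=B\setminus Z_{\epsilon,B}(f).
\]
The heart of the argument is to produce a line segment $I\subseteq\overline B$ with endpoint $x^*$ on which $|W\cap I|\le\lambda\,|I|$. Granted such an $I$, the restriction $f|_I$ is a polynomial of degree at most $d$ in one real variable, is bounded by $\epsilon$ on $Z_{\epsilon,B}(f)\cap I$, and the one-dimensional Remez inequality applied with $E=Z_{\epsilon,B}(f)\cap I$ and $s=|I\setminus E|\le\lambda\,|I|$ gives $|f(x^*)|\le\sup_I|f|\le\epsilon\cdot T_d\bigl(\tfrac{1+\lambda}{1-\lambda}\bigr)$, which is exactly the claimed bound.

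To exhibit such a segment I would argue by contradiction using radial coordinates from $x^*$. For $u\in S^{n-1}$ set $r_B(u):=\sup\{t\ge 0\,:\, x^*+tu\in B\}$; by convexity of $B$ the ray segment $I_u:=\{x^*+tu\,:\,0\le t\le r_B(u)\}$ lies in $\overline B$. Suppose, for contradiction, that $|W\cap I_u|>\lambda\,r_B(u)$ for every $u\in S^{n-1}$. Since the weight $t^{n-1}$ is non-decreasing on $[0,\infty)$, the bathtub (Hardy--Littlewood) rearrangement principle yields
\[
\int_0^{r_B(u)}\mathbf{1}_W(x^*+tu)\,t^{n-1}\,dt\;\ge\;\int_0^{|W\cap I_u|}t^{n-1}\,dt\;>\;\frac{\lambda^n\,r_B(u)^n}{n}.
\]
Integrating over $u$ against the surface measure $d\sigma$ and using the polar formulae $|B|=\tfrac{1}{n}\int_{S^{n-1}}r_B(u)^n\,d\sigma(u)$ and $|W|=\int_{S^{n-1}}\int_0^{r_B(u)}\mathbf{1}_W(x^*+tu)\,t^{n-1}\,dt\,d\sigma(u)$ (valid because $\overline B$ is a star body with respect to $x^*$ by convexity) gives the strict inequality $|W|>\lambda^n|B|=(1-\mu)|B|=|W|$, a contradiction. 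Hence some direction $u$ provides the required segment $I=I_u$.

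The main obstacle, such as it is, is the geometric step: the bathtub bound on each ray is a one-line calculation, but one needs to verify that the polar identities remain legitimate when $x^*\in\partial B$ (only directions with $r_B(u)>0$ contribute, and these still cover $\overline B$ up to a null set by convexity). Once that geometric fact is in hand, the rest of the argument is a direct invocation of the one-variable Remez inequality together with the monotonicity of $T_d$ on $[1,\infty)$.
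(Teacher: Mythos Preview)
The paper does not prove this theorem; it simply quotes it from Brudnyi--Ganzburg \cite{brudnyiganzburg} and then uses it to derive the sublevel-set estimate (Proposition~\ref{prop:estimates on Polynomial maps}). Your argument is correct and is in fact essentially the original Brudnyi--Ganzburg proof: pick a point where $|f|$ is maximal, use polar coordinates centred there together with the bathtub inequality to find a chord through that point on which the ``bad'' set $W=B\setminus Z_{\epsilon,B}(f)$ has linear density at most $(1-\mu)^{1/n}$, and then invoke the one-dimensional Remez inequality on that chord. The only delicate point you flag --- that the polar identities still hold when $x^*\in\partial B$ --- is handled exactly as you say, since convexity makes $\overline B$ star-shaped about $x^*$ and rays with $r_B(u)=0$ contribute nothing to either integral.
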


Here $|B|$ denotes the Lebesgue measure of $B$ in $\RR^n$. It is well-known that those Remez-type inequalities imply the following estimate; we recall the proof for convenience of the reader.

\begin{prop}[Sublevel set estimate]
\label{prop:estimates on Polynomial maps}Fix $n_1,n_2,d \in \NN^*$. For any polynomial map $f: \RR^{n_1}\rightarrow \RR^{n_2}$  of degree at most $d$ and any convex subset of $B$ of $\RR^{n_1}$, one  has:
\begin{equation}\label{sublevelset}
|\{x \in B; \|f(x)\|\leq \epsilon\}|\leq 4\cdot  n_1 \left(\frac{\epsilon}{\|f\|_B}\right)^{\frac{1}{d}} |B|,
\end{equation}
where $\|f\|_B:=\sup_{x\in B}\|f(x)\|$, and $\|f(x)\|:=\max_{1\leq i \leq n_2} |f_i(x)|$ if $f(x)=(f_1(x),...,f_{n_2}(x)) \in \RR^{n_2}$.
\end{prop}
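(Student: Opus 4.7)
The plan is to reduce the vector-valued estimate to a scalar estimate on $B$, and then to extract the stated bound from the Remez-type inequality of Theorem~\ref{remez} by using elementary estimates on Chebyshev polynomials.

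First, since $\|f(x)\|=\max_{i}|f_i(x)|$, one has $\|f\|_B=\max_i\|f_i\|_B$, so there exists an index $i_0\in\{1,\dots,n_2\}$ with $\|f_{i_0}\|_B=\|f\|_B$. Moreover $\{x\in B: \|f(x)\|\leq\epsilon\}\subseteq\{x\in B:|f_{i_0}(x)|\leq\epsilon\}=Z_{\epsilon,B}(f_{i_0})$. Thus it is enough to prove the inequality (\ref{sublevelset}) for the scalar polynomial $f_{i_0}$ of degree $\leq d$ in place of $f$; we may assume from now on that $n_2=1$ and write simply $f$ and $Z_{\epsilon,B}(f)$.

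Next, if $\|f\|_B=0$ the inequality is vacuous, and if $4n_1(\epsilon/\|f\|_B)^{1/d}\geq 1$ it is trivial from $|Z_{\epsilon,B}(f)|\leq |B|$. So assume otherwise. Set $\mu:=|Z_{\epsilon,B}(f)|/|B|\in[0,1]$ and $y:=(1-\mu)^{1/n_1}\in[0,1]$. The Remez-type inequality of Theorem~\ref{remez} gives
\[
\frac{\|f\|_B}{\epsilon}\;\leq\; T_d\!\left(\frac{1+y}{1-y}\right).
\]
Writing any $u\geq 1$ as $u=\cosh\theta$ with $\theta\geq 0$, we have $T_d(u)=\cosh(d\theta)\leq e^{d\theta}$, while $u=\cosh\theta\geq e^\theta/2$ gives $e^\theta\leq 2u$, so $T_d(u)\leq (2u)^d$ for all $u\geq 1$. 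Applied to $u=(1+y)/(1-y)$, and using $1+y\leq 2$, this yields
\[
\frac{\|f\|_B}{\epsilon}\;\leq\;\left(\frac{2(1+y)}{1-y}\right)^{\!d}\;\leq\;\left(\frac{4}{1-y}\right)^{\!d},
\]
hence $1-y\leq 4\,(\epsilon/\|f\|_B)^{1/d}$.

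Finally I convert the bound on $1-y=1-(1-\mu)^{1/n_1}$ into a bound on $\mu$. Bernoulli's inequality for exponents in $(0,1]$ gives $(1-\mu)^{1/n_1}\leq 1-\mu/n_1$, that is, $1-y\geq \mu/n_1$. Combining the two,
\[
\frac{\mu}{n_1}\;\leq\;1-y\;\leq\;4\left(\frac{\epsilon}{\|f\|_B}\right)^{\!1/d},
\]
and multiplying by $n_1|B|$ yields exactly (\ref{sublevelset}). The only non-routine input is the sharp form of Theorem~\ref{remez}; the rest is elementary. I do not foresee a genuine obstacle, the most delicate point being the clean Chebyshev estimate $T_d(u)\leq(2u)^d$, which however is standard.
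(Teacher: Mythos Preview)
Your proof is correct and follows essentially the same route as the paper's: reduce to the scalar case, apply Theorem~\ref{remez}, bound $T_d(u)\leq (2u)^d$ for $u\geq 1$, and convert the resulting inequality on $(1+y)/(1-y)$ into a bound on $\mu=|Z_{\epsilon,B}(f)|/|B|$. The only cosmetic differences are that the paper proves $T_d(u)\leq (2u)^d$ from the explicit formula $T_d(x)=\tfrac12\bigl((x-\sqrt{x^2-1})^d+(x+\sqrt{x^2-1})^d\bigr)$ rather than via $\cosh$, and it bundles your two steps $1+y\leq 2$ and $1-y\geq \mu/n_1$ into the single estimate $\frac{1+y}{1-y}\leq \frac{2n_1}{\mu}$ (proved via the geometric series $\frac{1-z^{n_1}}{1-z}\leq n_1$, which is equivalent to your Bernoulli inequality).
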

\begin{proof}  Clearly we may assume that $n_2=1$. Then the estimate follows immediately from the above Remez-type inequality after verifying the following two simple facts: first, for every $\eta \in [0,1]$, $\frac{1+(1-\eta)^{\frac{1}{n_1}}}{1-(1-\eta)^{\frac{1}{n_1}}}\leq \frac{2n_1}{\eta}$;  and second, $T_d(x)=\frac{1}{2}((x-\sqrt{x^2-1})^d + (x+\sqrt{x^2-1})^d )\leq \frac{1}{2}(2^d+1)x^d\leq 2^d x^d$ if $x \geq 1$. The first calculus fact can be seen as follows: using the change of variable $z=(1-\eta)^{\frac{1}{n_1}}$, the desired inequality is equivalent to $\frac{1+z}{1-z} \leq  2n_1/(1-z^{n_1})$, hence to $(1+z) \frac{1-z^{n_1}}{1-z} \leq 2n_1$. Since $z \in [0,1)$ and $\frac{1-z^{n_1}}{1-z}=1+z+\ldots+z^{n_1-1}$, the inequality follows.

\end{proof}

\begin{remark} The above bound goes back to Remez \cite{remez}. See \cite{Ga2000} for some historical discussion and related references.  We thank P. Varj\'u for providing these references. Note that the exponent $\frac{1}{d}$ is independent of the dimensions $n_1$ and $n_2$. This can be compared to \cite[\S 3]{kleinbock-margulis} and \cite[Lemma \S 3.4]{kleinbock-tomanov}, where a similar bound, albeit with a worse exponent $\frac{1}{dn_1}$ is derived. The fact that the exponent is independent of $n_1$ will be important in the proof of Theorem \ref{concludeth}.
\end{remark}

\section{A characterization of Diophantine nilpotent Lie groups}
\label{sec:characterization}

\subsection{The Borel-Cantelli Lemma and condition (PUB)}
\label{pubcondition}

Let $G=(\fg,*)$ be a connected simply connected nilpotent Lie group, endowed with a left-invariant Riemannian metric. Denote $n=\dim G$ and $s$ the nilpotency step of $G$. Throughout we identify $G$ with its Lie algebra $\fg$ via the exponential map. Then Lebesgue measure on $\fg$ coincides with Haar measure on $G$. In the following, we fix a basis of $\fg$ and the associated norm $\|x\|:=\max\{|x_i|\}$ on $\fg$.

\begin{definition}(Polynomial Uniform lower Bound)\label{pubdef}
We will say that the group $G$ satisfies \emph{condition (PUB) for words in $k$ letters} - or simply (PUB$_k$) - if there exist constants $C_k>0$ and $A_k>0$ such that
\begin{equation}
\forall \omega\in F_{k,G},\quad \left(\sup_{\mathbf{g}\in B_{G^k}(0,1)} \|\omega(\bg)\|\right) \geq C_k \cdot l(\omega)^{-A_k}. \tag{PUB$_k$}
\end{equation}
\end{definition}

This condition ensures that no word map contracts too much the unit ball of $G^k$. In other terms, it is a polynomial uniform lower bound on the size of word maps, whence the chosen abbreviation (PUB).

\begin{prop}\label{diophantinepub}
Let $G$ be a connected simply connected nilpotent Lie group. Then $G$ is Diophantine for words on $k$ letters if and only if it satisfies condition (PUB) for words on $k$ letters. In short,
\begin{center}
{\upshape
Diophantine$_k$ $\Longleftrightarrow$ (PUB$_k$).
}
\end{center}
\end{prop}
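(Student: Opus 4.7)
The plan is to prove both implications. The forward direction ``Diophantine $\Rightarrow$ (PUB$_k$)'' is handled by a direct contradiction. The reverse direction ``(PUB$_k$) $\Rightarrow$ Diophantine'' is the substantive part, and I would set it up as a Borel--Cantelli argument built on the sublevel-set estimate of Proposition \ref{prop:estimates on Polynomial maps}.

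For the easy direction, suppose $G$ is $\beta$-Diophantine on $k$-tuples and yet (PUB$_k$) fails. Negating the condition yields, for each integer $m \geq 1$, a non-trivial word map $\omega_m \in F_{k,G}$ with $\sup_{\bg \in B_{G^k}(0,1)}\|\omega_m(\bg)\| \leq l(\omega_m)^{-m}$. The lengths $l(\omega_m)$ must tend to infinity, since $F_{k,G}$ has polynomial growth (so only finitely many word maps have bounded length), and by Lemma \ref{polynomialwordmap} a non-trivial polynomial map cannot vanish on an open set. By Lemma \ref{mapgroup}(3) and Corollary \ref{mapgrowth} one may then pick a tuple $\bg \in B_{G^k}(0,1)$ for which $\Gamma_\bg \cong F_{k,G}$, the growth of $\Gamma_\bg$ is genuinely of order $\tau_k$ (see \eqref{BGf}), and moreover $\Gamma_\bg$ is $\beta$-Diophantine; the word length of $\omega_m(\bg)$ in $\Gamma_\bg$ then equals $l(\omega_m)$, whence
\[
c_\bg\, l(\omega_m)^{-\beta \tau_k} \;\leq\; \|\omega_m(\bg)\| \;\leq\; l(\omega_m)^{-m}.
\]
This becomes absurd as soon as $m > \beta \tau_k$ and $l(\omega_m)$ is sufficiently large, giving the contradiction.

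For the hard direction, assume (PUB$_k$) with constants $C_k, A_k$, and fix a parameter $\beta > 0$ to be chosen below. Consider the events $E_\omega = \{\bg \in B_{G^k}(0,1) : \|\omega(\bg)\| \leq l(\omega)^{-\beta \tau_k}\}$ indexed by non-trivial $\omega \in F_{k,G}$. By Lemma \ref{polynomialwordmap} each $\omega$ is polynomial of degree at most $s$ on $G^k \cong \RR^{kn}$, while (PUB$_k$) gives $\|\omega\|_{B_{G^k}(0,1)} \geq C_k l(\omega)^{-A_k}$; Proposition \ref{prop:estimates on Polynomial maps} then yields $|E_\omega| \ll l(\omega)^{(A_k - \beta \tau_k)/s}$. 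Grouping words by dyadic length intervals and using the Bass--Guivarc'h count $|B_{F_{k,G}}(N)| \asymp N^{\tau_k}$ from \eqref{BGf}, the total mass contributed by words with $l(\omega) \in [2^j, 2^{j+1}]$ is $O\bigl(2^{j(\tau_k + (A_k - \beta \tau_k)/s)}\bigr)$. Choosing $\beta > s + A_k/\tau_k$ makes this exponent strictly negative, so $\sum_\omega |E_\omega| < \infty$, and Borel--Cantelli provides, for almost every $\bg \in B_{G^k}(0,1)$, a constant $c_\bg > 0$ with $\|\omega(\bg)\| \geq c_\bg\, l(\omega)^{-\beta \tau_k}$ for all non-trivial $\omega$. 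This is exactly the $\beta$-Diophantine condition for $\Gamma_\bg$; covering $G^k$ by countably many unit balls extends the almost-sure statement from $B_{G^k}(0,1)$ to the whole of $G^k$.

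The decisive technical point, and the main conceptual obstacle, is the sharpness of the exponent $1/s$ in Proposition \ref{prop:estimates on Polynomial maps}: it is independent of the ambient dimension $kn$. A weaker bound of the form $1/(s\cdot kn)$, of the type that appears in the work of Kleinbock--Margulis, would force $\beta$ to blow up with $k$ and would break the Borel--Cantelli summability; the fact that the Remez-type inequality avoids this dimension dependence is what makes the equivalence clean.
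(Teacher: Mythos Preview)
Your proof is correct and follows the same Borel--Cantelli strategy as the paper; the reverse direction is essentially identical, and your forward direction (choosing a single Diophantine tuple and deriving a contradiction) is a minor variant of the paper's argument, which instead exhibits directly a full-measure set of non-Diophantine tuples.

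Two small corrections are in order. First, your negation of (PUB$_k$) should carry the constant as well, e.g.\ $\sup_{\bg}\|\omega_m(\bg)\|\leq \tfrac{1}{m}\,l(\omega_m)^{-m}$; without the $\tfrac{1}{m}$ factor your argument that $l(\omega_m)\to\infty$ has a small gap when $l(\omega_m)=1$, since then the bound reads $\sup\|\omega_m\|\leq 1$ and gives no contradiction. Second, your final paragraph overstates the role of the sharp exponent $1/s$: for \emph{this} proposition any sublevel exponent of the form $1/D$ (even one depending on the ambient dimension $kn$) would still make the Borel--Cantelli series converge, just for a larger threshold on $\beta$; the equivalence Diophantine$_k\Leftrightarrow$(PUB$_k$) is unaffected. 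The dimension-independence of the Remez exponent is crucial only later, in bounding $\limsup_k\beta_k$ (Theorem~\ref{concludeth} and Remark~\ref{bestexp}).
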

\begin{proof}
We start by the implication $(\Rightarrow)$ and prove the contrapositive. If $G$ does not satisfy (PUB$_k$), then there is a sequence of words $w_n$ in $F_{k}$ whose associated word maps on $G$ are nontrivial and satisfy $\|w_n(\bg)\|\leq \frac{1}{n}l(w_n)^{-n}$ for all $\bg \in B_{G^k}(0,1)$.
However the level sets $w_n^{-1}(\{0\})$ are proper subsets of zero Haar measure in $G^k$. The complement of the union of all these sets therefore has full measure in $G^k$. No $k$-tuple in this complement is Diophantine, hence $G$ is not Diophantine on $k$ letters.

\bigskip

Now we turn to the converse $(\Leftarrow)$. It is based on the Borel-Cantelli lemma.
For a word map $\omega$ on $G$, and $R\geq 1,\beta>0$, set
$$E_{\omega}(\beta):=\left\{\bg \in B_{G^{k}}(0,R) :\,\|\omega(\bg)\| \leq |B_{\Gamma_\bg}(l(\omega))|^{-\beta}\right\}.$$
Recall that $\tau_k:=\tau(\Gamma_\bg)$ is the growth exponent of $\Gamma_\bg$ for a random $\bg$ (i.e. $|B_{\Gamma_\bg}(n)|\sim c_k n^\tau_k,$ see Corollary \ref{mapgrowth}), that $s$ is the nilpotency  class of $G$ and that $A_k$ is the exponent appearing in the above definition of (PUB$_k$). We will show that if $\beta > s + \frac{A_k}{\tau_k}$ one has
$$\sum_{\omega\in F_{k,G}}|E_\omega(\beta)| < \infty.$$
In view of the  Borel-Cantelli Lemma, this is enough to conclude that $G$ is Diophantine for $k$ letters.
By Lemma~\ref{polynomialwordmap} we know that if $\omega$ is a word map on $G$, then it is polynomial of degree at most $s$. Therefore, if it is nontrivial, we may apply Proposition~\ref{sublevelset} with $f=\omega$, $B=B_{G^k}(0,R)$ and $\epsilon=\frac{c_k}{2}l(\omega)^{-\tau_k\beta}>0$ to get, assuming (PUB$_k$)
\begin{equation}\label{slvl}
|E_\omega(\beta)| \ll \left(\frac{l(\omega)^{-\tau_k\beta}}{\|\omega\|_B}\right)^{\frac{1}{s}} \ll l(\omega)^{\frac{-\tau_k\beta +A_k}{s}},
\end{equation}
where the implied constants depend only on $G$, $R$ and $k$. Therefore
$$\sum_{\omega\in F_{k,G}}|E_\omega(\beta)| \ll \sum_{\omega\in F_{k,G}} l(\omega)^{\frac{-\tau_k\beta +A_k}{s}}.$$
We now split this infinite sum in annuli $\{ \omega \in F_{k,G} ; 2^{m-1} \leq l(\omega) < 2^{m}\}$ noting that the size of each annulus is bounded above by the size of the ball of radius $2^{m}$ in $F_{k,G}$ and hence is a $O(2^{m\tau_k})$. It follows that the series converges for every $\beta>s+\frac{A_k}{\tau_k}$ as desired.
\end{proof}

\begin{remark}\label{bestexp} The proof shows that condition (PUB$_k$) with exponent $A_k$ implies that $G$ is Diophantine for words in $k$ letters, with exponent $s+A_k/\tau_k$. We will show later that the number $A_k$ can always be bounded above independently of $k$. As a consequence (see Proposition \ref{limsupbeta}), for every $\beta>s$, every Diophantine $s$-step nilpotent Lie group is $\beta$-Diophantine on $k$ letters if $k$ is large enough.
\end{remark}



\begin{remark}\label{rational}
If $G=\RR^d$, then word maps are of the form $(x_1,\dots,x_k)\mapsto \sum_{i=1}^kn_ix_i$, with $n_i\in\ZZ$, so condition (PUB) is clearly satisfied.
More generally, one can see, e.g. using the Campbell-Baker-Hausdorff formula, that word maps on rational nilpotent Lie groups have integer coefficients in an appropriate basis, so those groups certainly satisfy (PUB), and hence are Diophantine. This will also follow from the detailed analysis of condition (PUB) that we describe in \ref{wordmapslaws}.
\end{remark}

\subsection{An example: 2-step nilpotent groups}

In order to motivate the discussion of the next paragraph, we start by studying the case of 2-step nilpotent groups. We will show that all of them satisfy condition (PUB) and hence are Diophantine. Note that this is not a particular case of Remark~\ref{rational} above, as there exist non-rational nilpotent Lie groups of step two (see Raghunathan, \cite[Remark~2.14, page 38]{raghunathan}).

Let $G=(\fg,*)$ be a connected simply connected nilpotent Lie group of step 2, and let $k\in\bN$.
As $F_{k,G}$ is nilpotent of step 2, by \cite[\S 5.1]{MKS76}, any word map $\omega\in F_{k,G}$ has a representative of the form
\begin{equation}
w=\prod_{i=1}^{k}x_{i}^{e_{i}}\prod_{1\leq i<j\leq k}[x_{i},x_{j}]^{f_{i,j}}.\label{eq:word w}
\end{equation}
We will show that $\|\omega\|_B:=\sup_{\bg \in B_{G^k}(0,1)}{\|\omega(\bg)\|}$ admits a uniform lower bound when $\omega$ ranges over the set of word maps on $k$ letters on $G$. Clearly this will imply that $G$ satisfies (PUB$_k$) and hence is Diophantine thanks to Proposition \ref{diophantinepub}.

The map $\omega$ naturally induces a map on the abelianization, which under the identification $G\simeq \fg$ and in view of $(\ref{eq:word w})$ is just  $ \bar{\omega}: \fg^k  \to  \fg/\fg^{(2)}, (x_1,\dots,x_k)  \mapsto  \sum_{i=1}^k e_ix_i \mod \fg^{(2)}.$ If some $e_i$ is non zero, then obviously the map is not uniformly small on $B_{G^k}(0,1)$ and $\|\omega\|_B \gg 1$. If however all $e_i$'s are zero, then the map can be written $\omega: \fg^k  \to  \fg^{(2)}, (x_1,\dots,x_k)  \mapsto  \sum_{1\leq i<j\leq k}f_{ij}\cdot[x_{i},x_{j}]$, where $f_{ij} \in \ZZ$. Fixing $i<j$ and letting $x_m$ be the identity for all indices $m$ except $i$ and $j$, we see that $\|\omega\|_B \geq |f_{ij}| \sup_{x,y \in B_G(0,1)}\|[x,y]\|$. Since the $f_{ij}$'s are integers and not all zero, we obtain the desired uniform lower bound.

\bigskip

With analogous elementary methods, one can also treat the case of $3$-step nilpotent groups. The idea, as in the $2$-step case, consists in verifying that relations among word maps have to be rational. In particular all word maps sit inside a lattice in the space of polynomial maps $\fg^k\ra\fg$, and this yields the desired uniform lower bound on their norm. Unfortunately this approach fails in step $6$ and higher as we will demonstrate below.

We now turn to a more systematic study of the possible relations among word maps on a nilpotent Lie group, which will allow us, in section~\ref{sec:submodules}, not only to show that nilpotent groups of step $\leq 5$ satisfy (PUB), but also to construct non-Diophantine nilpotent Lie groups of any step $\geq 6$.

\subsection{Laws on nilpotent Lie algebras and the Diophantine property}\label{wordmapslaws}
Our goal in the remainder of this section will be to prove another characterization of the Diophantine property for a nilpotent Lie group $G$ in terms of the ideal of laws of the Lie algebra $\g$ (cf. Theorem \ref{diophantinelaws} below). Recall that $G$ is a connected simply connected nilpotent Lie group of step $s$. The group $F_{k,G}$ of word maps on $k$ letters on $G$ is by definition isomorphic to the quotient group $F_{k,s}/R_{k,s}(G)$, where $F_{k,s}$ is the free nilpotent group of step $s$ on $k$ generators, and $R_{k,s}(G)$ is the group of laws of $G$ in $F_{k,s}$, i.e. the normal subgroup of $F_{k,s}$ consisting of all (classes of) words that are identically zero on $G^k$.

\bigskip

For any ring $R$, we denote by $\cF_{k,s}(R)$ the free $s$-step nilpotent Lie algebra on the finite set $\{\sx_1,\dots,\sx_k\}$ over $R$. It is the quotient of the free Lie algebra by the ideal generated by commutators of length at least $s+1$. For $R=\RR$, we write $\cF_{k,s}=\cF_{k,s}(\RR)$.
The group $F_{k,s}$ sits naturally as a lattice in $(\cF_{k,s},*)$, the simply connected nilpotent Lie group associated to the free nilpotent Lie algebra of step $s$ over $k$ generators. 
The precise connection between group laws and Lie algebra laws is given by the following lemma. If $\sr \in \cF_{k,s}$ is written as a sum of homogeneous components $\sr=\sum_1^s \sr_i$, each $\sr_i$ being a linear combination of brackets of order $i$, then we set $|\sr|:=\max_1^s \|\sr_i\|^{\frac{1}{i}}$, where $\|\cdot\|$ is a norm on $\cF_{k,s}$.

\begin{lem}\label{lawtoword}
For each $s\in\bN^*$, there are positive integers $C,D$ such that
\begin{itemize}
\item If $w\in F_{k,s}$, then $w(e^{\sx_1},\dots,e^{\sx_k})=e^{\frac{1}{C}\sr(\sx_1,\dots,\sx_k)}$  for some $\sr\in\cF_{k,s}(\ZZ)$ with $|\sr|\leq D \cdot l(w)$.
\item If $\sr\in\cF_{k,s}(\ZZ)$, then $e^{C{\sr(\sx_1,\dots,\sx_k)}}=w(e^{\sx_1},\dots,e^{\sx_k})$ for some $w\in F_k$ with $l(w)\leq D \cdot|\sr|$.
\end{itemize}
\end{lem}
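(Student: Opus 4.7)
My strategy is to work inside the free simply connected $s$-step nilpotent Lie group $G_0=(\cF_{k,s},\ast)$, where the subgroup $\Gamma_0$ generated by $e^{\sx_1},\dots,e^{\sx_k}$ is a lattice naturally isomorphic to $F_{k,s}$. Both halves of the lemma then amount to a two-sided quantitative comparison between the word length on $\Gamma_0$ for its standard generating set and the weighted homogeneous norm $|\,\cdot\,|$ on $\cF_{k,s}$, once a universal integer $C=C(s)$ has been inserted to clear the rational denominators produced by the truncated BCH formula. In spirit this is the Bass--Guivarc'h--Pansu comparison between a lattice in a simply connected nilpotent Lie group and its Malcev completion, specialised to the free case and made quantitative.

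For the first bullet I would induct on $n=l(w)$. Writing $w=w'\,\sx_{i_n}^{\varepsilon}$ and applying BCH gives $e^{\sr(w')}\ast e^{\varepsilon \sx_{i_n}}=e^{\sr(w)}$; since $G_0$ is $s$-step nilpotent only finitely many brackets appear, all with rational coefficients whose denominators divide a universal integer $C=C(s)$, so $\sr(w)\in\frac{1}{C}\cF_{k,s}(\ZZ)$. For the size estimate, assume inductively that $\|\sr(w')_i\|\leq M_i(n-1)^i$ for each $i$. The BCH step adds to the degree-$i$ component of $\sr$ a bounded linear combination of iterated brackets built from the $\sr(w')_j$'s and $\sx_{i_n}$ of total degree $i$, hence of norm $O(n^{i-1})$. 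After adjusting the $M_i$'s the induction closes and yields $|\sr(w)|\leq D\cdot l(w)$.

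The second bullet is the harder half and is where I expect the real work. Given $\sr\in\cF_{k,s}(\ZZ)$, my plan is to realise $e^{C\sr}$ by concatenating short words that annihilate the homogeneous layers of the residual one at a time, working from degree $1$ upward. The basic building block is the iterated group commutator identity
$$
\bigl[e^{p_1\sx_{i_1}},[e^{p_2\sx_{i_2}},\dots,[e^{p_{d-1}\sx_{i_{d-1}}},e^{p_d\sx_{i_d}}]\dots]\bigr]=\exp\bigl(p_1 p_2\cdots p_d\,\alpha+R\bigr),
$$
produced by BCH, where $\alpha=[\sx_{i_1},[\sx_{i_2},\dots,[\sx_{i_{d-1}},\sx_{i_d}]\dots]]$ is a right-normed basic commutator of degree $d$ and $R\in\cF_{k,s}$ has bracket-degree strictly larger than $d$. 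The left-hand side is a word of length $O(p_1+\cdots+p_d)$, so choosing $p_j$ of order $|m|^{1/d}$ realises any integer multiple $m\alpha$ at cost $O(|m|^{1/d})$, modulo a higher-degree correction $R$. Fixing a right-normed Hall $\ZZ$-basis of $\cF_{k,s}(\ZZ)$ and multiplying $\sr$ by a universal $C$ to clear the denominators appearing in the passage from first-kind to second-kind Malcev coordinates, at stage $d$ one then annihilates the degree-$d$ part of the residual by killing its Hall basis components one by one. Since that degree-$d$ part has norm at most $O(|\sr|^d)$, stage $d$ contributes a word of length $O(|\sr|)$, and summing over $d=1,\dots,s$ yields $l(w)\leq D|\sr|$. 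The main obstacle is ensuring that the accumulated corrections $R$ do not inflate the weighted norm of the residual beyond $O(|\sr|)$ as one moves up through the degrees; this is controlled precisely by the first bullet of the lemma applied to the short commutator word just constructed, so the induction closes.
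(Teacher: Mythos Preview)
Your proposal is correct and takes essentially the same route as the paper: induction on $l(w)$ via single BCH steps for the first bullet, and layer-by-layer annihilation using iterated group commutators in powers of the generators (with the first bullet invoked to bound the higher-degree residual) for the second. The paper is more explicit on two points you gloss over: it handles the denominator bookkeeping by assigning a separate integer $a_i$ to each homogeneous degree with the recursive divisibility condition $a_1\cdots a_{i-1}b_i\mid a_i$ (a single $C$ is not obviously stable under iterated BCH since brackets of $\tfrac{1}{C}$-integral elements land in $\tfrac{1}{C^2}\ZZ$), and since an arbitrary integer $m$ need not factor as $p_1\cdots p_d$ with all $|p_j|\approx|m|^{1/d}$, it instead writes $m$ in base $n\approx|\sr|$ as a sum of at most $s$ terms of the form $an^i$ with $0\le a<n$, each of which \emph{is} such a product.
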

\begin{proof}
For the first item, one shows by induction on $l(w)$ that $w=e^{\sum_1^{s} \frac{1}{a_i}\sr_i}$, where $\sr_i \in \cF_{k,s}(\ZZ)$ is homogeneous of degree $i$ and $\|\sr_i\|^{\frac{1}{i}} \leq D \cdot l(w)$.  Here the $a_i$'s are positive integers which are fixed once and for all and independent of $w$. One picks the $a_i$ recursively in such a way that $a_1\ldots a_{i-1} b_i$ divides $a_i$ for each $i$, where $b_i$ is the lcm of the denominators of the coefficients appearing in front of the brackets of order at most $i$ in the Campbell-Baker-Hausdorff formula. The main point to observe is that due to this choice of $a_i$'s if $w=e^{\sum_1^{s} \frac{1}{a_i}\sr_i}$ for some $\sr_i$'s then $e^{\sx_j}w$ remains of this form when applying the Campbell-Baker-Hausdorff formula.

The second item is proved by induction on the nilpotency class $s$. Suppose it holds when the nilpotency class is $<s$, then write $\sr=\sr_{<s}+\sr_s$ the homogeneous components of order $<s$ and order $s$ respectively. From the induction hypothesis there is a word $w_{<s}$ of length $\leq D_{s-1}|\sr|$ such that for $C=C_{s-1}$, $e^{C\sr_{<s}}=w_{<s}$ modulo commutators of order $s$ in $F_{r,s}$. It then follows from the first item that $e^{C\sr_{<s}}=w_{<s}e^{\frac{1}{C}\sr'_s}$, where $\sr'_s$ is homogeneous of order $s$ and in $\cF_{k,s}(\ZZ)$. Moreover $|\sr'_s|\leq D_s l(w_{<s}) \leq D_s^2 |\sr|$.  Since $\sr_{<s}$ and $\sr'_s$ commute, we have $e^{C^2\sr}=w_{<s}^Ce^{\sr'_s+C^2\sr_s}$. It thus suffices to prove the assertion in the case when $\sr=\sr'_s$ is homogeneous of degree $s$. This follows from two observations. First if $c$ is a commutator of order $s$ in $\cF_{k,s}$ with letters $\sx_j$, then $e^c$ coincides with the group commutator of the same form in the letters $e^{\sx_j}$'s. Denote by $n$ the least integer greater than $|\sr|$. The second remark is that every positive integer $m < n^s$ can be written in base $n$ as a sum of at most $s$ terms of the form $an^{i}$ for some integers $i=0,\ldots,s-1$ and $a \in [0,n-1]$. This shows that for every commutator shape $c(\sx_1,\ldots,\sx_k)$ one can write $e^{mc}$ as a product of at most $s$ group commutators in the variables $(e^{\sx_j})^\ell$, where $|\ell|\leq n$. The result follows immediately.

\end{proof}

For future reference, we record the following observation, which was used in the proof: if $\sr$ in the second item of Lemma \ref{lawtoword} is a bracket commutator of order $s$ and shape $c(\sx_1,\ldots,\sx_k)$ (so that it belongs to $\cF_{k,s}^{[s]}$, the ideal of homogeneous elements of degree $s$ in $\cF_{k,s}$), then the associated word $w \in F_k$ given by the Lemma can be taken to be the commutator of order $s$ with exactly the same shape (e.g. if $\sr(\sx_1,\sx_2)=[\sx_1,[\sx_1,\sx_2]]$, then $w(g_1,g_2):=(g_1,(g_1,g_2))$ works, where $(a,b)=aba^{-1}b^{-1}$ is the group commutator). This is a simple consequence of the Campbell-Baker-Hausdorff forumla (see after Prop. \ref{prop:exp Sur} above). Conversely, given a commutator word $w$ of order $s$, we have $w(e^{\sx_1},\ldots,e^{\sx_k})=e^{\sr(\sx_1,\ldots,\sx_k)}$ for all $\sx_i$'s in $\cF_{k,s}$, where $\sr$ is the bracket commutator of the same shape.

In particular, if $\sr$ belongs to $\cF_{k,s}^{(i)}$, the $i$-th term in the central descending series of $\cF_{k,s}$, then $w$ can be chosen to belong to $F_k^{(i)}$, the $i$-th term in the central descending series of $F_k$. Similarly, if $\sr$ belongs to $D^{(i)}(\cF_{k,s})$, the $i$-th term in the derived series of $\cF_{k,s}$, then $w$ can just as well be taken to belong to the $i$-th term in the derived series of $F_k$. And conversely the same holds in the opposite direction, i.e. given $w$ in $F_k^{(i)}$ (resp. $D^{(i)}(F_k)$), we may find $\sr$  in $\cF_{k,s}^{(i)}$ (resp. $D^{(i)}(\cF_{k,s})$) satisfying the conclusion of the first item in Lemma \ref{lawtoword}.

Lemma \ref{lawtoword} is fairly standard. We added a proof for the reader's convenience. More information on this topic and on the geometry of nilpotent groups, is contained in Tits' appendix to \cite{gromov} and in the second author's paper \cite[\S 2]{breuillard}.

\bigskip

If $\fg$ is an arbitrary real nilpotent Lie algebra with nilpotency class $\leq s$, for each $\bX=(X_1,\dots,X_k)\in \fg^k$, there is a unique Lie algebra homomorphism $\theta_\bX:\cF_{k,s}\rightarrow \fg$ such that for each $i$, $\theta_\bX(\sx_i)=X_i$.

\begin{definition}
The \emph{ideal of laws on $k$ letters on $\fg$} is defined to be
$$\cL_{k,s}(\fg):=\bigcap_{\bX\in\fg^k} \ker\theta_\bX.$$
\end{definition}

The quotient $\cF_{k,s}/\cL_{k,s}(\fg)$ is the relatively free Lie algebra in the variety of $k$-generated Lie algebras associated to $\fg$.

\begin{prop}
The set $\cL_{k,s}(\fg)$ is a fully invariant ideal of $\cF_{k,s}$, i.e. $\cL_{k,s}(\fg)$ is an ideal of $\cF_{k,s}$, and it is stable under all Lie algebra endomorphisms of $\cF_{k,s}$. Conversely, if $\cL$ is a fully invariant ideal of $\cF_{k,s}$ there exists a nilpotent Lie algebra $\fg$ with nilpotency class $\leq s$ such that $\cL_{k,s}(\fg)=\cL$.
\end{prop}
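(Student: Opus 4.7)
The plan is to prove both directions by the standard argument that $\cL_{k,s}(\fg)$ is, by construction, the ``kernel of the universal homomorphism'' and then identify $\fg$ for a given $\cL$ as the relatively free algebra $\cF_{k,s}/\cL$.

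First I would establish that $\cL_{k,s}(\fg)$ is an ideal: it is the intersection of the kernels of the Lie algebra homomorphisms $\theta_\bX$, and each $\ker\theta_\bX$ is an ideal, so the intersection is too. For full invariance, I would take any Lie algebra endomorphism $\phi\colon\cF_{k,s}\to\cF_{k,s}$, any $\sr\in\cL_{k,s}(\fg)$, and any $\bX=(X_1,\dots,X_k)\in\fg^k$. The key observation is that $\theta_\bX\circ\phi$ is itself a Lie algebra homomorphism $\cF_{k,s}\to\fg$, and by the universal property of the free nilpotent Lie algebra it is determined by the images of the generators; thus $\theta_\bX\circ\phi=\theta_\bY$ where $Y_i:=\theta_\bX(\phi(\sx_i))\in\fg$. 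Hence $\theta_\bX(\phi(\sr))=\theta_\bY(\sr)=0$, which gives $\phi(\sr)\in\cL_{k,s}(\fg)$.

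For the converse, given a fully invariant ideal $\cL\subseteq\cF_{k,s}$, the natural candidate is $\fg:=\cF_{k,s}/\cL$, which is a Lie algebra nilpotent of class $\leq s$. Let $\pi\colon\cF_{k,s}\to\fg$ be the projection. For the inclusion $\cL_{k,s}(\fg)\subseteq\cL$, I would evaluate on the specific $k$-tuple $\bar\bX=(\pi(\sx_1),\dots,\pi(\sx_k))$: here $\theta_{\bar\bX}=\pi$, so $\sr\in\cL_{k,s}(\fg)$ forces $\pi(\sr)=0$, i.e.\ $\sr\in\cL$.

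For the reverse inclusion $\cL\subseteq\cL_{k,s}(\fg)$, I would exploit the full invariance of $\cL$. Pick any $\sr\in\cL$ and any $\bX=(X_1,\dots,X_k)\in\fg^k$; lift each $X_i$ to some $\tilde X_i\in\cF_{k,s}$ and define an endomorphism $\phi\colon\cF_{k,s}\to\cF_{k,s}$ by $\phi(\sx_i)=\tilde X_i$ (using again the universal property). Then $\theta_\bX=\pi\circ\phi$, and full invariance of $\cL$ yields $\phi(\sr)\in\cL$, hence $\theta_\bX(\sr)=\pi(\phi(\sr))=0$. Since $\bX$ was arbitrary, $\sr\in\cL_{k,s}(\fg)$.

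There is no substantive obstacle here; the only point to watch is the correct invocation of the universal property of $\cF_{k,s}$ (any map from the free generators $\{\sx_i\}$ to a nilpotent Lie algebra of class $\leq s$ extends uniquely to a Lie algebra homomorphism), which is precisely what makes both the ``pullback'' identity $\theta_\bX\circ\phi=\theta_\bY$ and the construction of the lifting $\phi$ legitimate.
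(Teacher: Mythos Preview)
Your proof is correct and follows essentially the same route as the paper's. The paper phrases the converse slightly more abstractly (asserting that for any ideal $\cL$, the ideal of laws of $\cF_{k,s}/\cL$ is the smallest fully invariant ideal containing $\cL$), but your explicit verification of the two inclusions $\cL_{k,s}(\fg)\subseteq\cL$ and $\cL\subseteq\cL_{k,s}(\fg)$ is exactly what underlies that claim.
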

\begin{proof}
By definition, $\cL_{k,s}(\fg)$ is the intersection of all ideals $\ker\theta_\bX$, so it is itself an ideal. Let $\varphi$ be a Lie algebra endomorphism of $\cF_{k,s}$. For each $i$ in $\{1,\dots,k\}$, denote $\sr_i=\sr_i(\sx_1,\dots,\sx_k)$ the image of $\sx_i$ under $\varphi$. For $\bX\in\fg^k$, with a slight abuse of notation, we set $\varphi(\bX):=(\sr_1(\bX),\dots,\sr_k(\bX))$. The homomorphism of Lie algebras $\theta_\bX\circ\varphi:\cF_{k,s}\rightarrow\fg$ maps each $\sx_i$ to $\sr_i(\bX)$ and therefore,
$$\theta_\bX\circ\varphi = \theta_{\varphi(\bX)}.$$
This shows that $\cL_{k,s}(\fg)$ is stable under $\varphi$.\\
Conversely, if $\cL$ is any ideal of the free Lie algebra $\cF_{k,s}$, the ideal of laws in $k$ letters of the quotient Lie algebra $\cF_{k,s}/\cL$ is the smallest fully invariant ideal of $\cF_{k,s}$ containing $\cL$. In particular if $\cL$ is a fully invariant ideal of $\cF_{k,s}$, then the ideal of laws of the quotient Lie algebra $\cF_{k,s}/\cL$ is exactly $\cL$.
\end{proof}

\begin{definition}
The \emph{ideal of rational laws on $k$ letters on $\fg$} is defined to be the real vector space $\cL_{k,s}(\fg)_\QQ$ generated by $\cL_{k,s}(\fg)\cap\cF_{k,s}(\QQ)$. It is also a fully-invariant ideal of $\cF_{k,s}$.
\end{definition}

Denoting  by $(\fg,*)$ the simply connected Lie group associated to the Lie algebra $\fg$, we have the following.

\begin{prop}\label{groupofwordmaps}
The group of word maps $F_{k,G}$ naturally embeds into two Lie groups:
\begin{itemize}
\item It is a finitely generated subgroup of $(\cF_{k,s}/\cL_{k,s}(\fg),*)$.
\item It is a lattice in $(\cF_{k,s}/\cL_{k,s}(\fg)_\QQ,*)$.
\end{itemize}
\end{prop}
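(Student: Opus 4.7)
The idea is to realize both embeddings as projections of the single ambient inclusion $F_{k,s}\hookrightarrow(\cF_{k,s},*)$ through the two successive quotients by $\cL_{k,s}(\fg)_\QQ\subseteq\cL_{k,s}(\fg)$, and then to identify each kernel with $R_{k,s}(G)$ via the group-law / Lie-algebra-law dictionary of Lemma~\ref{lawtoword}.

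First I would translate the two notions of law. By Lemma~\ref{lawtoword}, every $w\in F_{k,s}$, viewed as an element of $(\cF_{k,s},*)$, has the form $w=e^{\frac{1}{C}\sr}$ for some $\sr\in\cF_{k,s}(\ZZ)$. Word evaluation commutes with the Lie-group morphism $\theta_\bX:(\cF_{k,s},*)\to(\fg,*)$ induced by the universal property, so $w$ is a law of $G$ if and only if $\sr$ vanishes under every such $\theta_\bX$, i.e.\ if and only if $\sr\in\cL_{k,s}(\fg)$. Composing the inclusion with $\cF_{k,s}\twoheadrightarrow\cF_{k,s}/\cL_{k,s}(\fg)$ thus gives a group homomorphism $F_{k,s}\to(\cF_{k,s}/\cL_{k,s}(\fg),*)$ with kernel exactly $R_{k,s}(G)$, hence the desired injection $F_{k,G}\simeq F_{k,s}/R_{k,s}(G)\hookrightarrow(\cF_{k,s}/\cL_{k,s}(\fg),*)$; since $F_{k,s}$ is $k$-generated so is its image, proving the first item.

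For the second item I would consider the same construction with the further quotient by $\cL_{k,s}(\fg)_\QQ$. Injectivity will follow from the identity
\[
F_{k,s}\cap\cL_{k,s}(\fg)_\QQ \;=\; F_{k,s}\cap\cL_{k,s}(\fg) \;=\; R_{k,s}(G),
\]
which in turn rests on two elementary facts: Lemma~\ref{lawtoword} forces $F_{k,s}\subset\frac{1}{C}\cF_{k,s}(\ZZ)\subset\cF_{k,s}(\QQ)$, and the definition of $\cL_{k,s}(\fg)_\QQ$ as the real span of its rational points gives $\cL_{k,s}(\fg)_\QQ\cap\cF_{k,s}(\QQ)=\cL_{k,s}(\fg)\cap\cF_{k,s}(\QQ)$. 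So we get at least an injective homomorphism from $F_{k,G}$ into the quotient Lie group.

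The only real obstacle is showing that the image is in fact a \emph{lattice} (note that the analogous image in $\cF_{k,s}/\cL_{k,s}(\fg)$ typically is not, which is precisely why the rational quotient is introduced). For this I would invoke the standard criterion of Mal'cev / Raghunathan \cite[Ch.~2]{raghunathan} for simply connected nilpotent Lie groups: if $\Lambda$ is a lattice in $N$ and $M\lhd N$ is a closed connected normal subgroup whose Lie algebra is rational with respect to the $\QQ$-structure induced by $\Lambda$, then $\pi(\Lambda)$ is a lattice in $N/M$. In our case $\Lambda=F_{k,s}$ is a lattice in $N=(\cF_{k,s},*)$, the induced $\QQ$-structure on $\cF_{k,s}$ is $\cF_{k,s}(\QQ)$ (again thanks to Lemma~\ref{lawtoword}), and $\cL_{k,s}(\fg)_\QQ$ is $\QQ$-rational with respect to this structure by its very definition. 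The criterion therefore applies and delivers the lattice property, completing the proof.
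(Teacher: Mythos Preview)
Your argument is correct and follows essentially the same route as the paper: both identify the kernel of $F_{k,s}\to(\cF_{k,s}/\cL_{k,s}(\fg),*)$ with $R_{k,s}(G)$ via Lemma~\ref{lawtoword}, and both obtain the lattice property in the rational quotient by appealing to the standard Mal'cev--Raghunathan theory. The only cosmetic difference is that for the second item the paper argues directly (again via Lemma~\ref{lawtoword}) that $R_{k,s}(G)$ is a lattice in $(\cL_{k,s}(\fg)_\QQ,*)$ and then quotes \cite[Lemma~1.16]{raghunathan} for the quotient, whereas you phrase the same fact as ``$\cL_{k,s}(\fg)_\QQ$ is rational with respect to the $\QQ$-structure induced by $F_{k,s}$'' and invoke the corresponding criterion from \cite[Ch.~2]{raghunathan}; these two formulations are equivalent.
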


We defer the proof to later in this section. The discrepancy between $\cF_{k,s}/\cL_{k,s}(\fg)_\QQ$ and $\cF_{k,s}/\cL_{k,s}(\fg)$ lies at the heart of the property of being Diophantine for $G$. We have a natural epimorphism of real Lie algebras:
\begin{equation}\label{thelambdamap}
\Lambda: \cF_{k,s}/\cL_{k,s}(\fg)_\QQ \to \cF_{k,s}/\cL_{k,s}(\fg).
\end{equation}
Before we state the main result of this section we require one more definition.

\begin{definition}(Diophantine subspace)\label{diophantine-subspace} Let $V$ be a finite-dimensional $\QQ$-vector space and choose a norm $\|\cdot\|$ on $V(\RR)$. A real subspace $L$ of $V(\RR)$ is said to be Diophantine in $V$ if there are constants $C,A >0$ such that $$d(v,L) \geq C \cdot \|v\|^{-A}$$
for every $v \in V(\ZZ)\setminus L$.
\end{definition}

\begin{example}(Subspaces defined over a number field)\label{algebraic} Let $K \leq \RR$ be a finite field extension of $\QQ$. If $L$ has a basis made of vectors in $V(K)$, then $L$ is a Diophantine subspace of $V$. Indeed $L$ is also the intersection of the kernels of linear forms $\ell_1,\ldots,\ell_{\codim L}$ on $V$ which are all defined over $K$ and we are left to verify that $|\ell_i(v)| \gg \|v\|^{-A}$ for each $v \in V(\ZZ)\setminus\{\ker \ell_i\}$. This is of course well-known, and can easily be verified from the product formula $h(x)=h(x^{-1})$ and the height bounds $h(xy)\leq h(x)+h(y)$ and $h(x+y)\leq h(x)+h(y)+\log 2$, where $h(x)$ is the absolute Weil height for the algebraic number $x$ (cf. \cite{bombieri-gubler}).
\end{example}

We now have:

\begin{thm}\label{diophantinelaws}
Let $G$ be a connected simply connected nilpotent Lie group with Lie algebra $\fg$ and $k \geq 1$. The following are equivalent:
\begin{enumerate}
\item \label{group} The group $G$ is Diophantine for words in $k$ letters.
\item \label{wordmaps} The group $F_{k,G}$ is Diophantine as a subgroup of $(\cF_{k,s}/\cL_{k,s}(\fg),*)$.
\item \label{ideal} The ideal of laws $\cL_{k,s}(\fg)$ is Diophantine in $\cF_{k,s}$.
\item \label{kernel} $\ker \Lambda$ is Diophantine in $\cF_{k,s}/\cL_{k,s}(\fg)_\QQ$.
\end{enumerate}
\end{thm}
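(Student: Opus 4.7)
The plan is to establish the chain $(1) \Leftrightarrow (2) \Leftrightarrow (4) \Leftrightarrow (3)$, using Proposition \ref{diophantinepub} throughout to translate $(1)$ into condition (PUB$_k$). The first step is to prove the deferred Proposition \ref{groupofwordmaps}: the map $\omega \mapsto \sr_\omega$, where $\sr_\omega \in \cF_{k,s}$ satisfies $\omega(e^{\sx_1}, \ldots, e^{\sx_k}) = e^{\sr_\omega}$, embeds $F_{k,G}$ in $V := (\cF_{k,s}/\cL_{k,s}(\fg), *)$, and realizes $F_{k,G}$ as a lattice in the larger group $V_\QQ := (\cF_{k,s}/\cL_{k,s}(\fg)_\QQ, *)$. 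The injectivity of the embedding is immediate from the definition of $\cL_{k,s}(\fg)$; the lattice claim follows from the fact that $F_{k,s}$ is a lattice in $(\cF_{k,s}, *)$ and that the subgroup $R_{k,s}(G)$ of laws projects to a lattice of $\cL_{k,s}(\fg)_\QQ$.

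For $(1) \Leftrightarrow (2)$: the crucial observation is that $\omega(\bg) = \theta_\bg(\sr_\omega)$ for $\bg \in \fg^k$, so that $\sr \mapsto \sup_{\bg \in B_{G^k}(0,1)} \|\theta_\bg(\sr)\|$ defines a continuous, homogeneous function on $V$. It is a genuine norm, not merely a seminorm, because its vanishing on some $\sr$ would force $\theta_\bg(\sr) = 0$ in a neighborhood of $\bg = 0$, hence for all $\bg$ by analyticity, giving $\sr \in \cL_{k,s}(\fg)$, i.e. $\sr = 0$ in $V$. Being a norm on a finite-dimensional real vector space, it is equivalent to any fixed $\|\cdot\|_V$. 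Combined with the growth estimate $|B_{F_{k,G}}(n)| \asymp n^{\tau_k}$ from Corollary \ref{mapgrowth}, condition (PUB$_k$) then reads $\|\sr_\omega\|_V \gtrsim l(\omega)^{-A_k}$ for all non-trivial $\omega$, which is precisely the Diophantine property of $F_{k,G} \subset V$, with exponent $\beta = A_k/\tau_k$.

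For $(2) \Leftrightarrow (4)$: since $F_{k,G}$ is a lattice in the simply connected nilpotent Lie group $V_\QQ$, its word length is polynomially equivalent to the vector norm $\|\cdot\|_{V_\QQ}$ at the identity (a Mal'cev/Guivarc'h comparison, visible from Lemma \ref{lawtoword}). For compatible choices of left-invariant Riemannian metrics, the epimorphism $\Lambda : V_\QQ \to V$ satisfies $\|\Lambda(\tilde\omega)\|_V \asymp d_{V_\QQ}(\tilde\omega, \ker\Lambda)$. Consequently, the Diophantine condition $\|\Lambda(\tilde\omega)\|_V \gtrsim l(\omega)^{-\beta \tau_k}$ becomes $d(\tilde\omega, \ker\Lambda) \gtrsim \|\tilde\omega\|_{V_\QQ}^{-A}$ for $\tilde\omega$ ranging over $F_{k,G}$, which is commensurable with $V_\QQ(\ZZ)$ via any $\QQ$-structure coming from $\cF_{k,s}(\QQ)$; this is exactly Definition \ref{diophantine-subspace} for $\ker\Lambda \subset V_\QQ$.

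For $(4) \Leftrightarrow (3)$: choose a $\QQ$-linear complement $W$ of $\cL_{k,s}(\fg)_\QQ$ in $\cF_{k,s}$, so that $V_\QQ \cong W$ as real vector spaces with matching $\QQ$-structures. Under this identification, $\ker\Lambda = \cL_{k,s}(\fg) \cap W$ and $\cL_{k,s}(\fg) = \cL_{k,s}(\fg)_\QQ \oplus \ker\Lambda$. Equipping $\cF_{k,s}$ with a Euclidean norm orthogonal with respect to this splitting, a short computation gives
$$d_{\cF_{k,s}}(v, \cL_{k,s}(\fg)) = d_{V_\QQ}(\pi(v), \ker\Lambda)$$
for every $v$, where $\pi$ is the quotient projection. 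Since $\pi$ maps $\cF_{k,s}(\ZZ)$ onto a lattice commensurable with $V_\QQ(\ZZ)$ with $\|\pi(v)\|_{V_\QQ} \leq \|v\|_{\cF_{k,s}}$, and since elements of $W \cap \cF_{k,s}(\ZZ)$ serve as canonical lifts of elements of $V_\QQ(\ZZ)$ with preserved norm, the two Diophantine conditions are directly equivalent. The principal difficulty throughout is bookkeeping across the three ambient spaces $V$, $V_\QQ$, and $\cF_{k,s}$; the only genuinely non-linear ingredient is the non-degeneracy of the evaluation supremum norm on $V$ used in the $(1) \Leftrightarrow (2)$ step, after which each remaining equivalence is a purely linear-algebraic translation through the splitting $\cF_{k,s} = \cL_{k,s}(\fg)_\QQ \oplus W$.
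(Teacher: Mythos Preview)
Your proof is correct and follows essentially the same route as the paper's. Both proofs identify $(2)$ with (PUB$_k$) via the observation that the evaluation supremum $\sr \mapsto \sup_{\bg} \|\theta_\bg(\sr)\|$ is a genuine norm on $\cF_{k,s}/\cL_{k,s}(\fg)$, and both reduce $(3) \Leftrightarrow (4)$ to the elementary fact that a real subspace $L$ with $W(\RR) \leq L \leq V(\RR)$ is Diophantine in $V$ iff $L/W$ is Diophantine in $V/W$. The only stylistic difference is in $(2) \Leftrightarrow (4)$: you argue purely linearly via $\|\Lambda(\tilde\omega)\|_V \asymp d_{V_\QQ}(\tilde\omega, \ker\Lambda)$ and the polynomial comparability of word length with the vector norm, whereas the paper writes $w = e^{\su_w} e^{\ell}$ with $\ell \in \ker\Lambda$ and invokes the Campbell--Baker--Hausdorff formula plus Lemma~\ref{lawtoword} to pass between the group and Lie algebra pictures; these are two phrasings of the same estimate.
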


\bigskip

Note that the condition that $F_{k,G}$ be a Diophantine subgroup of $(\cF_{k,s}/\cL_{k,s}(\fg),*)$ (i.e. condition $(2)$ in Theorem \ref{diophantinelaws}) is just a reformulation of (PUB$_k$). Indeed by Lemma \ref{lawtoword} for each word map $\omega \in F_{k,G}$, there is an element $\sr_\omega \in \cF_{k,s}/\cL_{k,s}(\fg)$ such that $\omega(\bg)=e^{\sr_\omega(\log \bg)}$ with $l(\omega) \lesssim \|\sr_\omega\| \lesssim l(\omega)^s$ and $\sup_{B_{G^k}(0,1)} \|\omega(\bg)\| = \sup_{\bx \in B_{\fg^k}(0,1)}\|\sr_\omega(\bx)\|$ is a norm on the Lie algebra $\cF_{k,s}/\cL_{k,s}(\fg)$. All norms are equivalent, so (PUB$_k$) (as formulated in Definition \ref{pubdef}) precisely means that $F_{k,G}$ is Diophantine.\\

We are now ready to prove Proposition \ref{groupofwordmaps} and Theorem \ref{diophantinelaws}.

\begin{proof}[Proof of Prop. \ref{groupofwordmaps}]
As follows from Lemma \ref{lawtoword}, $F_{k,s}$ is a lattice in $(\cF_{k,s},*)$. Moreover the kernel of the natural group homomorphism $F_{k,s} \to (\cF_{k,s}/\cL_{k,s}(\fg),*)$ coincides with $R_{k,s}(G)$. Hence $F_{k,G}=F_{k,s}/R_{k,s}(G)$ is naturally a subgroup of $(\cF_{k,s}/\cL_{k,s}(\fg),*)$.

Similarly Lemma~\ref{lawtoword} implies that $R_{k,s}(G)$ is a lattice in $(\cL_{k,s}(\fg)_\QQ,*)$. This implies (see \cite[Lemma~1.16, page~25]{raghunathan}) that $F_{k,s}/R_{k,s}(G)$ is a lattice in $(\cF_{k,s}/\cL_{k,s}(\fg)_\QQ,*)$.
\end{proof}

\begin{proof}[Proof of Theorem \ref{diophantinelaws}] We just observed that $(2)$ is a reformulation of (PUB$_k$). Hence Proposition \ref{diophantinepub} shows that $(1)$ and $(2)$ are equivalent.

If $W \leq V$ are finite-dimensional $\QQ$-vector spaces and $L$ is a real subspace $W(\RR) \leq L \leq V(\RR)$, then it follows easily from Definition \ref{diophantine-subspace} that $L$ is Diophantine in $V$ if and only if $L/W$ is Diophantine in $V/W$. This yields the equivalence $(3) \Leftrightarrow (4)$.

We now prove the equivalence $(2) \Leftrightarrow (4)$. Let $N:=(\cF_{k,s}/\cL_{k,s}(\fg)_\QQ,*)$ and the discrete lattice $\Gamma:=F_{k,s}/R_{k,s}(G)$ inside it. Let $L:=\ker \Lambda$. We need to prove that $\Gamma$ is Diophantine in $N/L$ if and only if the Lie algebra $\Lie(L)$ is a  Diophantine subspace $\Lie(N)$ in the sense of Definition \ref{diophantine-subspace}. This again follows easily from Lemma \ref{lawtoword} and the Campbell-Baker-Hausdorff (CBH) formula. Indeed, if $\Gamma$ is not Diophantine in $N$, then for every $A>0$ there are words $w$ such that $w(\sx_1,\ldots,\sx_k)=e^{\su_w}e^\ell$, where $\su_w \in \Lie(N)\setminus\{0\}$ is very small, i.e. $\|\su_w\|\ll l(w)^{-A}$, and $\ell \in \Lie(L)$.  By Lemma \ref{lawtoword}, this implies that there are integer points $\sr \in \Lie(N)(\ZZ)$ with $\|\sr\| \ll l(w)^s$ such that $e^{\sr}=e^{C\su_w}e^{C\ell}$  for a constant $C$. Applying the CBH formula, using the fact that $\Lie(L)$ is an ideal, we see that $\|\sr-C\ell\|\ll\|\sr\|^{-A/s}$, showing that $\Lie(L)$ is not Diophantine as a subspace of  $\Lie(N)$. The reverse direction is similar and we omit it.
\end{proof}

\begin{remark}\label{add}
Note that the proof shows that if $\cL_{k,s}(\fg)$ is Diophantine in $\cF_{k,s}$ with exponent $A$, then $G$ satisfies (PUB$_k$) with exponent $A_k=sA$, and therefore, by Remark~\ref{bestexp}, $G$ is Diophantine in $k$ letters with exponent $s+\frac{sA}{\tau_k}$.
\end{remark}

\subsection{Nilpotent Lie groups defined over an algebraic number field}

A real Lie algebra $\fg$ is said to be defined over a proper subfield $K \leq \RR$, if one can find a basis $\{X_i\}_{i=1}^d$ such that the associated structure constants of $\fg$ (that is the numbers $c_{ijk}$ such that $[X_i,X_j]=\sum_{k=1}^d c_{ijk}X_k$) belong to $K$. We will also say that the associated connected simply connected nilpotent real Lie group is defined over $K$ when its Lie algebra is so.


One readily checks that if the $s$-step nilpotent Lie algebra $\fg$ is defined over $K$, then its ideal of laws $\cL_{k,s}(\fg)$ is also defined over $K$. The following is then a direct consequence of Theorem~\ref{diophantinelaws}.

\begin{cor}
If $G$ is a connected simply connected nilpotent Lie group defined over a number field $K$ ($[K:\QQ]<\infty$), then $G$ is Diophantine.
\end{cor}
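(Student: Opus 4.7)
The plan is to reduce the statement to Theorem \ref{diophantinelaws} combined with Example \ref{algebraic}. Recall that $G$ is Diophantine precisely when it is Diophantine for $k$-tuples for every $k \geq 1$. Fixing $k \geq 1$, by the equivalence (1) $\Leftrightarrow$ (3) of Theorem \ref{diophantinelaws}, it suffices to show that the ideal of laws $\cL_{k,s}(\fg) \subseteq \cF_{k,s}$ is a Diophantine subspace in the sense of Definition \ref{diophantine-subspace}.

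The key observation is the one recorded just before the corollary: if $\fg$ is defined over $K$, then so is $\cL_{k,s}(\fg)$. To justify this I would fix a basis $\{Y_1,\ldots,Y_d\}$ of $\fg$ with structure constants $c_{ijl} \in K$ and a Hall basis of $\cF_{k,s}$, which is naturally defined over $\QQ$. An element $\sr \in \cF_{k,s}$ with coordinates $(\alpha_\nu)$ in the Hall basis evaluates, under $\theta_\bX$, to a polynomial expression in the coordinates of $\bX \in \fg^k$ in the basis $\{Y_j\}$, whose coefficients depend linearly on the $\alpha_\nu$ and polynomially on the $c_{ijl}$. The condition $\theta_\bX(\sr) = 0$ for all $\bX \in \fg^k$ translates to the vanishing of every one of those coefficients, yielding a finite homogeneous linear system in $(\alpha_\nu)$ with coefficients in $K$. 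Hence $\cL_{k,s}(\fg)$ is an $\RR$-subspace of $\cF_{k,s}(\RR)$ cut out by linear forms over $K$, and so admits a basis inside $\cF_{k,s}(K)$.

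With this in hand, Example \ref{algebraic} applies and gives that $\cL_{k,s}(\fg)$ is a Diophantine subspace of $\cF_{k,s}$. Therefore by Theorem \ref{diophantinelaws}, $G$ is Diophantine for $k$-tuples, and since $k$ was arbitrary, $G$ is Diophantine. The only substantive step is the $K$-rationality descent from $\fg$ to $\cL_{k,s}(\fg)$, which is a purely linear-algebraic unpacking of the definitions; the rest is a direct invocation of the machinery already assembled in the paper, so no serious obstacle is expected.
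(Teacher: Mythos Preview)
Your proof is correct and follows exactly the same route as the paper: observe that $\cL_{k,s}(\fg)$ is defined over $K$ whenever $\fg$ is, invoke Example~\ref{algebraic} to conclude it is a Diophantine subspace, and then apply Theorem~\ref{diophantinelaws}. Your additional paragraph unpacking the $K$-rationality of $\cL_{k,s}(\fg)$ is a helpful expansion of what the paper leaves as ``one readily checks.''
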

\begin{proof}
The ideal of laws $\cL_k(\fg)$ is defined over the number field $K$, and therefore, by Example~\ref{algebraic}, it must be Diophantine. Theorem~\ref{diophantinelaws} then implies that the group $G$ is Diophantine.
\end{proof}

\subsection{Connected, but non-simply connected nilpotent Lie groups}

We explain here how the above can be adapted to handle non simply connected connected nilpotent Lie groups $G$ as well.\\

Let $\tilde{G}$ be the universal cover of the connected nilpotent Lie group $G$, so that $G=\tilde{G}/Z$,  where $Z$ is a discrete subgroup of $\tilde{G}$ contained in its center. The group $Z$ is a torsion-free abelian group, say of rank $r$.

A first observation is that the groups of words maps $F_{k,G}$ and $F_{k,\tilde{G}}$ are naturally isomorphic: indeed every law on $G$ must also be a law on $\tilde{G}$ because $Z$ is discrete in $\tilde{G}$. Second we prove the following:

\begin{thm}
Let $G$ be a connected nilpotent Lie group and $\tilde{G}$ its universal cover. Then $G$ is Diophantine on $k$ letters if and only if $\tilde{G}$ is Diophantine on $k$ letters.
\end{thm}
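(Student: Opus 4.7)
The plan is to reduce the statement to the (PUB$_k$) characterization given by Proposition \ref{diophantinepub} applied to $\tilde{G}$. Since $F_{k,G} \cong F_{k,\tilde{G}}$ (as already noted above), for generic $\tilde{\bg} \in \tilde{G}^k$ the subgroup $\tilde{\Gamma}_{\tilde{\bg}}$ satisfies $\tilde{\Gamma}_{\tilde{\bg}} \cap Z = \{1_{\tilde{G}}\}$: indeed $w(\tilde{\bg}) \in Z$ is equivalent to $w \in \ker(F_k \to \Gamma_{p(\tilde{\bg})})$, which for generic $\tilde{\bg}$ coincides with $R_{k,s}(\tilde{G})$ by Lemma~\ref{mapgroup}(3), and elements of $R_{k,s}(\tilde{G})$ already satisfy $w(\tilde{\bg}) = 1_{\tilde{G}}$. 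Consequently $p$ restricts to a group isomorphism $\tilde{\Gamma}_{\tilde{\bg}} \to \Gamma_{p(\tilde{\bg})}$ which induces a length-preserving bijection $B_{\tilde{\Gamma}}(n) \leftrightarrow B_{\Gamma}(n)$.

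For the direction $G$ Diophantine $\Rightarrow \tilde{G}$ Diophantine, the quotient metric gives $d_G(p(\tilde{x}), 1_G) = \inf_{z \in Z} \tilde{d}(\tilde{x}, z) \leq \tilde{d}(\tilde{x}, 1_{\tilde{G}})$, hence $\delta_{\Gamma}(n) \leq \delta_{\tilde{\Gamma}}(n)$ and the Diophantine lower bound on $\delta_\Gamma$ transfers to $\delta_{\tilde{\Gamma}}$ with the same exponent.

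For the harder reverse implication, I adapt the Borel-Cantelli argument from the proof of Proposition~\ref{diophantinepub}. By that proposition, $\tilde{G}$ Diophantine implies (PUB$_k$) for $\tilde{G}$: one has $\|w\|_{B_{\tilde{G}^k}(0,1)} \geq C\, l(w)^{-A}$ for every non-law $w$. Fix $R > 0$; for each non-law $w$ decompose, using the central character of $Z$ and the Campbell-Baker-Hausdorff formula in Lie algebra coordinates,
\[
\{\tilde{\bg} \in B_{\tilde{G}^k}(0,R) : d_G(w(\bg), 1_G) \leq \epsilon\} \subset \bigcup_{z \in Z} \{\tilde{\bg} : \|w(\tilde{\bg}) - \log z\| \leq C' \epsilon\},
\]
and apply the Remez-type estimate (Proposition~\ref{prop:estimates on Polynomial maps}) to each polynomial map $\tilde{\bg} \mapsto w(\tilde{\bg}) - \log z$ of degree $\leq s$. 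By Lemma~\ref{lawtoword}, $w$ has polynomial coefficients of size $O(l(w)^s)$, so only $z \in Z$ with $\|\log z\| \ll l(w)^s$ can contribute, producing at most $O(l(w)^{sr})$ relevant lattice points, where $r$ is the rank of $Z$. Evaluating at $\tilde{\bg} = 0$ yields $\|w - \log z\|_B \geq \|\log z\|$, while the triangle inequality yields $\|w - \log z\|_B \geq \|w\|_B - \|\log z\|$; combining, $\|w - \log z\|_B \geq \|w\|_B/2 \geq (C/2)\, l(w)^{-A}$ uniformly in $z$. The sublevel set estimate then gives $|E_w| \ll l(w)^{sr + A/s}\, \epsilon^{1/s}$. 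Taking $\epsilon = c\, l(w)^{-\tau_k \beta}$ and summing over words grouped in annuli of length $\sim 2^m$ as in the proof of Proposition~\ref{diophantinepub}, the resulting series converges for $\beta$ sufficiently large, and Borel-Cantelli concludes that $G$ is Diophantine for $k$-tuples.

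The main obstacle is controlling the sum over the lattice $Z$ in the reverse direction. The crucial point is the uniform lower bound $\|w - \log z\|_B \geq \|w\|_B / 2$, valid for every $z$, which follows directly from (PUB$_k$) for $\tilde{G}$. This produces only a polynomial-in-$l(w)$ overhead (from counting the relevant $z$'s and shifting by $\log z$), which is harmlessly absorbed by taking $\beta$ sufficiently large in the Borel-Cantelli step.
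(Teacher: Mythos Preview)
Your argument is correct and follows essentially the same route as the paper: Borel--Cantelli, splitting the event over the lattice $Z$, applying the Remez-type estimate to each shifted polynomial map, and counting that at most $O(l(w)^{sr})$ lattice points are relevant. The one genuine difference is in how you control $\|w-\log z\|_B$ from below. The paper separates $z=0$ (handled by (PUB$_k$)) from $z\neq 0$ (handled by the discreteness of $Z$, which gives a constant lower bound independent of $l(w)$). You instead combine the evaluation at $\tilde{\bg}=0$ with the triangle inequality to obtain $\|w-\log z\|_B\geq \|w\|_B/2$ uniformly in $z$, and then invoke (PUB$_k$) once. This is a clean observation; the price is a slightly larger admissible threshold for $\beta$ (you get $\beta>s+\frac{s^2r+A}{\tau_k}$ versus the paper's $\beta>s+\frac{\max\{sr,A_k\}}{\tau_k}$, after a minor arithmetic correction), but that is immaterial for the Diophantine conclusion.
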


\begin{proof} One direction is obvious: if $G$ is Diophantine, then so is $\tilde{G}$. In the converse direction, we use the characterization in terms of the property (PUB$_k$) of Proposition \ref{diophantinepub} and modify the Borel-Cantelli argument used in the proof of this proposition. What needs to be estimated is the Haar measure of the sets $E'_\omega(\beta):=\{\bg \in B_{\tilde{G}^k}(0,R); d(\omega(\bg),Z) < |B_{F_{k,G}}(n)|^{-\beta}\}$. This splits into a union of at most $O(l(\omega)^{rs})$ subsets of the form $E'_\omega(\beta,z):=\{\bg \in B_{\tilde{G}^k}(0,R); \|z^{-1}\omega(\bg)\| < |B_{F_{k,G}}(n)|^{-\beta}\}$ for $z \in Z\setminus\{0\}$. Since $Z$ is discrete, the quantity $\sup_{\bg \in B_{\tilde{G}^k}(0,R)}{\|z^{-1}\omega(\bg)\|}$ is bounded away from $0$ uniformly in $z \neq 0$. Applying the Remez-type inequality (Prop. \ref{prop:estimates on Polynomial maps}) to each polynomial map $\bg \mapsto z^{-1}\omega(\bg)$ and using condition (PUB$_k$) for $\tilde{G}$, we obtain
 $$|E'_\omega(\beta)| \leq |E'_\omega(\beta,0)| + \sum_{z \neq 0} |E'_\omega(\beta,z)| \ll l(\omega)^{-\frac{\beta \tau_k - A_k}{s}} + l(\omega)^{rs} l(\omega)^{-\frac{\beta \tau_k}{s}}$$
 and the series converges as soon as $\beta> s + \frac{\max\{rs, A_k\}}{\tau_k}$. We conclude that $G$ is Diophantine.
\end{proof}







\section{Fully invariant ideals of the free Lie algebra $\mathcal F_k$}
\label{sec:submodules}

In this section, we complete our study of the Diophantine property for nilpotent Lie groups and explain the connection between the Diophantine property for $s$-step nilpotent Lie algebras and the absence of multiplicity in the ideal of laws $\cL_{k,s}$ viewed as a module over $SL_k$. We then complete the proof of the results stated in the introduction.


\subsection{$\mathcal{F}_{k,s}$ as an $SL_k$-module}
Recall that $\cF_k$ denotes the free Lie algebra on $k$ generators and $\cF_{k,s}=\cF_k/\cF_k^{(s+1)}$ the $s$-step free nilpotent Lie algebra. Here $\cF_k^{(i)}$ denotes the $i$-th term of the central descending series of $\cF_k$. The ring $\End \cF_{k,s}$ of Lie algebra endomorphisms of $\cF_{k,s}$ acts naturally on $\cF_{k,s}$, so that $\cF_{k,s}$ has a structure of an $\End \cF_{k,s}$-module. An $\End \cF_k$-submodule of $\cF_{k,s}$ is called a \emph{fully invariant ideal} of $\cF_{k,s}$.

Below we will show that for $k\geq 3$ (resp. $k=2$) for all $s\geq 6$ (resp. $s\geq 7$), there exists a fully invariant ideal of $\cF_{k,s}(\RR)$ which is not Diophantine. By Theorem \ref{diophantinelaws} this will show existence of non-Diophantine nilpotent Lie groups.

The group $SL_k$ acts on $\cF_{k,s}$ by linear substitution of the free variables, and thus embeds naturally in $\End \cF_{k,s}$.

For $s\geq 1$, we denote $\cF_{k}^{[s]}$ the subspace of $\cF_{k,s}$ consisting of homogeneous elements of degree $s$.
Note that $\cF_{k}^{[s]}$ is stable under the action of $SL_k$ and that a vector subspace $V \leq  \cF_{k}^{[s]}$ is invariant under the action of $SL_k$ if and only if it is a fully invariant ideal of $\cF_{k,s}$. So in order to build fully invariant ideals in $\cF_{k,s}$ we can look for $SL_k$-invariant subspaces of $\cF_{k}^{[s]}$.
Our first observation is the following.

\begin{lem}(complete reducibility)\label{qsubmodules}
The $SL_k$-module $\cF_k^{[s]}$ is completely reducible, i.e. there are positive integers $n_i$ such that
$$\cF_k^{[s]}=\bigoplus_{i=0}^l V_i^{n_i},$$
where each $V_i$ is an irreducible highest-weight $SL_k$-module defined over $\QQ$ and $V_i\not\simeq V_j$ if $i\neq j$.
\end{lem}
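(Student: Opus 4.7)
My plan is to proceed in two stages: first, establish complete reducibility of $\cF_k^{[s]}$ as an $SL_k$-module over $\RR$; second, refine the decomposition so that each irreducible summand $V_i$ can be taken to be defined over $\QQ$.

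For the first stage, I would invoke Weyl's complete reducibility theorem for finite-dimensional rational representations of semisimple algebraic groups in characteristic zero. The action of $SL_k$ on the free generators $\sx_1,\dots,\sx_k$ of $\cF_k$ extends, by multilinearity of the Lie bracket, to a polynomial (hence rational) action on each finite-dimensional homogeneous piece $\cF_k^{[s]}$. Weyl's theorem then produces a decomposition of $\cF_k^{[s]}$ into irreducible $SL_k$-submodules. Collecting isomorphic summands together, which is justified by Schur's lemma, yields the form $\bigoplus_i V_i^{n_i}$ with the $V_i$'s pairwise non-isomorphic, and standard highest-weight theory identifies each $V_i$ as a highest-weight module.

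For the second stage, the crucial input is that $SL_k$ is split and semisimple over $\QQ$ and that $\cF_k^{[s]}$ carries an evident $\QQ$-structure $\cF_k^{[s]}(\QQ)$ (the $\QQ$-span of iterated brackets of the $\sx_i$'s), which is preserved by the $\QQ$-algebraic group $SL_k$. By the general theory of rational representations of split reductive groups, for instance through Chevalley's construction of Weyl modules over $\ZZ$, every irreducible rational representation of $SL_k$ with a given highest weight is defined over $\QQ$. The isotypic components $V_i^{n_i}$ are intrinsic submodules, determined by characters, and therefore automatically defined over $\QQ$. Within each isotypic component, choosing a $\QQ$-rational basis of the multiplicity space $\operatorname{Hom}_{SL_k}(V_i,\cF_k^{[s]}(\QQ))$ yields a decomposition of $V_i^{n_i}$ into irreducibles defined over $\QQ$.

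No single step here presents a serious difficulty; the lemma is essentially a repackaging of standard facts from the representation theory of split semisimple algebraic groups. The only point that genuinely requires care is the rationality claim, where one must leverage the splitness of $SL_k$ over $\QQ$ to avoid any extension of scalars when selecting the irreducible summands — this is what will later allow the distinction between $\cL_{k,s}(\fg)$ and $\cL_{k,s}(\fg)_\QQ$ to be analyzed one isotypic component at a time.
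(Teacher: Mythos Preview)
Your proposal is correct and follows essentially the same approach as the paper: the paper's proof consists of a single sentence invoking Weyl's complete reducibility theorem with a reference to Serre's book, while you spell out the same argument in more detail and additionally justify the rationality claim (splitness of $SL_k$ over $\QQ$, the natural $\QQ$-structure on $\cF_k^{[s]}$, and intrinsicness of isotypic components), which the paper leaves implicit.
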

\begin{proof}
This follows Weyl's complete reducibility theorem. See Serre~\cite[Part I, Chapter~6, \S 3]{serrelalg}.
\end{proof}


\subsection{Multiplicity and Diophantine submodules}

We now want to find under which condition $\cF_k^{[s]}(\RR)$, the subspace of $\cF_{k,s}(\RR)$ consisting of homogeneous elements of degree $s$, admits a non-Diophantine $SL_k$-submodule. Say that $\cF_k^{[s]}(\RR)$ is \emph{multiplicity free} if in the decomposition given by the above lemma, $n_i=1$ for all $i$. If not, we say that $\cF_k^{[s]}(\RR)$ admits multiplicity. The following simple observation is key to our proofs:

\begin{lem}\label{lem:mult free is dioph}
Let $k,s\in\mathbb N$.
\begin{enumerate}
\item If $\cF_k^{[s]}(\RR)$ is multiplicity-free, then it has only finitely many $SL_k$-submodules, all of which are defined over $\QQ$, and hence Diophantine.
\item If $\cF_k^{[s]}(\RR)$ admits multiplicity, then it has a non-Diophantine $SL_k$-submodule.
\end{enumerate}
\end{lem}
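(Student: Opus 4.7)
For part (1), I would invoke Schur's lemma to show that in the multiplicity-free case every $SL_k$-submodule $M$ of $\bigoplus_{i=0}^l V_i$ must be a direct sum of a subset of the $V_i$'s. Indeed, each projection $M \to V_j$ is either zero or surjective by irreducibility; a standard induction on the number of surjecting factors, using pairwise non-isomorphism of the $V_i$'s to forbid non-trivial graph submodules (any such graph would yield an isomorphism $V_i \simeq V_j$), shows $M$ actually contains each such $V_j$. Hence there are only $2^{l+1}$ submodules, and each is defined over $\QQ$ by Lemma \ref{qsubmodules}, so each is Diophantine by Example \ref{algebraic}.

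For part (2), I would pick an index $i$ with multiplicity $n_i \geq 2$. Using the $\QQ$-rational version of Lemma \ref{qsubmodules}, I would identify the $V_i$-isotypic component of $\cF_k^{[s]}$ with $V_i \otimes_\RR W$ via a $\QQ$-rational isomorphism, where $W \simeq \RR^{n_i}$ is the multiplicity space endowed with a $\QQ$-structure and trivial $SL_k$-action. For every real line $\ell \subset W(\RR)$, the subspace $V_i \otimes \ell$ is then an $SL_k$-submodule of $\cF_k^{[s]}(\RR)$. The plan reduces to showing that $V_i \otimes \ell$ is Diophantine in $\cF_k^{[s]}$ if and only if $\ell$ is Diophantine in $W$, and then exhibiting a non-Diophantine $\ell$.

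The equivalence is transparent once one fixes a $\QQ$-basis $v_1,\dots,v_d$ of $V_i$ and uses the tensor basis $v_j \otimes e_p$: the $\QQ$-linear projection $\pi: \cF_k^{[s]}(\RR) \to W(\RR)$ picking out the $v_1$-component sends the integer lattice into $W(\ZZ)$, sends $V_i \otimes \ell$ onto $\ell$, and is short for compatible norms, giving one direction; conversely any $w \in W(\ZZ)\setminus\ell$ lifts to the integer vector $v_1 \otimes w$ outside $V_i \otimes \ell$ at distance comparable to $d(w,\ell)$. The construction of a non-Diophantine $\ell$ is classical: take $\alpha \in \RR$ a Liouville number and let $\ell$ be spanned by $e_1 + \alpha e_2$; the integer vectors $q_n e_1 + p_n e_2$ with $|q_n\alpha - p_n| \leq q_n^{-n}$ then approach $\ell$ faster than any polynomial rate, while their norm grows only like $q_n$. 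The main (if modest) obstacle is the careful alignment of $\QQ$-structures, so that tensoring a non-Diophantine line in $W$ with $V_i$ really does produce a non-Diophantine subspace relative to the ambient lattice in $\cF_k^{[s]}$; this is precisely what the $\QQ$-rationality in Lemma \ref{qsubmodules} provides.
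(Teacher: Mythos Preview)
Your proof is correct and follows essentially the same approach as the paper. For part~(1) both arguments observe that in the multiplicity-free case every submodule is a sum of a subcollection of the $V_i$'s (the paper states this without proof, you supply the Schur's-lemma justification), hence $\QQ$-rational and Diophantine via Example~\ref{algebraic}. For part~(2) your tensor/multiplicity-space formulation $V_i\otimes\ell$ with $\ell=\RR(e_1+\alpha e_2)$ for a Liouville $\alpha$ is exactly the paper's graph submodule $L_\lambda=\{x+\lambda\alpha(x):x\in V_1\}$ inside two $\QQ$-rational copies $V_1\oplus V_1'$, phrased more abstractly; the paper dispenses with the ``if and only if'' reduction and simply exhibits the integer vectors $qx+p\alpha(x)$ directly, but the content is the same.
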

\begin{proof}
If $\cF_k^{[s]}(\RR)$ is multiplicity free, then, using notation of Lemma~\ref{qsubmodules}, we see that every $SL_k$-submodule $V$ has the form $V=\bigoplus_{i\in I} V_i$ where $I\subset\{1,\dots,n\}$. This certainly implies that they all are defined over $\QQ$. Example~\ref{algebraic} then shows that they are Diophantine.\\
Conversely, suppose $\cF_k^{[s]}$ admits multiplicity. Without loss of generality, we may assume $n_1\geq 2$ so that $\cF_k^{[s]}$ admits a submodule of the form $V_1\oplus V_1'$, with $V_1\simeq V_1'$, both of them being defined over $\QQ$ as subspaces of $\cF_k$. Fix an isomorphism $\alpha:V_1\rightarrow V_1'$ mapping $V_1\cap\cF_k(\ZZ)$ to $V_1'\cap\cF_k(\ZZ)$. Then choose some Liouville number $\lambda \in \RR$ --- i.e. some number such that the inequalites
\begin{equation}\label{liouville}
0<|\lambda-\frac{p}{q}|\leq q^{-q-1}
\end{equation}
have infinitely many integer solutions in $(p,q)$
--- and define
$$L_\lambda=\left\{x+\lambda\alpha(x):x\in V_1\right\}\subset \cF_k^{[s]}(\RR).$$
This is an $SL_k$-submodule of $\cF_k^{[s]}(\RR)$, which we claim to be non-Diophantine. To see this, take a non zero vector $x\in V_1\cap\cF_k(\ZZ)$, and let, for $p,q\in\ZZ$, $\sr_{p,q}:=qx+p\alpha(x)\in\cF_k^{[s]}(\ZZ)$. Then, for $p,q$ large enough in the set of solutions to (\ref{liouville}), we have
\begin{align}
0 < d(\sr_{p,q},L_\lambda) & \leq \|qx_1+p\alpha(x_1)-q(x_1+\lambda \alpha(x_1))\| \leq |p-q\lambda|\|\alpha(x_1)\| \leq q^{-q}.
\end{align}
As $\|\sr_{p,q}\|\ll q$, this proves what we wanted.
\end{proof}


\subsection{Applications}

As we explain in the appendix,  using Witt's Character Formula for the free Lie algebra, one may determine precisely when the $SL_k$-module $\cF_k^{[s]}$ is multiplicity-free. The conclusion is the following (Theorem~\ref{multfree}):

\begin{thm}
The $SL_k$-module $\cF_k^{[s]}$ is multiplicity-free if and only if $s\leq 5$ when $k\geq 3$ and if and only if $s\leq 6$ when $k=2$.
\end{thm}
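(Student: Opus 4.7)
The approach is to translate the decomposition of $\cF_k^{[s]}$ as an $SL_k$-module into a question about the representation theory of the symmetric group $S_s$, and then apply the Kraskiewicz--Weyman multiplicity formula.

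The first step is Schur--Weyl duality. The multilinear degree-$s$ piece $\mathrm{Lie}(s)$ of the free Lie algebra is a representation of $S_s$ of dimension $(s-1)!$, and there is a canonical isomorphism
\[
\cF_k^{[s]}\;\cong\;\mathrm{Lie}(s)\otimes_{\RR[S_s]}(\RR^k)^{\otimes s}.
\]
By Schur--Weyl, for every partition $\lambda\vdash s$ having at most $k$ parts, the multiplicity of the irreducible $SL_k$-module $V_\lambda$ in $\cF_k^{[s]}$ equals the multiplicity of the Specht module $S^\lambda$ in $\mathrm{Lie}(s)$; partitions with more than $k$ parts contribute nothing. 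Hence the theorem is equivalent to: every Specht module $S^\lambda$ with $\lambda\vdash s$ having at most $k$ (resp.\ at most $2$) parts appears in $\mathrm{Lie}(s)$ with multiplicity $\leq 1$ precisely when $s\leq 5$ (resp.\ $s\leq 6$).

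The second step is to make the multiplicities explicit. By Kraskiewicz--Weyman (to be surveyed in the appendix),
\[
\mathrm{mult}\bigl(S^\lambda,\mathrm{Lie}(s)\bigr)\;=\;\#\{T\in\mathrm{SYT}(\lambda):\mathrm{maj}(T)\equiv 1\pmod{s}\}.
\]
Equivalently, via Frobenius' character, one may write $\mathrm{ch}(\mathrm{Lie}(s))=\frac{1}{s}\sum_{d\mid s}\mu(d)\,p_d^{s/d}$ and expand in Schur functions, which gives the same decomposition and is the most convenient form for the small-$s$ case analysis.

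For the forward direction (multiplicity-freeness), I would expand $\mathrm{ch}(\mathrm{Lie}(s))$ into Schur functions for $s\in\{2,3,4,5\}$ (all shapes) and for $s=6$ (only two-row shapes), observing that every coefficient is $0$ or $1$. This is a finite calculation; e.g.\ for $s=4$ one gets $\mathrm{ch}(\mathrm{Lie}(4))=\frac14(p_1^4-p_2^2)=s_{3,1}+s_{2,1,1}$, and for $s=5$ one gets $\mathrm{ch}(\mathrm{Lie}(5))=\frac15(p_1^5-p_5)=s_{4,1}+s_{3,2}+s_{3,1,1}+s_{2,2,1}+s_{2,1,1,1}$. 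The sanity check $\sum_\lambda m_\lambda\dim S^\lambda=(s-1)!$ pins everything down uniquely.

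For the converse (existence of multiplicity), one must exhibit, for every $s\geq 6$ (resp.\ $s\geq 7$ when $k=2$), a shape $\lambda$ with the prescribed row bound such that $\#\{T\in\mathrm{SYT}(\lambda):\mathrm{maj}(T)\equiv 1\pmod s\}\geq 2$. In the borderline cases I would exhibit explicit witnesses: for $k\geq 3$, $s=6$, the shape $\lambda=(3,2,1)$ (with its sixteen SYTs) is the natural candidate, and for $k=2$, $s=7$, a shape such as $(4,3)$ plays the same role; in each case the major-index statistic is computed by direct enumeration of the tableaux. For general $s$, I would work with an explicit family, e.g.\ $\lambda_s=(s-3,2,1)$ in the at-least-three-row case and $\lambda_s=(\lceil s/2\rceil,\lfloor s/2\rfloor)$ in the two-row case, and extract the count from the $q$-analogue of the hook length formula $f^{\lambda_s}(q)=\sum_{T}q^{\mathrm{maj}(T)}$.

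The main obstacle is precisely this last step: verifying \emph{for every} $s$ that the chosen family of partitions carries at least two tableaux with major index in the correct residue class $\pmod s$. The per-$s$ computation is easy, but a uniform argument requires controlling the distribution of $\mathrm{maj}$ on $\mathrm{SYT}(\lambda_s)$, which is governed by cyclotomic-type factors in $f^{\lambda_s}(q)$. In practice I expect one has to split on the residue class of $s$ modulo small integers, or to use an inductive/asymptotic estimate showing that the number of SYT in the residue class $1\bmod s$ grows with $s$ for the chosen family, so that multiplicity is eventually forced.
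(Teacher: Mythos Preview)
Your forward direction is essentially the paper's: for $s\le 6$ one decomposes $\cF_k^{[s]}$ explicitly and reads off that all multiplicities are $0$ or $1$. The paper does this via Witt's weight-space formula (Theorem~\ref{characterformula}) and tableau counts rather than via the symmetric-function identity $\mathrm{ch}(\mathrm{Lie}(s))=\tfrac{1}{s}\sum_{d\mid s}\mu(d)p_d^{s/d}$, but the content is the same, and your decompositions for $s=4,5$ agree with the paper's.

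The divergence is in the converse direction, and the obstacle you flag is genuine for your route but is bypassed entirely in the paper. The paper does not invoke Kraskiewicz--Weyman here at all. Instead, for every $s\ge 7$ it uses Witt's formula to compute the dimension of a single weight space: $\ell(s-2,2,0,\dots,0)$ equals $\tfrac{s-1}{2}$ if $s$ is odd and $\tfrac{s}{2}-1$ if $s$ is even, hence is $\ge 3$. But a weight supported on only two coordinates can occur only in two-row irreducibles, namely $E^{(s)}$, $E^{(s-1,1)}$, $E^{(s-2,2)}$; the first never occurs in $\cF_k^{[s]}$, the second occurs once (it is the highest weight), and in each of $E^{(s-1,1)}$ and $E^{(s-2,2)}$ the weight $(s-2,2)$ has one-dimensional weight space. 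Subtraction forces $E^{(s-2,2)}$ to appear with multiplicity $\ge 2$. Since $(s-2,2)$ is a two-row shape, this single computation handles $k=2$ and $k\ge 3$ simultaneously, with no residue-class splitting, no asymptotics, and no $q$-analogue machinery. The only case not covered is $s=6$ (where $\ell(4,2)=2$ and the trick gives nothing), and there the paper, like you, does the explicit decomposition: for $k\ge 3$ one finds e.g.\ $(E^{(3,2,1)})^3$, while for $k=2$ the module $\cF_2^{[6]}=E^{(5,1)}\oplus E^{(4,2)}\oplus E^{(3,3)}$ is multiplicity-free.

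So your plan is sound in principle, but the uniform witness you are searching for via major-index statistics is simply $\lambda=(s-2,2)$, and it is detected more cheaply by a weight-space count than by Kraskiewicz--Weyman.
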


This will allow us to derive Theorems~\ref{nondiophantinei} and \ref{metabeliani} announced in the introduction.

\begin{thm}\label{nondiophantine}
Fix an integer $k\geq 3$ (resp. $k=2$).
\begin{enumerate}
\item Every connected nilpotent Lie group of step $s\leq 5$ (resp. $s\leq 6$) is Diophantine on $k$ letters.
\item For every $s\geq 6$ (resp. $s\geq 7$), there are $s$-step nilpotent Lie groups which are not Diophantine on $k$ letters.
\end{enumerate}
\end{thm}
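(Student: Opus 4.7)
The plan is to deduce both parts from Theorem~\ref{diophantinelaws}, which reduces the Diophantine property of a connected nilpotent Lie group $G$ of step $s$ with Lie algebra $\fg$ to the Diophantine property of the ideal of laws $\cL_{k,s}(\fg)$ as a subspace of $\cF_{k,s}$, together with the multiplicity dichotomy in Theorem~\ref{multfree} and Lemma~\ref{lem:mult free is dioph}. The preliminary observation I would record is that $\cL_{k,s}(\fg)$ is always graded: if $\sr=\sum_{i=1}^s\sr_i$ with $\sr_i\in\cF_k^{[i]}$ is a law, then for every $\bX\in\fg^k$ and $t\in\RR$, $\theta_{t\bX}(\sr)=\sum_i t^i\theta_\bX(\sr_i)$ vanishes identically, and identifying coefficients in $t$ forces each $\sr_i$ to be a law. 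Consequently
\[
\cL_{k,s}(\fg)=\bigoplus_{i=1}^{s}\bigl(\cL_{k,s}(\fg)\cap\cF_k^{[i]}\bigr),
\]
and each summand is an $SL_k$-submodule of $\cF_k^{[i]}$.

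For part~(1), assuming $s\leq 5$ (resp. $s\leq 6$ when $k=2$), Theorem~\ref{multfree} yields that $\cF_k^{[i]}$ is multiplicity-free for every $i\leq s$, so Lemma~\ref{lem:mult free is dioph}(1) forces each graded piece $\cL_{k,s}(\fg)\cap\cF_k^{[i]}$ to be defined over $\QQ$. Summing over $i$, $\cL_{k,s}(\fg)$ is itself $\QQ$-rational and hence Diophantine by Example~\ref{algebraic}, and Theorem~\ref{diophantinelaws} concludes that $G$ is Diophantine on $k$ letters. (When $G$ is not simply connected I would reduce to its universal cover via the theorem of the preceding subsection.)

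For part~(2), with $s\geq 6$ (resp. $s\geq 7$ when $k=2$), $\cF_k^{[s]}$ admits multiplicity by Theorem~\ref{multfree}, and Lemma~\ref{lem:mult free is dioph}(2) furnishes a non-Diophantine $SL_k$-submodule $L_\lambda\subsetneq\cF_k^{[s]}$ via a Liouville-number construction. Since $\cF_k^{[s]}$ lies in the center of $\cF_{k,s}$, the subspace $L_\lambda$ is automatically an ideal, and, as observed earlier in the section, an $SL_k$-invariant subspace of the top-degree piece is necessarily fully invariant. I would then set $\fg:=\cF_{k,s}/L_\lambda$: the proposition identifying fully invariant ideals with ideals of laws gives $\cL_{k,s}(\fg)=L_\lambda$, and since $L_\lambda$ is a \emph{proper} subspace of $\cF_k^{[s]}$ the algebra $\fg$ is genuinely $s$-step. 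Theorem~\ref{diophantinelaws} then concludes that the associated simply connected Lie group $(\fg,*)$ is non-Diophantine on $k$ letters. The only real obstacle is Theorem~\ref{multfree} itself, established in the appendix via Witt's character formula and the Kraskiewicz--Weyman formula; the Liouville input enters only through Lemma~\ref{lem:mult free is dioph}(2), and the remainder is a formal bookkeeping of graded pieces.
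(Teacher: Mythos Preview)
Your proof is correct and follows essentially the same path as the paper's: decompose $\cL_{k,s}(\fg)$ into homogeneous $SL_k$-submodules, invoke Theorem~\ref{multfree} and Lemma~\ref{lem:mult free is dioph} to show each piece is $\QQ$-rational when $s\leq 5$ (resp.\ $s\leq 6$), and in the other direction quotient $\cF_{k,s}$ by a Liouville-built non-Diophantine fully invariant ideal in the top degree. Your explicit scaling argument for gradedness and the remark that $L_\lambda\subsetneq\cF_k^{[s]}$ guarantees genuine step $s$ are nice clarifications that the paper leaves implicit.
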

\begin{proof}
We only deal with the case $k\geq 3$, because the case $k=2$ is entirely analogous. Let $G$ be a connected nilpotent Lie group of step $s\leq 5$. From Theorem~\ref{diophantinelaws}, it suffices to show that $\cL_{k,s}=\cL_{k,s}(\fg)$ is Diophantine in $\cF_{k,s}$. Now $\cL_{k,s}$ is a fully invariant ideal of $\cF_{k,s}$ and therefore can be decomposed as
$$\cL_{k,s}=\bigoplus_{r\geq 1}\cL_{k,s}^{[r]},$$
where $\cL_{k,s}^{[r]}$ is the set of elements of $\cL_{k,s}$ of homogeneous degree $r$. For each $r$, $\cL_{k,s}^{[r]}$ is an $SL_k$-submodule of $\cF_{k,s}^{[r]}$. Combining Lemma~\ref{lem:mult free is dioph} and Theorem~\ref{multfree}, we get that for each $r\leq 5$, $\cL_{k,s}^{[r]}$ is defined over $\QQ$. Thus, $\cL_{k,s}$ is defined over $\QQ$ and hence Diophantine. This proves the first part of the theorem in the case $k\geq 3$.

Now let $k\geq 3$ and $s\geq 6$. From Theorem~\ref{multfree} and Lemma~\ref{lem:mult free is dioph}, we may find in $\cF_k^{[s]}(\RR)$ an $SL_k$-submodule $L$ that is non-Diophantine as a subspace of $\cF_{k,s}$. This is a fully invariant ideal of $\cF_{k,s}$ which is not Diophantine. Let $G$ be the connected simply connected Lie group with Lie algebra $\cF_{k,s}/\cL_{k,s}$. Then $G$ is $s$-step nilpotent, and its ideal of laws on $k$ letters is $\cL_{k,s}$ so that by Theorem~\ref{diophantinelaws}, $G$ is not Diophantine.
\end{proof}


The proof of Theorem~\ref{metabeliani} follows similar lines, the only new input is the fact, proved in Lemma \ref{irreducible} below, that the free metabelian Lie algebra is multiplicity-free as an $SL_k$ module.

\begin{thm}\label{metabelian}
Every connected nilpotent metabelian Lie group is Diophantine.
\end{thm}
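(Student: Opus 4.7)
The plan is to reduce to the simply connected case via the theorem asserting that a connected nilpotent Lie group is Diophantine on $k$ letters if and only if its universal cover is, and then to apply Theorem~\ref{diophantinelaws}. Thus it suffices to show that for every $k \geq 1$, the ideal of laws $\cL_{k,s}(\fg)$ is a Diophantine subspace of $\cF_{k,s}$, where $\fg = \Lie(G)$ and $s$ is the nilpotency class of $G$.

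Since $\fg$ is metabelian, $\cL_{k,s}(\fg)$ contains the fully invariant ideal $D_k \leq \cF_{k,s}$ generated by the double commutators $[[\sx_a,\sx_b],[\sx_c,\sx_d]]$. This ideal $D_k$ has a basis formed from iterated brackets of the $\sx_i$'s with rational (in fact integer) coefficients, so it is defined over $\QQ$ as a subspace of $\cF_{k,s}$. As observed in the proof of the equivalence $(3) \Leftrightarrow (4)$ of Theorem~\ref{diophantinelaws}, a real subspace $L$ of a $\QQ$-vector space $V$ containing a $\QQ$-rational subspace $W$ is Diophantine in $V$ if and only if $L/W$ is Diophantine in $V/W$. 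It therefore suffices to prove that the image of $\cL_{k,s}(\fg)$ in the free $s$-step nilpotent metabelian Lie algebra $\cF_{k,s}/D_k$ is Diophantine.

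The key input will be Lemma~\ref{irreducible}, asserting that the free metabelian Lie algebra is multiplicity-free as an $SL_k$-module. Granted this, each homogeneous component of $\cF_{k,s}/D_k$ decomposes, by Lemma~\ref{qsubmodules}, as a direct sum of pairwise non-isomorphic irreducible $SL_k$-modules each defined over $\QQ$. By Schur's lemma, every $SL_k$-submodule of $\cF_{k,s}/D_k$ is then a direct sum of some of these components and hence is itself defined over $\QQ$. Since $\cL_{k,s}(\fg)/D_k$ is a fully invariant ideal, and in particular an $SL_k$-submodule, Example~\ref{algebraic} applies and gives that it is Diophantine. This concludes the proof modulo Lemma~\ref{irreducible}.

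The main obstacle is the multiplicity-freeness statement of Lemma~\ref{irreducible} itself. This should be established via the classical description of the free metabelian Lie algebra: its degree $s$ homogeneous component can be realized as a quotient of $\cF_k^{[1]}\otimes S^{s-1}(\cF_k^{[1]})$ (spanned by right-normed brackets $[\sx_{i_0},[\sx_{i_1},\dots,[\sx_{i_{s-2}},\sx_{i_{s-1}}]\dots]]$ with $\sx_{i_1},\ldots,\sx_{i_{s-1}}$ symmetric), modulo the image of the Jacobi/antisymmetry relation. A direct $SL_k$-character computation using symmetric-function identities, or equivalently the Littlewood--Richardson rule, then allows one to read off the irreducible decomposition and verify that each constituent appears with multiplicity exactly one.
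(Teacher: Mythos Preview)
Your proof is correct and follows essentially the same route as the paper: reduce to checking that $\cL_{k,s}(\fg)$ is Diophantine via Theorem~\ref{diophantinelaws}, use that $\cL_{k,s}(\fg)$ contains the metabelian ideal $\cM_k$ (your $D_k$), and then invoke the $SL_k$-module structure of the free metabelian Lie algebra to conclude that $\cL_{k,s}(\fg)$ is defined over $\QQ$.

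One small point of comparison: the paper's Lemma~\ref{irreducible} actually proves something stronger than multiplicity-freeness, namely that each homogeneous component $\cF_k^{[r]}/\cM_k^{[r]}$ is \emph{irreducible}, isomorphic to $E^{(r-1,1)}$. This immediately gives that $\cL_k^{[r]}$ is either $\cM_k^{[r]}$ or all of $\cF_k^{[r]}$, with no need for the general Schur's-lemma argument you outline (though your argument is of course valid). The paper's proof of this irreducibility is via a Hall basis and a dimension count against the number of semi-standard tableaux of shape $(r-1,1)$, which is more elementary than the Littlewood--Richardson approach you sketch; either works.
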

\begin{proof}
Let $G$ be such a Lie group with nilpotency step $s$ and let $\fg$ be its Lie algebra. Let $\cL_k=\cL_k(\fg)$ the ideal of laws on $k$ letters in $\fg$. Since $G$ is metabelian, for each $r$, $\cL_k^{[r]}$ contains $\cM_k^{[r]}$ (see the notation of Lemma \ref{irreducible}). It then follows from this lemma that $\cL_k^{[r]}$ is equal either to $\cM_k^{[r]}$ or to $\cF_k^{[r]}$. In particular it is defined over $\QQ$. Hence so is $\cL_{k,s}(\fg)$ in $\cF_{k,s}$. By Example \ref{algebraic}, it must then be Diophantine in $\cF_{k,s}$ and Theorem~\ref{diophantinelaws} implies that $G$ is Diophantine.
\end{proof}

\begin{remark} Observe that the non-Diophantine nilpotent Lie groups constructed in Theorem \ref{nondiophantine} in step $s=6$ (or $s=7$ if $k=2$) are solvable of derived length $3$ (indeed $D^3(\cF_k)\subset \cF_k^{(8)}$).
\end{remark}

We can now build a non-Diophantine solvable but not nilpotent group.

\begin{thm}\label{thm:nonDiophgrps}
There exists a non-Diophantine solvable Lie group which is not nilpotent.
\end{thm}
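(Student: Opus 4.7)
The plan is to construct $G$ as a semidirect product $G = \RR \ltimes N$, where $N$ is one of the non-Diophantine connected nilpotent Lie groups produced by Theorem~\ref{nondiophantine}, and $\RR$ acts via a one-parameter group of automorphisms of $N$ that breaks nilpotency while preserving the Liouville structure responsible for non-Diophantineness.

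\emph{Construction of the group.} Fix $k \geq 3$ and take $s = 6$. Following the proof of Theorem~\ref{nondiophantine}, let $\fn = \cF_{k,s}/\cL$ where $\cL$ is a non-Diophantine fully invariant ideal of $\cF_{k,s}$; concretely $\cL = L_\lambda \oplus \cM$ with $L_\lambda \subset \cF_k^{[s]}$ the Liouville subspace attached to some Liouville number $\lambda$. Let $E$ be the grading derivation of $\cF_{k,s}$, defined by $E|_{\cF_k^{[r]}}=r\cdot\mathrm{Id}$. Because $\cL$ is a direct sum of $SL_k$-submodules of the graded pieces $\cF_k^{[r]}$ (Lemma~\ref{qsubmodules}), $E$ stabilises $\cL$ and descends to a derivation $\bar E$ of $\fn$. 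Since $\bar E$ acts as the nonzero scalar $s$ on $\fn^{[s]}$, it is a non-nilpotent derivation. Set $\fg := \RR \bar E \ltimes \fn$ and let $G$ be the corresponding connected simply-connected Lie group. Then $\fg$ is solvable, of derived length at most $1+\mathrm{dl}(\fn)$, and non-nilpotent (since $\mathrm{ad}_\fg(\bar E)$ is an inner, non-nilpotent derivation of $\fg$).

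\emph{Non-Diophantineness of $G$.} The aim is to transfer the bad approximate laws of $N$ to $G$. By the construction of $L_\lambda$, for every integer pair $(p,q)$ with $|p - q\lambda| \leq q^{-q-1}$ the element $\sr_{p,q} = qx + p\alpha(x) \in \cF_k^{[s]}(\ZZ)$ lies within distance $\lesssim q^{-q}$ of the laws $L_\lambda \subset \cL_{k,s}(\fn)$. By Lemma~\ref{lawtoword}, $\sr_{p,q}$ corresponds to a commutator word $w_{p,q} \in F_k^{(s)}$ of length $l_{p,q} \lesssim q$, with $\sup_{\mathbf n \in B_{N^k}(0,1)}\|w_{p,q}^N(\mathbf n)\| \lesssim q^{-q}$. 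Since $w_{p,q}\in F_k^{(s)}$ and $[\fg,\fg]=\fn$, its values on $G^k$ lie in $N$, and writing $\bg = ((t_1,n_1),\dots,(t_k,n_k))$ one computes from the semidirect product structure that $w_{p,q}(\bg)$ is a polynomial in the $n_i$'s (of degree $\leq s$) whose coefficients are exponential-polynomials in the $t_i$'s, reducing at $\mathbf t = \mathbf 0$ to $w_{p,q}^N(\mathbf n)$.

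\emph{The main obstacle}, and the technical heart of the argument, is that this super-polynomial smallness at $\mathbf t = \mathbf 0$ must be promoted to a set of $\mathbf t$'s of positive Lebesgue measure; a naive Lipschitz bound only gives a neighbourhood of $\{\mathbf t=\mathbf 0\}$ of radius $\lesssim q^{-q}$, which has measure shrinking with $q$. To get around this I would exploit the homogeneity of $\sr_{p,q}$ and the fact that $\exp(t\bar E)$ acts by the fixed scalar $e^{st}$ on $\fn^{[s]}$: conjugation by elements of the $\RR$-factor merely rescales the leading (degree-$s$) term of $w_{p,q}(\bg)$ by $e^{s\sum t_i}$, while the lower-degree corrections living in $\fn^{[<s]}$ (coming from the $\bar E$'s inserted inside the bracket) can be absorbed by picking, for each $\bg$ in a bounded open set, a subsequence of Liouville approximants $(p,q)$ adapted to $\mathbf t$. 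Applying the sublevel-set estimate of Proposition~\ref{prop:estimates on Polynomial maps} to the polynomial $\bg\mapsto w_{p,q}(\bg)$ on a ball in $G^k$, together with a Borel--Cantelli argument analogous to the one in Proposition~\ref{diophantinepub}, one obtains a positive-measure set of $k$-tuples in $G^k$ that are not Diophantine. Since $G$ is solvable and non-nilpotent, this proves the theorem.
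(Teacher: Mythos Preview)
Your semidirect-product construction has a real gap at exactly the place you flag as ``the main obstacle,'' and your proposed fix does not close it. When you evaluate a depth-$s$ commutator word $w_{p,q}\in F_k^{(s)}$ on a tuple $\bg=((t_1,n_1),\dots,(t_k,n_k))\in G^k$, the $\RR$-factor does not act merely by a global rescaling $e^{s\sum t_i}$ of the top piece: already at the first bracket, $[(t_1,n_1),(t_2,n_2)]$ has a component $t_1\bar E(n_2)-t_2\bar E(n_1)$ lying in $\fn^{[1]}$, and iterating the commutator keeps producing terms of all degrees $<s$ whose size is of order~$1$ in the $t_i$'s. These are not ``lower-degree corrections'' that can be made small by choosing Liouville approximants --- the pair $(p,q)$ only controls the distance of $\sr_{p,q}$ to $\cL$ inside $\cF_k^{[s]}$, and has no handle whatsoever on the degree-$<s$ contributions coming from the $\bar E$-action. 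So for generic $\mathbf t\neq 0$ there is no reason for $w_{p,q}(\bg)$ to be small. Your concluding invocation of Proposition~\ref{prop:estimates on Polynomial maps} and Borel--Cantelli is also pointing the wrong way: those tools produce \emph{upper} bounds on the measure of sublevel sets, which is what one uses to prove that tuples \emph{are} Diophantine, not the opposite.

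The paper sidesteps this difficulty entirely by a different and much simpler construction. Instead of a semidirect product, it takes a \emph{direct} product $G=M\times N$, where $M$ is any metabelian non-nilpotent Lie group (e.g.\ the $ax+b$ group) and $N$ is a suitable non-Diophantine nilpotent group. The key extra idea is to arrange that the Liouville submodule $\cL$ sits inside $\cM_2^{[s]}=D^2(\cF_2)^{[s]}$ (using Lemma~\ref{irreducible} and the multiplicity in $\cF_2^{[s]}$ for $s\geq 7$), so that the associated words $w_n$ can be taken in $D^2(F_2)$ by the remark after Lemma~\ref{lawtoword}. Then $w_n$ is identically trivial on the metabelian factor $M$, and on $G=M\times N$ one has $w_n(\bg)=(1,w_n^N(\mathbf n))$: the smallness on $N$ transfers to $G$ for \emph{every} $\bg$ in a ball, with no analysis of cross-terms needed. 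If you want to salvage your semidirect-product idea, the moral is the same --- you must choose the almost-law words in a verbal subgroup on which the non-nilpotent part acts trivially; with your grading action that is not available, which is why the direct-product-with-metabelian-factor route is the right one.
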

\begin{proof}
Fix $s\geq 7$. Denote $\cM_2=D^2(\cF_k)$ the second term of the derived series of $\cF_k$. From Theorem~\ref{multfree} in the appendix, we know that $\cF_2^{[s]}$ admits multiplicity. By Lemma~\ref{irreducible}, this implies that in fact $\cM_2^{[s]}$ has multiplicity.
From there, using a Liouville number as in the proof of Lemma~\ref{lem:mult free is dioph}, we build an $SL_2$-submodule $\cL$ of $\cM_2^{[s]}$ and a sequence $(\sr_n)_{n\geq 1}$ of elements of $\cM_2^{[s]}(\ZZ)$ such that $\|\sr_n\|\rightarrow\infty$ and for each $n$,
\begin{equation}\label{closerelation}
0<d(\sr_n,\cL)<\|\sr_n\|^{-\|\sr_n\|^s}.
\end{equation}
By Lemma~\ref{lawtoword}, we may obtain from $(\sr_n)$ a sequence of words in two letters
$$w_n=e^{C\sr_n}\mod e^{\cF_2^{(s+1)}}$$
with $l(w_n)\leq C\cdot\|\sr_n\|^s$. Moreover, given that the $\sr_n$'s are in $\cM_2$, it follows from the remark made right after the proof of Lemma~\ref{lawtoword} that the $w_n$'s can be chosen in $M_2=D^2(F_2)$, the second term of the derived series of the free group $F_2$. This implies in particular that for any metabelian group $M$, all word maps $w_{n,M}$ are trivial.
Now take $M$ any metabelian non-nilpotent Lie group --- e.g. the group of affine transformations of the real line --- and let $N$ be the connected simply connected nilpotent Lie group with Lie algebra $\cF_2/(\cL\oplus\cF_2^{(s+1)})$. Let $G:=M\times N$. The word maps $w_{n,G}$ are trivial on $M\times\{1\}$, and therefore, the bound (\ref{closerelation}) shows that for any $\beta>0$,
$$0<\sup_{\bg\in B_{G^2}(0,1)} d(w_n(\bg),0)\ll 4^{-\beta l(w_n)}\ll |B_{\Gamma_{\bg}}(l(w_n))|^{-\beta}.$$
This shows that $G$ cannot be Diophantine, and of course $G$ is solvable non-nilpotent.
\end{proof}

\section{Concluding remarks}

\subsection{Baire category genericity}
We prove here Theorem \ref{nondiophtuples}. It is well-known that although almost every real number is Diophantine, there is a $G_\delta$-dense set of real numbers which are not. So topological and measure-theoretic genericity are very different notions. For $k$-tuples on nilpotent groups a similar phenomenon takes place:

\begin{prop}\label{baire} Let $G$ be a connected nilpotent real Lie group. If $k> \dim G/[G,G]$, then there is a $G_\delta$-dense set $D$ in $G^k$ of $k$-tuples which are not Diophantine.
\end{prop}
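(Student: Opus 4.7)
The natural plan is to pull back the classical multi-dimensional Liouville construction from the abelianization $G^{ab}=G/[G,G]\cong\RR^d$ to $G^k$, via an iterated-commutator gadget that exploits the identity $\fg^{(s+1)}=0$ in the $s$-step setting.

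Set $d:=\dim G^{ab}$ and write $\pi\colon G\to G^{ab}$ for the projection. Since $k>d$, a standard Dirichlet/Minkowski argument combined with a perturbation shows that
\[
A \;:=\; \bigcap_{N\geq 1}\Big\{\mathbf{v}\in(\RR^d)^k\,:\,\exists\,\mathbf{n}\in\ZZ^k\setminus\{0\},\; |\mathbf{n}|\geq N,\; 0<\big\|\textstyle\sum_i n_i v_i\big\|<|\mathbf{n}|^{-N}\Big\}
\]
is $G_\delta$-dense in $(\RR^d)^k$: each defining open set is dense because $k>d$ yields an approximate integer relation via Dirichlet's theorem, which can then be promoted to a super-polynomial one by slightly perturbing a single coordinate, while keeping it nonzero. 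Since the abelianization $\pi^k\colon G^k\to(\RR^d)^k$ is a surjective submersion and hence an open map, its preimage $(\pi^k)^{-1}(A)$ is $G_\delta$-dense in $G^k$.

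Take $\bg\in(\pi^k)^{-1}(A)$ and let $s$ denote the nilpotency step of $G$. For each $N\geq 1$, pick $\mathbf{n}_N\in\ZZ^k$ with $|\mathbf{n}_N|\geq N$ and $0<\|\epsilon_N\|<|\mathbf{n}_N|^{-N}$, where $\epsilon_N:=\sum_i n_{N,i}\pi(g_i)\in\fg^{ab}$. For indices $(l_1,\ldots,l_{s-1})\in\{1,\ldots,k\}^{s-1}$ to be specified, form the iterated commutator
\[
c_N(\bg) \;:=\; \big[\cdots\big[\big[g_1^{n_{N,1}}\cdots g_k^{n_{N,k}},\,g_{l_1}\big],\,g_{l_2}\big],\ldots,g_{l_{s-1}}\big].
\]
An iterative application of $\log[e^W,e^X]=[W,X]$ modulo $\fg^{(r+2)}$ for $W\in\fg^{(r)}$ implies that after $s-1$ nested brackets all Campbell--Baker--Hausdorff corrections land in $\fg^{(s+1)}=0$. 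Moreover, writing $\log(g_1^{n_{N,1}}\cdots g_k^{n_{N,k}})=\epsilon_N+W_{\geq 2}$ with $W_{\geq 2}\in\fg^{(2)}$, the contribution of $W_{\geq 2}$ to $\log c_N(\bg)$ lies in $\fg^{(2+s-1)}=\fg^{(s+1)}=0$. One therefore obtains the exact identity
\[
\log c_N(\bg) \;=\; (-1)^{s-1}\,\ad(X_{l_{s-1}})\cdots\ad(X_{l_1})(\epsilon_N)\;\in\;\fg^{(s)},
\]
giving $\|c_N(\bg)\|\ll\|\epsilon_N\|<|\mathbf{n}_N|^{-N}$. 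Combined with $l(c_N)\leq 2^{s-1}l(w_{\mathbf{n}_N})+O(1)=O(|\mathbf{n}_N|)$, this shows $\|c_N(\bg)\|<l(c_N)^{-N+O(1)}$, super-polynomial in $l(c_N)$, so $\bg$ is non-Diophantine provided $c_N(\bg)\neq 0$.

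The main obstacle is this nonvanishing requirement. Since $G$ has nilpotency step exactly $s$, one has $\fg^{(s)}\neq 0$, and on an open dense subset of $\bg\in G^k$ the $X_i$'s generate $\fg$, so some choice of $(l_1,\ldots,l_{s-1})$ makes the iterated operator $\ad(X_{l_{s-1}})\cdots\ad(X_{l_1})\colon\fg^{ab}\to\fg^{(s)}$ nonzero, with kernel a proper subspace of $\fg^{ab}$. Because Dirichlet supplies many $\mathbf{n}_N$'s realizing each Liouville level, the direction of $\epsilon_N$ can be steered outside this kernel by appropriate choice of $\mathbf{n}_N$ (and of the indices $l_j$, possibly depending on $N$). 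Intersecting $(\pi^k)^{-1}(A)$ with this open dense condition on $\bg$ produces the desired $G_\delta$-dense set $D\subset G^k$ of non-Diophantine tuples.
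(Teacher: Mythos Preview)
Your approach differs from the paper's, which is worth noting. The paper first isolates a general lemma: if there is a dense subset $D_0\subset G^k$ such that at each $\bg\in D_0$ some nontrivial word map $\omega_\bg\in F_{k,G}$ vanishes exactly, then a $G_\delta$-dense set of non-Diophantine tuples exists (one takes $D=\bigcap_n\bigcup_{\bg\in D_0}\big(B(\bg,e^{-nl(\omega_\bg)})\setminus\omega_\bg^{-1}(1)\big)$). It then exhibits such a $D_0$ by forcing an exact rational relation in $\fg^{ab}$: take tuples where $x_1,\dots,x_{k-1}$ span $\fg/\fg^{(2)}$ and $x_k$ lies in their $\QQ$-span modulo $\fg^{(2)}$, and write down a single commutator identity that vanishes at such a tuple but not identically. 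Your route---pulling back a Liouville $G_\delta$ from $(\fg^{ab})^k$ and lifting via an $(s-1)$-fold commutator---is more direct in spirit, but as written it has a genuine gap at the nonvanishing step.

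The gap is this. You need $c_N(\bg)\neq 0$, i.e.\ $\ad(X_{l_{s-1}})\cdots\ad(X_{l_1})(\epsilon_N)\neq 0$ for some choice of the $l_j$. Varying the $l_j$ only buys you that $\epsilon_N$ must avoid the \emph{common} kernel
\[
K \;=\; \Big\{\,\bar Y\in\fg^{ab}\;:\;[Y,\fg,\dots,\fg]=0\ \text{($s-1$ brackets)}\,\Big\},
\]
which is indeed a proper subspace of $\fg^{ab}$ (else $\fg^{(s)}=0$) and, once the $X_i$ span $\fg^{ab}$, is independent of $\bg$. But $K$ can be nonzero: for $\fg=\RR\times\fg_2$ with $\fg_2$ of step $s$, the $\RR$-factor lies entirely in $K$, so if $\epsilon_N$ points into $\RR\times 0$ then no choice of $l_j$ helps. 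Your set $A$ only guarantees \emph{some} $\mathbf n_N$ with $0<\|\epsilon_N\|<|\mathbf n_N|^{-N}$; it gives no control over the direction of $\epsilon_N$, and the claim that ``Dirichlet supplies many $\mathbf n_N$'s\dots the direction of $\epsilon_N$ can be steered'' is not justified by the definition of $A$. The fix is easy: replace the condition $0<\|\sum_in_iv_i\|$ in the definition of $A$ by $\sum_in_iv_i\notin K$. The resulting set is still open (avoidance of a fixed closed subspace) and still dense (perturb one coordinate $v_j$ with $n_j\neq 0$ so that $\sum_in_iv_i$ becomes any prescribed small vector outside $K$), hence $G_\delta$-dense; pulling back and intersecting with the open dense locus where the $X_i$ span $\fg^{ab}$ then makes your argument go through.
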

For the analogous result on $SU(2)$ see \cite{gamburd-hoory}. In fact $D$ can be chosen so that the $k$-tuples in $D$ are as non-Diophantine as possible, namely the speed of an approximation to $1$ by a word of length $n$ can be arbitrarily fast in $n$. The proof is based on the following lemma. Recall that $F_{k,G}$ is the relatively free group on $k$ generators associated to $G$ (see \S \ref{subsec: word maps}).

\begin{lem}\label{generic}If there is a dense subset $D_0$ of $G^k$ such that for each $\bg \in D_0$ the induced natural map $F_{k,G} \to G$ is not injective, then there is a $G_\delta$-dense set $D$ of non-Diophantine $k$-tuples.
\end{lem}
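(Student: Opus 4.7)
The plan is to exhibit the desired set $D$ as a countable intersection of open dense sets and then to invoke the Baire category theorem. Fix a function $f:\NN\to\RR_{>0}$ decreasing faster than any polynomial, say $f(n)=2^{-n}$, and for each integer $N\geq 1$ define
$$U_N:=\bigcup_{\substack{\omega\in F_{k,G}\setminus\{1\}\\ l(\omega)\geq N}}\bigl\{\bg\in G^k:\ 0<\|\omega(\bg)\|< f(l(\omega))\bigr\}.$$
Each $U_N$ is open, since for a fixed word map $\omega$ the condition $0<\|\omega(\bg)\|<f(l(\omega))$ defines an open subset of $G^k$ (both $\omega$ and $\|\cdot\|$ are continuous). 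The main task is therefore to show that every $U_N$ is dense in $G^k$.

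To show density of $U_N$, fix an arbitrary nonempty open set $V\subset G^k$. By hypothesis there exists $\bg_0\in V\cap D_0$, and hence a nontrivial $\omega_0\in F_{k,G}$ with $\omega_0(\bg_0)=1$. To reach word-length at least $N$, I would replace $\omega_0$ by an appropriate power $\omega_0^m$. This requires $F_{k,G}$ to be torsion-free, which is true since by Proposition \ref{groupofwordmaps} it embeds as a lattice in the simply connected nilpotent Lie group $(\cF_{k,s}/\cL_{k,s}(\fg)_\QQ,*)$. Consequently $\omega_0^m$ is a nontrivial word map vanishing at $\bg_0$. By continuity, there is a neighborhood $W\subset V$ of $\bg_0$ on which $\|\omega_0^m(\bg)\|<f(l(\omega_0^m))$. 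Since $\omega_0^m$ is not identically trivial on $G^k$, its zero set is a proper (closed) analytic subvariety of $G^k$, hence has empty interior; thus some $\bg\in W$ satisfies $\omega_0^m(\bg)\neq 1$, placing $\bg\in V\cap U_N$.

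By Baire's theorem, $D:=\bigcap_{N\geq 1}U_N$ is a $G_\delta$-dense subset of $G^k$. For each $\bg\in D$ there is a sequence of nontrivial word maps $\omega_n\in F_{k,G}$ with $l(\omega_n)\to\infty$ and $\|\omega_n(\bg)\|<f(l(\omega_n))$. Since $\omega_n(\bg)$ lies in the ball $B_{\Gamma_\bg}(l(\omega_n))$ and $|B_{\Gamma_\bg}(n)|$ grows only polynomially in $n$, the super-polynomial decay of $f$ forces $\delta_{\Gamma_\bg}(l(\omega_n))$ to beat any bound of the form $c\cdot|B_{\Gamma_\bg}(l(\omega_n))|^{-\beta}$. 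Thus every $\bg\in D$ generates a non-Diophantine subgroup, which yields the lemma.

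The only nontrivial point I foresee is the passage from ``$\omega_0$ nontrivial'' to ``$\omega_0^m$ of arbitrarily large length, still nontrivial'', for which the torsion-freeness of $F_{k,G}$ (coming from Proposition \ref{groupofwordmaps}) is essential; everything else is continuity of polynomial word maps, a standard Baire argument, and a comparison between word length and the polynomial growth of $\Gamma_\bg$.
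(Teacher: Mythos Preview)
Your proof is correct and follows the same Baire-category strategy as the paper, but the construction of the open dense sets differs. The paper, for each $\bg\in D_0$, fixes one nontrivial word map $\omega_\bg$ with $\omega_\bg(\bg)=1$ and sets $O_{n,\bg}:=B(\bg,e^{-nl(\omega_\bg)})\setminus\omega_\bg^{-1}(1)$, then takes $D=\bigcap_n\bigcup_{\bg\in D_0}O_{n,\bg}$; the smallness of $\omega_{\bg_n}(\bg')$ for $\bg'\in D$ comes from the shrinking radius and continuity of the word map, and the word lengths are forced to diverge a posteriori (otherwise a fixed word would vanish at $\bg'$, contradicting $\bg'\notin\omega^{-1}(1)$). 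You instead force the word length up front by passing to powers $\omega_0^m$, which makes the final verification cleaner but costs you an extra input: the torsion-freeness of $F_{k,G}$, which you correctly extract from Proposition~\ref{groupofwordmaps}. The paper's argument avoids this dependence and is thus slightly more self-contained, while yours has the advantage that the non-Diophantine conclusion for $\bg\in D$ is immediate from the definition of $U_N$ without a case split on the word lengths.
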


\begin{proof} Let $\omega_{\bg} \in F_{k,G} \setminus\{1\}$ be such that $\omega_{\bg}(\bg)=1$. Then $\omega_\bg^{-1}(1)$ is a proper analytic subvariety of $G^k$. In particular for every integer $n \geq 1$, the subset $O_{n,\bg}:=B(\bg,e^{-nl(\omega_{\bg})}) \setminus w_\bg^{-1}(1)$ of the open ball $B(\bg,e^{-nl(\omega_{\bg})})$ is open and its closure contains $\bg$. Set $D:=\cap_{n \geq 1} \cup_{\bg \in D_0} O_{n,\bg}.$
\end{proof}

\begin{proof}[Proof of Prop. \ref{baire}]Note that we may assume that $G$ is simply connected. Let $c \in \cF_{k,s}^{[s]}$ be a commutator of length $s$ not belonging to the ideal of laws $\cL_{k,s}(\fg)$. Let $D_0$ be the set of $k$-tuples $(x_1,\ldots,x_k)$ in $G^k$ such that $(x_1,\ldots,x_{k-1})$ span $\fg$ modulo $\fg^{(2)}$ (via the identification $G\sim \fg$), and such that $x_k$ lies in the $\QQ$-span of $(x_1,\ldots,x_{k-1})$ modulo $\fg^{(2)}$. Since $k>\dim \fg/\fg^{(2)}$ this set is clearly dense in $G^k$. By definition of $D_0$, and using the multi-linearity of $c$, if $\bg \in D_0$, then there are integers $n_i \in \ZZ$ with $n_k \neq 0$ such that $n_kc(x_1,\ldots,x_{k})=\sum_{i=1}^{k-1} n_i c(x_1,\ldots,x_{k-1},x_i)$. However viewed as an element of $\cF_{k,s}$, the quantity
$$r(\sx_1,\ldots,\sx_{k}):=n_kc(\sx_1,\ldots,\sx_{k})-\sum_{i=1}^{k-1} n_i c(\sx_1,\ldots,\sx_{k-1},\sx_i)$$ does not belong to $\cL_{k,s}$ because the sum of the $k-1$ terms on the right hand side does not depend on $\sx_k$, while the term on the left hand side does. By Lemma~\ref{lawtoword} $e^r$ is a word $w$ in the $e^{\sx_i}$'s and it does not vanish entirely on $G^k$ although it vanishes at the point $(x_1,\ldots,x_{k})$. Hence $D_0$ satisfies the requirements of Lemma \ref{generic} and we are done.
\end{proof}

If $G$ is not Diophantine, then a much stronger statement holds:

\begin{prop}\label{nondioph}Let $G$ be a connected nilpotent Lie group, which is non-Diophantine for $k$-tuples. Then there is a word map $\omega \in F_{k,G} \setminus \{1\}$ such that all $k$-tuples $\bg \in G^k$ such that $\omega(\bg) \neq 1$ are non-Diophantine. In particular there is an open dense subset of $G^k$ made of non-Diophantine $k$-tuples.
\end{prop}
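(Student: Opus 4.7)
The plan is to combine the characterization of non-Diophantinity in terms of the failure of (PUB$_k$) (Proposition~\ref{diophantinepub}) with a finite-dimensionality argument for polynomial maps. Since $G$ is non-Diophantine for $k$-tuples, (PUB$_k$) fails, so for each $n\geq 1$ we may select a nontrivial word map $\omega_n \in F_{k,G}\setminus\{1\}$ satisfying $\|\omega_n\|_B \leq l(\omega_n)^{-n}$, where $B = B_{G^k}(0,1)$. Since there are only finitely many word maps of any given length, each nontrivial one having strictly positive sup-norm on $B$, we automatically have $l(\omega_n) \to +\infty$.

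The key observation is that, by Lemma~\ref{polynomialwordmap}, each $\omega_n$ is a polynomial map $\fg^k \to \fg$ of degree at most $s$, and hence lies in a finite-dimensional $\RR$-vector space $P$. Consider the decreasing chain of subspaces $S_N := \Span_\RR\{\omega_n : n \geq N\} \subset P$; by finite-dimensionality, it stabilizes at some $S_{N_*}$. Define $U := \{\bg \in G^k : \omega(\bg) = 0 \text{ for every } \omega \in S_{N_*}\}$. Since the common zero locus of a family of polynomials depends only on its $\RR$-linear span, $U = \bigcap_{n \geq N} \omega_n^{-1}(0)$ for every $N \geq N_*$, and in particular $U \subset \omega_{N_*}^{-1}(0)$.

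The remaining step is to show that every Diophantine tuple $\bg \in G^k$ lies in $U$. Suppose $\bg$ is $\beta$-Diophantine; combining the Bass-Guivarc'h formula~\eqref{BGf} with $\tau(\Gamma_\bg) \leq \tau_k$ yields constants $c', A > 0$ (depending on $\bg$) such that $\|w(\bg)\| \geq c' l(w)^{-A}$ for every $w \in F_{k,G}$ with $w(\bg) \neq 0$. On the other hand, polynomial maps of degree at most $s$ are bounded on any fixed compact set by a constant multiple of their sup-norm on $B$, so $\|\omega_n(\bg)\| \leq C_\bg\, l(\omega_n)^{-n}$ for some $C_\bg > 0$ depending only on $\|\bg\|$. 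For $n > A$ and $l(\omega_n)$ sufficiently large, this upper bound is strictly smaller than $c' l(\omega_n)^{-A}$, forcing $\omega_n(\bg) = 0$. Hence $\omega_n(\bg) = 0$ for every $n \geq N(\bg)$; setting $N := \max(N(\bg), N_*)$ and invoking stabilization gives $\bg \in \bigcap_{n \geq N}\omega_n^{-1}(0) = U$.

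Taking $\omega := \omega_{N_*}$ then yields a nontrivial word map with $\omega(\bg) = 0$ for every Diophantine $\bg$, which is the contrapositive of the first assertion. The ``open dense'' statement follows at once since $\omega \neq 1$ implies $\omega^{-1}(0)$ is a proper real-algebraic subvariety of $G^k$, whose complement is Zariski-open and therefore open dense in the standard topology. The main obstacle is to promote the countable family of (PUB$_k$)-witnesses $(\omega_n)$ to a single word map witnessing non-Diophantinity generically; this is made possible precisely by the stabilization of the decreasing chain of subspaces $S_N$ inside the finite-dimensional space $P$.
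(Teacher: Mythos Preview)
Your proof is correct and takes a genuinely different route from the paper's own argument. The paper works in the ambient Lie group $N=(\cF_{k,s}/\cL_{k,s}(\fg),*)$: it picks a minimal connected subgroup $L\leq N$ such that $\rho(F_{k,G})\cap L$ is non-Diophantine, chooses $\omega$ in that intersection, and then, for each $\bg$ with $\omega(\bg)\neq 1$, extends the evaluation $\phi_\bg:F_{k,G}\to G$ to a Lie group homomorphism $\widetilde{\phi_\bg}:N\to G$ via Malcev rigidity, arguing by minimality of $L$ that $\phi_\bg(\Gamma)$ must remain non-Diophantine. This requires an auxiliary lemma comparing word metrics on a subgroup and the ambient group.

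Your argument bypasses all of this. You exploit only two elementary facts: that word maps form a subset of a \emph{finite-dimensional} space of polynomial maps, and that the sup-norm on the unit ball is a genuine norm there (hence comparable to the sup-norm on any larger ball). The stabilisation of the spans $S_N$ is the key trick: it converts the tail condition ``$\omega_n(\bg)=0$ for all large $n$'' into membership in a fixed algebraic set $U$, which is contained in $\omega_{N_*}^{-1}(0)$ for a single word map. This is shorter and more self-contained; the paper's approach, on the other hand, yields more structural information (it identifies a specific finitely generated subgroup $\Gamma\leq F_{k,G}$ whose image in $G$ is non-Diophantine whenever $\omega(\bg)\neq 1$).

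One small technical remark: to guarantee $l(\omega_n)\to\infty$ you should take the witnesses with strict inequality $\|\omega_n\|_B< l(\omega_n)^{-n}$ (or insert the factor $1/n$ as in the proof of Proposition~\ref{diophantinepub}), since with $\leq$ and $l(\omega_n)=1$ the bound is vacuous. With strict inequality, length-$1$ word maps (the generators and their inverses, which have sup-norm exactly $1$ on $B$) are already excluded, and then your finiteness argument goes through.
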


Before giving the proof we make the following observation.

\begin{lem}Let $F$ be a finitely generated subgroup of a connected $s$-step nilpotent Lie group $G$ and $\Gamma\leq F$ a subgroup. If  $\min \{d(x,1)\,;\, x \in \Gamma \cap B_F(n)\setminus\{1\}\}\ll n^{-A}$ for all $A>0$, then $\Gamma$ is non-Diophantine in $G$.
\end{lem}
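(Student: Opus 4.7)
The plan is to convert the hypothesis, which concerns the word ball of $F$, into a statement about the word ball of $\Gamma$ with respect to its own generating set. First, since $F$ is a finitely generated nilpotent group, every subgroup $\Gamma\le F$ is again finitely generated (a standard consequence of the fact that finitely generated nilpotent groups are polycyclic); so $\Gamma = \langle T\rangle$ for some finite set $T$ and the word length $|\cdot|_\Gamma$ is well-defined. The growth exponent $\tau_\Gamma$ of $\Gamma$ is finite by the Bass--Guivarc'h formula \eqref{BGf}.

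The key technical input is the following polynomial distortion inequality: there exist constants $C>0$ and $D\in\NN$ such that
$$|x|_\Gamma \leq C\cdot |x|_F^D \qquad \text{for every } x\in\Gamma.$$
This is a standard fact about subgroups of finitely generated nilpotent groups (see e.g.\ Osin's work on subgroup distortion). One way to see it is to fix a Malcev basis $X_1,\dots,X_N$ of the Malcev completion $\tilde F\supset F$ adapted to the lower central series: every element of $F$ then has unique integer coordinates in this basis, and these coordinates are bounded polynomially in $|x|_F$, with exponent equal to the weight of the coordinate in the lower central series. Choosing a Malcev basis $Y_1,\dots,Y_M$ of the Malcev completion $\tilde\Gamma\le \tilde F$, each $Y_j$ is a polynomial expression in the $X_i$, so any $x\in\Gamma$ has $Y$-coordinates that are polynomially bounded in $|x|_F$; in turn, $|x|_\Gamma$ is polynomially controlled by these $Y$-coordinates, which yields the claimed inequality.

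Granted the distortion bound, the conclusion is a direct chase of inequalities. Given $\beta>0$, choose $A>D\tau_\Gamma\beta$ and apply the hypothesis to produce nontrivial elements $x_n\in\Gamma\cap B_F(n)$ with $d(x_n,1)\ll n^{-A}$. Set $m_n:=|x_n|_\Gamma$, so $m_n \leq C n^D$ by distortion. Moreover $m_n\to\infty$: otherwise the $x_n$ would lie in a fixed finite subset of $\Gamma\setminus\{1\}$ and could not converge to $1$ in $G$. Since $x_n\in B_\Gamma(m_n)\setminus\{1\}$, we obtain
$$\delta_\Gamma(m_n)\cdot |B_\Gamma(m_n)|^\beta \,\leq\, d(x_n,1)\cdot (c\,m_n^{\tau_\Gamma})^\beta \,\ll\, n^{-A+D\tau_\Gamma\beta}\, \longrightarrow\, 0.$$
Hence $\Gamma$ is not $\beta$-Diophantine for any $\beta>0$, i.e.\ it is non-Diophantine in $G$.

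The main obstacle in this approach is the polynomial distortion inequality: although well-known, writing out a self-contained proof requires some careful bookkeeping with Malcev coordinates simultaneously in $F$ and $\Gamma$. Everything else reduces to standard growth estimates.
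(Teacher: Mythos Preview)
Your argument is correct and follows essentially the same route as the paper: reduce the statement to a polynomial distortion bound $\ell_\Gamma(\gamma)\ll \ell_F(\gamma)^D$ for $\gamma\in\Gamma$, and then chase the inequalities exactly as you do. The paper obtains the distortion bound with the explicit exponent $D=s$ by passing to the Malcev closure $N$ of $F$ and the Zariski closure $M$ of $\Gamma$ inside $N$, comparing a norm on $\mathrm{Lie}(N)$ with homogeneous quasi-norms on $N$ and $M$ via the ball-box principle (as in \cite[\S2,\S4]{breuillard}); your Malcev-coordinate sketch is a valid alternative justification of the same fact, though it does not pin down the exponent.
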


\begin{proof}As is well-known every subgroup of a finitely generated nilpotent group is finitely generated, so it makes sense to require that $\Gamma$ be non-Diophantine as a subgroup of $G$, or not. We claim that the word metric on $\Gamma$ is bounded above by a fixed power of the trace on $\Gamma$ of the word metric coming from $F$, namely $\ell_\Gamma(\gamma) \leq O(\ell_F(\gamma)^s)$. Clearly this implies the lemma. To see the claim let $M$ be the Zariski closure of $\Gamma$ in the Malcev closure $N$ of $F$. It is a connected and closed subgroup of $N$ (\cite[II.2.5, II.2.6]{raghunathan}). Given a norm $||x||$ on the Lie algebra of $N$, any homogeneous quasi-norm $|\cdot|_N$ on $N$ satisfies $c|x|_N\leq ||x|| \leq C|x|_N^s$ (\cite[Example 2.3]{breuillard}) for some positive constants $c,C>0$ assuming $||x||\geq 1$ say. Also the same holds for a homogeneous quasi-norm $|\cdot|_M$ on $M$ if $x \in M$. The ball-box principle (see e.g. \cite[Proposition 4.5]{breuillard} applied to the distance on $N$ induced by the word metric on $F$ \cite[Example 4.3 (1)]{breuillard}) tells us that $|\cdot|_N$ and $\ell_F$ (resp. $|\cdot|_M$ and $\ell_\Gamma$) are comparable up to multiplicative and additive constants. The claim follows.
\end{proof}

\begin{proof}[Proof of Proposition \ref{nondioph}] Let $N$ be the simply connected Lie group $(\cF_{k,s}/\cL_{k,s}(\fg),*)$. Recall (see Prop. \ref{groupofwordmaps}) that $F_{k,G}$ is a lattice in $(\cF_{k,s}/\cL_{k,s}(\fg)_\QQ,*)$ and that it embeds into $N$ via the natural Lie homomorphism $\rho: (\cF_{k,s}/\cL_{k,s}(\fg)_\QQ,*) \to N$ (associated to the Lie algebra homomorphism $\Lambda$ of $(\ref{thelambdamap})$). By Theorem \ref{diophantinelaws}, $\rho(F_{k,G})$ is a non Diophantine subgroup of $N$. Let $\Gamma \leq F_{k,G}$ be a subgroup of the form $\Gamma=\rho^{-1}(\rho(F_{k,G}) \cap L)$, where $L$ is a connected subgroup of $N$, which is such that $\rho(\Gamma)$ is non-Diophantine in $L$. Choose $\Gamma$ so that $L$ has minimal possible dimension. Pick $\omega \in \Gamma\setminus \{1\}$.

Now let $\bg \in G^k$ be a $k$-tuple such that $\omega(\bg) \neq 1$. The choice of $\bg$ induces a homomorphism $\phi_\bg: F_{k,G} \to G$, which extends by Malcev rigidity \cite[Theorem~2.11]{raghunathan} to a Lie group homomorphism from $(\cF_{k,s}/\cL_{k,s}(\fg)_\QQ,*)$ to $G$, which factors through $N$ via $\rho$. This yields a Lie group homomorphism $\widetilde{\phi_\bg}: N \to G$. Now the main point is the following simple consequence of the above lemma : if a finitely generated non-Diophantine subgroup of a nilpotent Lie group maps to a Diophantine subgroup under a Lie group homomorphism, then its intersection with the kernel is already non-Diophantine. In our case $\rho(\Gamma)$ is non-Diophantine, so its image in $G$ must also be non-Diophantine unless $\ker \widetilde{\phi_\bg} \cap \rho(\Gamma)$ is non-Diophantine. By minimality of $\dim L$, if this happens, then $L \leq \ker \widetilde{\phi_\bg}$ and so $\phi_\bg(\Gamma)=1$. However this is impossible because we assumed that $\omega(\bg) \neq 1$. We conclude that $\phi_\bg(\Gamma)$ is non-Diophantine in $G$ and hence so is the $k$-tuple $\bg$.
\end{proof}

\subsection{Dependence in $k$ of the Diophantine exponent}\label{dependence} We gather here a few additional remarks about the Diophantine exponent $\beta$, prove Theorem \ref{concludeth} and mention some related open problems. First we have the following simple observation.

\begin{remark} Let $G$ be a connected real Lie group $G$. The set of integers $k \geq 1$ such that $G$ is Diophantine on $k$ letters is an interval $[1,k_0]$, where $k_0$ is either a finite integer (in case $G$ is not Diophantine) or $+\infty$ (in case it is). This is clear given that if a $k$-tuple $(s_1,\ldots,s_k)$ in $G$ is Diophantine, then any subtuple is again Diophantine. The following result is Theorem \ref{everykk} from the introduction. It shows that for any integer $k_0\geq 1$, one may construct a nilpotent Lie group $G$ that is Diophantine on $k$ letters if and only if $k\in[1,k_0]$. We thank A. Gamburd for raising the question of the existence of such a Lie group $G$.
\end{remark}

\begin{thm}\label{everyk}
For any integer $k_0\geq 1$, there exists a connected nilpotent Lie group $G$ such that $G$ is Diophantine for words on $k_0$ letters, but non-Diophantine for words in $k_0+1$ letters.
\end{thm}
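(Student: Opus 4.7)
The strategy is to apply Theorem~\ref{diophantinelaws} together with the Kraskiewicz-Weyman formula to produce a fully invariant ideal with the right properties. Concretely, the plan is to exhibit a partition $\lambda$ of some integer $s$ having \emph{exactly} $k_0+1$ positive parts such that the irreducible $SL_{k_0+1}$-module $V_\lambda$ appears with multiplicity at least $2$ inside $\cF_{k_0+1}^{[s]}$. Granted such a $\lambda$, one picks two $\QQ$-rational copies $V_\lambda^{(1)}, V_\lambda^{(2)} \subset \cF_{k_0+1}^{[s]}$, fixes an $SL_{k_0+1}$-equivariant $\QQ$-isomorphism $\alpha: V_\lambda^{(1)} \to V_\lambda^{(2)}$ sending integer points to integer points, selects a Liouville real number $\theta$, and, mimicking the proof of Lemma~\ref{lem:mult free is dioph}, forms
\[
\cL := \{v + \theta\alpha(v) : v \in V_\lambda^{(1)}\}\subset \cF_{k_0+1}^{[s]}.
\]
Since $\cL$ is an $SL_{k_0+1}$-invariant subspace of the top degree, it is a fully invariant ideal of $\cF_{k_0+1, s}$, and the Liouville argument shows that it is not Diophantine as a subspace of $\cF_{k_0+1, s}$.

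Set $\fg := \cF_{k_0+1, s}/\cL$ and let $G$ be the associated simply connected nilpotent Lie group. Then $\cL = \cL_{k_0+1, s}(\fg)$, and Theorem~\ref{diophantinelaws} immediately yields that $G$ is not Diophantine for $(k_0+1)$-tuples. The crucial feature of $\lambda$ having exactly $k_0+1$ positive parts enters when one wants $G$ to remain Diophantine for $k_0$-tuples. It guarantees that every $GL_{k_0+1}$-weight $\nu$ appearing in $V_\lambda$ has all coordinates strictly positive: a zero coordinate would make the sorted version $\nu^+$ have at most $k_0$ positive entries, and dominance by $\lambda$ would force $|\nu^+|= \nu^+_1 + \cdots + \nu^+_{k_0} \leq \lambda_1 + \cdots + \lambda_{k_0} = s - \lambda_{k_0+1} < s$, contradicting $|\nu^+| = s$. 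Consequently, the substitution $\sx_{k_0+1} \mapsto 0$ annihilates $V_\lambda$, hence annihilates $\cL$; this gives $\cL \cap \cF_{k_0, s} = 0$, where $\cF_{k_0, s} \hookrightarrow \cF_{k_0+1, s}$ denotes the subalgebra on the first $k_0$ generators, and unwinding the definition of the ideal of laws then yields $\cL_{k_0, s}(\fg) = 0$. The latter is trivially Diophantine, so a second application of Theorem~\ref{diophantinelaws} shows that $G$ is Diophantine for $k_0$-tuples.

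The main obstacle is the combinatorial first step: producing a partition $\lambda$ with exactly $k_0+1$ parts whose Specht module occurs with multiplicity at least $2$ in $\mathrm{Lie}_{|\lambda|}$. This does not follow from Theorem~\ref{multfree} alone, which only asserts that $\cF_k^{[s]}$ admits some multiplicity for $s$ sufficiently large; the multiplicity could in principle be concentrated on partitions with fewer than $k_0+1$ parts, which would be inherited by $\cF_{k_0}^{[s]}$ and prevent us from reaching Diophantinity on $k_0$-tuples. The argument should instead go via the Kraskiewicz-Weyman formula, expressing the sought multiplicity as the number of standard Young tableaux of shape $\lambda$ with major index $\equiv 1 \pmod{|\lambda|}$. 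Scaling a fixed $(k_0+1)$-row partition $\lambda$ to $m\lambda$, the number of such tableaux grows super-polynomially in $m$ by the hook length formula, whereas the major index ranges over only $m|\lambda|$ residue classes; an equidistribution-modulo-$n$ statement for the major index on SYT of growing shape (accessible through the Kraskiewicz-Weyman formula and the Klyachko-type identities surveyed in the appendix) then forces the class $1 \pmod{m|\lambda|}$ to contain at least two tableaux for all sufficiently large $m$, producing the required $\lambda$.
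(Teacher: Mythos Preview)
Your overall architecture is identical to the paper's: pick a partition $\lambda$ with exactly $k_0+1$ parts occurring with multiplicity in $\cF_{k_0+1}^{[s]}$, form the Liouville-twisted diagonal submodule $\cL$, set $\fg=\cF_{k_0+1,s}/\cL$, and use the weight argument (all $GL_{k_0+1}$-weights of $E^\lambda$ have strictly positive coordinates when $\lambda$ has $k_0+1$ rows) to conclude $\cL_{k_0,s}(\fg)=0$. That part is fine and matches the paper's proof essentially verbatim; the paper phrases the last step via semi-standard tableaux (Theorem~\ref{dimirr}), but the content is the same.

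The gap is in your combinatorial first step. Your proposed asymptotic argument does not close: knowing that the number of standard tableaux of shape $m\lambda$ grows super-polynomially while there are only $m|\lambda|$ residue classes gives you, by pigeonhole, \emph{some} class with many tableaux, but Kraskiewicz--Weyman requires the class $1\bmod m|\lambda|$ (or any fixed coprime class). The statement that ``all coprime classes have the same count'' is part of the formula, but nothing you have written controls how much mass sits in the non-coprime classes, so the count in class $1$ could in principle remain $0$ or $1$. Your appeal to ``an equidistribution-modulo-$n$ statement for the major index on SYT of growing shape'' is exactly the missing lemma, and it is not proved anywhere in your argument or in the appendix.

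The paper avoids this entirely by taking the explicit partition $\lambda=(2,2,1,\dots,1)$ with $s=k_0+3$ boxes and $k_0+1$ rows, and checking directly (Corollary~\ref{twotwoone}) via Kraskiewicz--Weyman that its multiplicity is at least $2$: one writes down by hand two standard tableaux of the conjugate shape $(s-2,2)$ whose major indices fall in the required residue class. The small cases $k_0=1,2$ are handled separately via Theorem~\ref{nondiophantine}. This replaces your asymptotic step with a two-line explicit verification; I would recommend you do the same rather than trying to salvage the equidistribution route.
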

\begin{proof}
If $k_0=1,2$, we may conclude using Theorem~\ref{nondiophantine}: any connected nilpotent Lie group of nilpotency step 7 (resp. 6) that is non-Diophantine on $2$ (resp. $3$) letters will do.\\
Now assume $k_0\geq 3$. Let $s=k_0+3$. By Corollary~\ref{twotwoone}, the Young diagram with $s$ boxes and $k_0+1$ rows of shape $\lambda:=(2,2,1,\dots,1)$ occurs with multiplicity in $\cF_{k_0+1,s}^{[s]}$. Take $E_1$ and $E_2$ two copies of $E^{\lambda}$ in $\cF_{k_0+1,s}^{[s]}$ defined over $\QQ$, and let
$$\cL = \{ (x,\lambda x)\in E_1 \oplus E_2 ; x\in E_1\},$$
for some Liouville number $\lambda$. Finally, define $G$ to be the connected simply connected nilpotent Lie group with Lie algebra $\cF_{k_0+1,s}/\cL$. The ideal of laws of $G$ over $k_0+1$ letters is just $\cL$, and it is non-Diophantine in $\cF_{k_0+1,s}$. Hence $G$ is non-Diophantine for words on $k_0+1$ letters, by Theorem \ref{diophantinelaws}.\\
We claim that the ideal of laws of $G$ over $k_0$ letters is $\{0\}$, and in particular is Diophantine.  This will show that $G$ is Diophantine for words in $k_0$ letters  by Theorem \ref{diophantinelaws}.  To see the claim, observe that the ideal of laws $\cL_{k_0,s}(Lie(G))$ on $k_0$ letters is homogeneous, so if it is non-zero it must contain a weight vector (for the diagonal action $(\sx_1,\dots,\sx_{k_0}) \to (t_1 \sx_1,\dots,t_{k_0} \sx_{k_0})$), say $\sr:=\sr(\sx_1,\dots,\sx_{k_0})\in \cF_{k_0,s}$, which, being a law of $Lie(G)$, must belong to $\cL$. The weight of $\sr$ is of the form $(u_1,\ldots,u_{k_0})$. However Theorem \ref{dimirr} below shows that the dimension of the subspace of $\cL \simeq E^{\lambda}$ made of vectors of weight $(u_1,\dots,u_{k_0})$ is the number of semi-standard tableaux of shape $\lambda$ having each number $i=1,\dots,k_0$ occurring $u_i$ times. In particular such a tableau has at most $k_0$ distinct entries. But $\lambda$ has $k_0+1$ rows, and entries are strictly increasing in each column of a standard tableau. Hence any semi-standard tableau of shape $\lambda$ must have at least $k_0+1$ distinct entries. This contradiction proves the claim.
\end{proof}

We will prove here two related results. First we show that if $G$ is $s$-step nilpotent and is not Diophantine on $k$ letters for some $k\geq 1$, then it is not Diophantine on $s$ letters already. In other words:

\begin{prop}\label{sversusk} An $s$-step nilpotent Lie group $G$ is Diophantine if and only if it is Diophantine on $s$ letters.
\end{prop}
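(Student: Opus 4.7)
The forward direction ``Diophantine $\Rightarrow$ Diophantine on $s$-tuples'' is immediate from the definition, which requires the property for every $k \geq 1$. For the converse, the plan is to use the characterization provided by Theorem \ref{diophantinelaws}: $G$ is Diophantine on $k$-tuples if and only if $\cL_{k,s}(\fg)$ is a Diophantine subspace of $\cF_{k,s}$. It therefore suffices to show that if $\cL_{s,s}(\fg)$ is Diophantine in $\cF_{s,s}$, then $\cL_{k,s}(\fg)$ is Diophantine in $\cF_{k,s}$ for every $k \geq 1$.

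For $k \leq s$, I would exploit the natural $\QQ$-defined inclusion $\cF_{k,s} \hookrightarrow \cF_{s,s}$ sending $\sx_i$ to the $i$-th generator of $\cF_{s,s}$. A straightforward unwinding of the definitions shows that $\cL_{k,s}(\fg) = \cL_{s,s}(\fg) \cap \cF_{k,s}$: a law on $k$ letters, viewed as an element of $\cF_{s,s}$, is evidently a law on $s$ letters, and conversely any element of $\cF_{s,s}$ that is a law on $s$ letters and uses only the first $k$ variables is a law on $k$ letters. The Diophantine property of subspaces trivially restricts to intersections with $\QQ$-defined subspaces (since $d(v,\cL_{s,s}(\fg)) \leq d(v, \cL_{k,s}(\fg))$ for $v \in \cF_{k,s}(\ZZ)$), which settles this case.

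For $k > s$, the argument proceeds via the $SL_k$-isotypic decomposition. By Weyl's complete reducibility (Lemma \ref{qsubmodules}), each homogeneous piece decomposes over $\QQ$ as
$$\cF_{k,s}^{[r]} = \bigoplus_{\lambda \vdash r} M_\lambda \otimes E^\lambda_k,$$
where $E^\lambda_k$ is the irreducible $SL_k$-module of highest weight $\lambda$ and $M_\lambda$ is a multiplicity space of dimension $m_\lambda$. The crucial stability fact is that any highest-weight vector of weight $\lambda$ is supported on the first $\ell(\lambda) \leq r \leq s$ variables, so $M_\lambda$ may be canonically identified with a $\QQ$-defined subspace of $\cF_{s,s}^{[r]}$ that does not depend on $k$ (as long as $k \geq s$). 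Since $\cL_{k,s}(\fg)^{[r]}$ is $SL_k$-stable, it decomposes compatibly as $\bigoplus_\lambda W_\lambda \otimes E^\lambda_k$ with $W_\lambda \subseteq M_\lambda$, and from the identity $\cL_{k,s}(\fg) \cap \cF_{s,s} = \cL_{s,s}(\fg)$ (proved exactly as in the case $k \leq s$) together with the stability of $M_\lambda$, the subspaces $W_\lambda$ are exactly the ones that arise from decomposing $\cL_{s,s}(\fg)^{[r]}$ inside $\cF_{s,s}^{[r]}$.

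To finish I would record two elementary facts about Diophantine subspaces that follow readily from Definition \ref{diophantine-subspace}: (i) in a $\QQ$-defined direct sum $V_1 \oplus V_2$, a subspace of the form $L_1 \oplus L_2$ is Diophantine if and only if each $L_i$ is Diophantine in $V_i$; (ii) for a $\QQ$-defined finite-dimensional space $E$, a subspace $L \otimes E$ is Diophantine in $M \otimes E$ if and only if $L$ is Diophantine in $M$ (reduce via a $\QQ$-basis of $E$ to case (i)). Applied to the decompositions above, these reduce the Diophantine property of $\cL_{k,s}(\fg)$ in $\cF_{k,s}$ \emph{and} of $\cL_{s,s}(\fg)$ in $\cF_{s,s}$ to the single statement that each $W_\lambda$ is Diophantine in $M_\lambda$, completing the proof. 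The main subtle point of this approach is the $k$-independence of the multiplicity data $(M_\lambda, W_\lambda)$, which rests on the elementary representation-theoretic observation that highest-weight vectors use a bounded number of variables.
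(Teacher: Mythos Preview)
Your argument is correct and reaches the same conclusion, but it takes a genuinely different route from the paper's proof. The paper uses a more elementary combinatorial decomposition: for each subset $B \subseteq \{1,\dots,k\}$ with $|B|\leq s$ it considers the subspace $V_B \leq \cF_{k,s}$ spanned by commutators whose set of occurring letters is exactly $B$. These $V_B$ are $\QQ$-defined, in direct sum, and span $\cF_{k,s}$ (since every commutator of length $\leq s$ uses at most $s$ distinct letters). Because $\cL_{k,s}(\fg)$ is torus-invariant it splits as $\bigoplus_B \cL_{k,s}(\fg)\cap V_B$, and for any $B$ of size exactly $s$ one has $\bigoplus_{B'\subseteq B} V_{B'} \cong \cF_{s,s}$ with $\cL_{k,s}(\fg)$ restricting to $\cL_{s,s}(\fg)$; your direct-sum fact (i) then finishes immediately. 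Your approach via the $SL_k$-isotypic decomposition and the $k$-independence of the highest-weight (multiplicity) spaces $M_\lambda$ is more structural and makes the representation theory do more work; it has the pleasant feature of isolating the ``invariant'' data $(M_\lambda,W_\lambda)$ that governs diophantinity for all $k\geq s$ at once. The paper's support-by-letters argument, on the other hand, is shorter and avoids the care needed to set up the $\QQ$-isomorphism $V_\lambda \cong M_\lambda \otimes E^\lambda_k$ carrying $\cL_{k,s}(\fg)\cap V_\lambda$ to $W_\lambda\otimes E^\lambda_k$. (For $k\leq s$ the paper simply invokes the observation that the set of $k$ for which $G$ is Diophantine on $k$ letters is an initial interval, which is equivalent to your restriction argument.)
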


\begin{proof}Let $k \geq s$. In view of Theorem \ref{diophantinelaws} and the above remark we are left to prove that $\cL_{k,s}$ is Diophantine in $\cF_{k,s}$ if $\cL_{s,s}$ is Diophantine in $\cF_{s,s}$. For each set $B$ of at most $s$ letters amongst $\sx_1,\ldots,\sx_k$ consider the subspace $V_B$ of $\cF_{k,s}$ spanned by the commutators whose set of  letters occurring in them is precisely $B$. The $V_B$'s are in direct sum, span $\cF_{k,s}$, and they decompose further into weight spaces for the diagonal action $(t_1,\ldots,t_k) \cdot c(\sx_1,\ldots,\sx_k):=c(t_1\sx_1,\ldots,t_k\sx_k)$ on $\cF_{k,s}$. The weights occurring in $V_B$ are of the form $(n_1,\ldots,n_k) \in \NN^k$, where $n_i \neq 0$ if and only if $\sx_i \in B$. The fully invariant ideal $\cL_{k,s}$ also decomposes as a direct sum of weight spaces, and $\cL_{k,s} = \oplus_B \cL_{k,s} \cap V_B$. Observe that for each set $B$ of $s$ letters $\oplus_{B' \subset B} V_{B'}$ is isomorphic to $\cF_{s,s}$ and $\oplus_{B' \subset B} \cL_{k,s}  \cap V_{B'}$ to $\cL_{s,s}$. The result is now a direct consequence of the following lemma.
\end{proof}

\begin{lem}Suppose $V$ is a finite dimensional $\QQ$-vector space and $V_i \leq V$ are $\QQ$-subspaces such that $V=\oplus_i V_i$. Let $L\leq V(\RR)$ be a real subspace such that $L=\oplus_i L \cap V_i(\RR)$. Then $L$ is Diophantine in $V(\RR)$ (with exponent $A>0$) if and only if each $L \cap V_i(\RR)$ is Diophantine in $V_i(\RR)$ (with same exponent $A$).
\end{lem}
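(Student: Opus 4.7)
The plan is to handle both directions directly using a norm adapted to the decomposition and a common denominator argument. First I would equip $V(\RR)$ with a norm compatible with the direct sum decomposition $V(\RR)=\bigoplus_i V_i(\RR)$, so that $\|\sum v_i\|\asymp \max_i \|v_i\|$, and let $\pi_i\colon V\to V_i$ denote the $\QQ$-linear projections. Since all norms on a finite dimensional real vector space are equivalent, this does not affect the Diophantine property nor the exponent.

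For the easy direction ($L$ Diophantine with exponent $A$ $\Rightarrow$ each $L\cap V_i(\RR)$ Diophantine with exponent $A$), given $v_i\in V_i(\ZZ)\setminus(L\cap V_i(\RR))$, I would first observe that $v_i\notin L$: indeed, if $v_i$ belonged to $L$, then under the decomposition $L=\bigoplus_j L\cap V_j(\RR)$ it would have $j$-th component $0$ for $j\neq i$, hence equal its $i$-th component, which would force $v_i\in L\cap V_i(\RR)$. Then since $L\cap V_i(\RR)\subseteq L$, we get $d(v_i,L\cap V_i(\RR))\geq d(v_i,L)\geq C\|v_i\|^{-A}$, exactly as required.

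For the nontrivial direction, assume each $L\cap V_i(\RR)$ is Diophantine in $V_i(\RR)$ with exponent $A$. Choose a $\QQ$-basis of $V$ adapted to the decomposition $V=\bigoplus_i V_i$; the change-of-basis matrix from the integral basis defining $V(\ZZ)$ to this basis is rational, so there exists an integer $N\geq 1$ with $N\pi_i(V(\ZZ))\subseteq V_i(\ZZ)$ for every $i$. Now let $v\in V(\ZZ)\setminus L$. Write $v=\sum_i\pi_i(v)$; since $L=\bigoplus_i L\cap V_i(\RR)$, there must exist $i_0$ with $\pi_{i_0}(v)\notin L\cap V_{i_0}(\RR)$. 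Applying the Diophantine hypothesis to $N\pi_{i_0}(v)\in V_{i_0}(\ZZ)$ gives $d(\pi_{i_0}(v),L\cap V_{i_0}(\RR))\gtrsim \|\pi_{i_0}(v)\|^{-A}$, with implicit constants depending on $N$ and on $A$ but not on $v$.

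To finish, I would use the norm compatibility: for any $\ell\in L$, decomposing $\ell=\sum_i\ell_i$ along $V_i(\RR)$ with $\ell_i\in L\cap V_i(\RR)$, one has $\|v-\ell\|\asymp\max_i\|\pi_i(v)-\ell_i\|\geq\|\pi_{i_0}(v)-\ell_{i_0}\|$. Taking the infimum over $\ell\in L$ yields $d(v,L)\gtrsim d(\pi_{i_0}(v),L\cap V_{i_0}(\RR))\gtrsim \|\pi_{i_0}(v)\|^{-A}\gtrsim \|v\|^{-A}$, since $\|\pi_{i_0}(v)\|\leq\|\pi_{i_0}\|\cdot\|v\|$. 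The exponent $A$ is therefore preserved, and the only minor subtlety in the whole argument is the common denominator $N$, which is needed precisely because the $\pi_i$ send $V(\ZZ)$ into $V_i(\QQ)$ but not necessarily into $V_i(\ZZ)$.
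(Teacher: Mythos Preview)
Your proof is correct and follows essentially the same approach as the paper's: the paper's one-line argument is simply ``easily verified since for some choice of norm on $V(\RR)$, $\|v-l\|=\max\{\|v_i-l_i\|\}$,'' which is precisely your norm-adapted-to-the-decomposition strategy. You have additionally spelled out the common-denominator issue (that $\pi_i(V(\ZZ))$ lands in $V_i(\QQ)$ rather than $V_i(\ZZ)$), a technicality the paper silently absorbs into the constant.
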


\begin{proof} This is easily verified since for some choice of norm on $V(\RR)$, $\|v-l\|=\max\{\|v_i-l_i\|\}$.
\end{proof}

It is worthwhile to stress that the argument above shows that if $A=A_s$ is the exponent making $\cL_{s,s}$ Diophantine in $\cF_{s,s}$ (see Definition \ref{diophantine-subspace}), then $\cL_{k,s}$ is again Diophantine with the same exponent as a subspace of $\cF_{k,s}$, for all $k \geq s$. Here is an immediate consequence of this and of the proof of Proposition \ref{diophantinepub}.

\begin{prop}\label{limsupbeta}Let $G$ be a Diophantine $s$-step nilpotent Lie group. Then for every $\beta>s$ there is $k_0\geq 1$ such that $G$ is $\beta$-Diophantine for $k$-tuples for each $k \geq k_0$.
\end{prop}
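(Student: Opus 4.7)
The plan is to combine Proposition~\ref{sversusk} with Remark~\ref{add} and then to check that the growth exponent $\tau_k$ tends to infinity as $k \to \infty$. Since $G$ is Diophantine by hypothesis, Proposition~\ref{sversusk} together with Theorem~\ref{diophantinelaws} (in the form $(1)\Leftrightarrow(3)$) provides an exponent $A_s>0$ such that $\cL_{s,s}(\fg)$ is Diophantine in $\cF_{s,s}$ with exponent $A_s$. The observation recorded immediately after the proof of Proposition~\ref{sversusk}, which uses the weight decomposition of $\cF_{k,s}$ into the subspaces $V_B$, then shows that for every $k\geq s$, $\cL_{k,s}(\fg)$ remains Diophantine in $\cF_{k,s}$ with the \emph{same} exponent $A_s$.

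Next I will apply Remark~\ref{add} to conclude that $G$ is Diophantine on $k$ letters with exponent $s+sA_s/\tau_k$ for every $k\geq s$, where $\tau_k$ denotes the growth exponent of the relatively free group $F_{k,G}$. The only remaining point is to check that $\tau_k\to\infty$ as $k\to\infty$. By the Bass--Guivarc'h formula~(\ref{BGf}),
$$\tau_k \;\geq\; \rrk\bigl(F_{k,G}^{(1)}/F_{k,G}^{(2)}\bigr),$$
and by Proposition~\ref{groupofwordmaps} this rank equals the dimension of the abelianization of the real Lie algebra $\cF_{k,s}/\cL_{k,s}(\fg)_\QQ$, which is $k-\dim\bigl(\cL_{k,s}(\fg)\cap\cF_k^{[1]}\bigr)$. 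A degree-one element $\sum a_i\sx_i$ is a law of $\fg$ only if it vanishes when evaluated on any basis of $\fg$, hence only if all $a_i=0$. Thus the intersection is trivial and $\tau_k\geq k$.

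Given $\beta>s$, it then suffices to choose $k_0$ large enough that $\tau_k>sA_s/(\beta-s)$ for all $k\geq k_0$; for such $k$ the exponent $s+sA_s/\tau_k$ is strictly below $\beta$, and therefore $G$ is $\beta$-Diophantine for $k$-tuples. The substantive content of the argument lies entirely in Proposition~\ref{sversusk} and in the fact that the exponent $A_s$ does not degrade with $k$; once those two ingredients are in hand, no serious obstacle remains, and the conclusion follows by a purely mechanical application of the Bass--Guivarc'h lower bound on $\tau_k$.
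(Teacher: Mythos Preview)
Your proof is correct and follows essentially the same route as the paper's own proof: invoke Remark~\ref{add} to get the exponent $s+sA_k/\tau_k$, use the observation following Proposition~\ref{sversusk} that $A_k\leq A_s$ for $k\geq s$, and conclude by letting $\tau_k\to\infty$. The only difference is that you supply an explicit justification for $\tau_k\to\infty$ (namely $\tau_k\geq k$ via the degree-one part of the ideal of laws being trivial), whereas the paper simply asserts it.
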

\begin{proof} Let $A_k$ be the Diophantine exponent of $\cL_{k,s}(\fg)$ in $\cF_{k,s}$, and $\tau_k$ the growth exponent of $F_{k,G}$ (see $\S$ \ref{subsec: word maps} for notation). By Remark~\ref{add}, the group $G$ is $\beta$-Diophantine for $k$-tuples if $\beta>s+\frac{sA_k}{\tau_k}$. We just observed that for any $k\geq s$, we have $A_k\leq A_s$; as the growth exponent $\tau_k$ goes to $+\infty$ as $k \to +\infty$, the result follows.
\end{proof}

Finally in the opposite direction, using a simple pigeonhole argument, we show that $\beta$ cannot be too small.

\begin{prop} Let $G$ be an $s$-step nilpotent Lie group with Lie algebra $\fg$. Let $d_s$ be the dimension of the last step $\fg^{(s)}$. For every $\epsilon>0$ there is $k_0\geq 1$ such that if, for some $k \geq k_0$, $G$ is $\beta$-Diophantine for $k$-tuples, then $\beta>\frac{1}{d_s} - \epsilon$.
\end{prop}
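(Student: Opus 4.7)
The plan is a Dirichlet-type pigeonhole argument. Pick a generic $k$-tuple $\bg \in G^k$, so that by Lemma \ref{mapgroup}(3) the evaluation map $F_{k,G} \to \Gamma_\bg$ is an isomorphism and, by Corollary \ref{mapgrowth}, $|B_{\Gamma_\bg}(n)| \asymp n^{\tau_k}$. The goal is to exhibit, for every large $n$, a non-trivial element $\gamma \in B_{\Gamma_\bg}(2n)$ with
\[
d(\gamma,1) \ll n^{s(1 - D_k/d_s)},
\]
where $D_k := \rrk F_{k,G}^{(s)} = \dim(\cF_{k,s}/\cL_{k,s}(\fg))^{[s]}$. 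Comparing with the $\beta$-Diophantine lower bound $\delta_{\Gamma_\bg}(2n) \geq c \,|B_{\Gamma_\bg}(2n)|^{-\beta} \asymp n^{-\beta \tau_k}$ will then force
\[
\beta \geq \frac{s(D_k - d_s)}{d_s \, \tau_k},
\]
and the proof will reduce to checking that this right-hand side tends to $1/d_s$ as $k \to \infty$.

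To build $\gamma$, project to $\bar G := G/\exp(\fg^{(s)})$, an $(s-1)$-step nilpotent Lie group; a direct argument identifies $F_{k,\bar G}$ with $F_{k,G}/F_{k,G}^{(s)}$, so the Bass--Guivarc'h formula \eqref{BGf} gives $\tau_k - \bar\tau_k = s D_k$ and hence $|B_{\Gamma_\bg}(n)|/|B_{\bar\Gamma}(n)| \gg n^{sD_k}$. Pigeonholing the ball $B_{F_{k,G}}(n)$ over the fibres of the projection $\pi : \Gamma_\bg \to \bar\Gamma$ produces $M \gg n^{sD_k}$ distinct words $w_1,\dots,w_M$ sharing a common image in $\bar\Gamma$, so each $y_i := w_i(\bg) w_1(\bg)^{-1}$ lies in $\exp(\fg^{(s)}) \cong \RR^{d_s}$. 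Since $w_i w_1^{-1} \in F_{k,G}^{(s)} \cap B_{F_{k,G}}(2n)$, the refined version of Lemma \ref{lawtoword} (allowing the Lie-algebra representative to be taken purely of degree $s$) together with the boundedness of evaluation at the fixed tuple $\bg$ yields $\|y_i\| \ll n^s$. The injectivity of evaluation from our choice of $\bg$ guarantees that the $y_i$'s are $M$ distinct points in a box of side $O(n^s)$ in $\RR^{d_s}$; a standard partition into $M$ equal sub-cubes then forces two of them within distance $\ll n^s / M^{1/d_s} = n^{s(1-D_k/d_s)}$, and $\gamma := w_j w_i^{-1}$ does the job.

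The final step is the asymptotic $\tau_k \sim sD_k$ as $k \to \infty$. Writing $E_i^{(k)} := \dim(\cF_{k,s}/\cL_{k,s}(\fg))^{[i]}$, one has $\tau_k = \sum_{i=1}^s i E_i^{(k)}$ and, by Witt's formula, $E_i^{(k)} \leq \dim \cF_{k,s}^{[i]} = O(k^i)$, so it suffices to establish $E_s^{(k)} \gg k^s$. Since $\fg$ has nilpotency class exactly $s$, some $s$-fold bracket is non-zero in $\fg$; after multilinearisation this produces a non-zero element of the multilinear subspace of $(\cF_{s,s}/\cL_{s,s}(\fg))^{[s]}$, and spreading over the $\binom{k}{s}$ choices of $s$ letters among $\sx_1,\dots,\sx_k$ gives $E_s^{(k)} \geq c(\fg) \binom{k}{s} \gg k^s$. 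I expect this quantitative non-degeneracy of the multilinear part of the ideal of laws, together with the identification of $F_{k,\bar G}$ with $F_{k,G}/F_{k,G}^{(s)}$, to be the main technical points requiring careful justification.
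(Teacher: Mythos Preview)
Your argument is correct and follows the paper's overall strategy: a Dirichlet pigeonhole in the last step $\fg^{(s)}\cong\RR^{d_s}$, combined with the asymptotic $\tau_k\sim sD_k$ as $k\to\infty$. The pigeonhole portion matches the paper's closely (the paper counts $\asymp n^{se_s}$ elements of $F_{k,G}^{(s)}\cap B_{F_{k,G}}(n)$ directly rather than via the quotient $\bar G$, but this is cosmetic).

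The one substantive difference is in establishing $\tau_k\sim sD_k$. The paper invokes the $SL_k$-module structure: by Corollary~\ref{dimensioncor} every irreducible constituent of $\cF_k^{[i]}$ has dimension a degree-$i$ polynomial in $k$ (for $k\geq s$), so each $e_i$ is bounded between two degree-$i$ polynomials in $k$, forcing $\tau_k-se_s=O(k^{s-1})$ and $se_s\gg k^s$. Your approach is more elementary and self-contained: Witt's formula~\eqref{witwit} gives the upper bound $e_i\leq\dim\cF_k^{[i]}=O(k^i)$ directly, while the lower bound $D_k\gg k^s$ comes from spreading a single non-trivial multilinear bracket over the $\binom{k}{s}$ choices of letters (these lie in distinct weight spaces, hence are independent). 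Your route avoids the representation-theoretic appendix entirely.

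One small correction: the rank $\rrk F_{k,G}^{(s)}$ equals $\dim(\cF_{k,s}/\cL_{k,s}(\fg)_\QQ)^{[s]}$, not $\dim(\cF_{k,s}/\cL_{k,s}(\fg))^{[s]}$, since $F_{k,G}$ is a lattice in $(\cF_{k,s}/\cL_{k,s}(\fg)_\QQ,*)$ by Proposition~\ref{groupofwordmaps}. This does not affect your argument (you only need $e_i\leq\dim\cF_k^{[i]}$ and $e_s\geq\binom{k}{s}$, both of which hold for the $\QQ$-version), but the identification $\tau_k=\sum_i iE_i^{(k)}$ as written is off by this discrepancy.
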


\begin{proof} Let $e_i$ be the rank of the $i$-th successive quotient in the central descending series of $F_{k,G}$. Then since $F_{k,G}$ is a lattice in $\cF_{k,s}/\cL_{k,s}(\fg)_{\QQ}$ (see Prop. \ref{groupofwordmaps}), $e_i$ is the dimension of $\cF_{k,s}^{[i]}/\cL^{[i]}_{k,s}(\fg)_{\QQ}$ for each $i=1,\ldots,s$. But $\cL_{k,s}^{[i]}(\fg)_{\QQ}$ is an $SL_k$-submodule of $\cF_{k,s}$ for the action by linear substitution of the variables. And by Corollary \ref{dimensioncor} each irreducible $SL_k$-module appearing in $\cF_{k}^{[i]}$  has its dimension equal to a polynomial of degree $i$ in $k$, when $k \geq s$. The number of irreducible modules that may appear in $\cF_{k}^{[i]}$ is bounded in terms of $i$ only, so each $e_i$ is bounded above and below by a polynomial of degree $i$ in the variable $k$. With some extra work it can be shown that $e_i$ is a polynomial in $k$ when $k \geq s$, but we will not need that.

Now the Bass-Guivarc'h formula $(\ref{BGf})$ tells us that $\tau_k-se_s$ is a linear combination of the $e_i$, $i<s$. Hence it is bounded above by a polynomial in $k$ of degree at most $s-1$. In particular $\lim_{k \to +\infty} \frac{se_s}{\tau_k}=1$. Now there are roughly $n^{se_s}$ word maps in the word ball of $F_{k,G}$ with radius $n$ that lie in $F_{k,G}^{[s]}$. For a given $k$-tuple the images of these word maps lie in $G^{[s]}$ and are at (left-invariant Riemannian) distance $O(n)$ from the origin. Hence they lie in a part of $G^{[s]}$ of measure $O(n^{sd_s})$, because their norm in $Lie(G)$ is of order $O(n^s)$ (by the ball-box principle, e.g. see \cite[Prop. 4.5]{breuillard}). By pigeonhole there must be a word of length $\leq 2n$ lying at (Riemannian) distance $\ll n^{-s(\frac{e_s}{d_s}-1)}$ from the origin. Letting $k$ tend to $+\infty$ the result follows.
\end{proof}

For a Diophantine $s$-step nilpotent Lie group $G$, we can set
$$\beta_k:=\inf\{\beta>0 ; G \textnormal{ is } \beta\textnormal{-Diophantine for } k\textnormal{-tuples}\}.$$ The above discussion shows that

$$\frac{1}{d_s} \leq \liminf \beta_k \leq \limsup \beta_k \leq s$$
where $d_s=\dim G^{[s]}$. At first glance it may seem surprising that these bounds hold for every Diophantine nilpotent group regardless of the Diophantine exponent $A_k$ of $\cL_{k,s}(\fg)$ in $\cF_{k,s}$.

It seems plausible that $\lim_k \beta_k$ exists and is $>0$ for every Diophantine nilpotent Lie group. This can be verified in certain cases. For example, a Borel-Cantelli argument can be used to prove that the critical exponent for the $3$-dimensional Heisenberg group is $\beta_k:=1-\frac{1}{k}-\frac{2}{k^2}$. In this case for any $\beta<\beta_k$ almost every $k$-tuple is not $\beta$-Diophantine. It would be interesting to compute exactly the critical exponent say for all nilpotent Lie groups defined over $\QQ$. We plan to address some of these issues in a subsequent paper.

\subsection{Speed of equidistribution}

In \cite{breuillard-equid} the second named author proved that finitely generated dense subgroups of connected nilpotent Lie groups are equidistributed. However no rate of convergence was derived. In part because of the use of ergodic theory through Ratner's theorem in the proof. Of course no good error term is to be expected for general dense subgroups. Indeed even when $G=\RR$, the $2$-generated subgroup $\ZZ+\lambda\ZZ$ is equidistributed (Weyl) but no rate can be expected if say $\lambda$ is irrational and yet extremely well approximable by rationals. It seems likely however that Diophantine dense subgroups are equidistributed with some (polynomial) rate (this is true for $\RR$ by a standard Fourier argument), and perhaps this is even true for random subgroups in any nilpotent Lie group, Diophantine or not. This can be compared with the situation in $SU(2)$, where we expect (cf. Sarnak's spectral gap conjecture) that every (as opposed to almost every) $k$-tuple equidistribute with a good rate. In this case, in stark contrast with the nilpotent case, we know by work of the second named author \cite[Corollary 1.11]{breuillard-sl2} that every $k$-tuple satisfies a weak form of the Diophantine property.

\subsection{Almost laws}\label{almostlaws}

In showing that there are non-Diophantine nilpotent and solvable Lie groups, we proved that there are sequences of words $w_n$ on $k$-tuples which are not laws of $G$ but behave like almost laws in that for every compact subset $K$ of $G^k$, $w_n(K) \to 1$ very fast. We informally call such words almost laws. If $G$ is any algebraic group defined over $\QQ$, then picking a non zero rational point it is easy to see that an almost law cannot shrink every fixed compact set faster than exponentially fast (in the length $l(w_n)$ of the word) for a general $G$, and faster than polynomially fast if $G$ is nilpotent. It may be of interest to observe that the well-known shrinking property of commutators near the identity in any Lie group $G$, implies that the sequence of iterated commutators $w_{n+1}=[w_n,w_{n-1}]$ shrink a fixed neighborhood of $1$ at speed $e^{C\sqrt{l(w_n)}}$. Note that in \cite{thom} a construction is given of a sequence of almost laws $w_n$ such that $w_n(G^k) \to 1$ for every compact group $G$. We end with the following question, say for $G=SU(2)$ : can one find a sequence of words $w_n$ such that $w_n(G^k) \subset B(1,e^{-c l(w_n)})$ for some $c>0$ as $l(w_n) \to +\infty$ ?



\appendix
\section{The free Lie algebra viewed as an $SL_k$-module}
\label{app:weightformula}

In this appendix we recall Witt's formula for the dimension of the weight spaces of the free Lie algebra on $k$ generators. Then we use this formula to decompose the free nilpotent Lie algebra on $k$ generators and step $s$ into irreducible $SL_k$-modules for small values of $k$ and $s$. We also determine precisely for which values of $k$ and $s$ this decomposition is multiplicity-free. As we saw in Section \ref{sec:submodules}, this is key to proving that nilpotent Lie groups in step at most $5$ are Diophantine and to build counter-examples in higher step.

\subsection{Witt's formula} Throughout the appendix, $\cF_k$ denotes the free nilpotent Lie algebra on $k$ generators over a field $K$ of characteristic zero (not necessarily $\RR$ as in the rest of the paper). Let $\cF_k^{[s]}$ be the subspace of elements of homogeneous degree $s$ in $\cF_k$.
The group $SL_k:=SL_k(K)$ acts naturally on $\cF_k^{[s]}$ by linear substitution of the free variables. Let $A$ be the diagonal subgroup of $SL_k$.
The representation $\cF_k^{[s]}$ splits as a direct sum of weight spaces:

$$\cF_k^{[s]}=\oplus_{\chi} V(\chi)$$
where $V(\chi)$ is the weight space associated to $\chi \in A^*$. The weights are multiplicative characters on $A$. Given non-negative integers $n_1,\ldots,n_k$, let $\ell(n_1,\ldots,n_k)$ be the dimension of the weight space with weight
\begin{equation}\label{chinot}
\chi_{(n_1,\ldots,n_k)} : a \mapsto \prod_i a_i^{n_i},
\end{equation}
where $a \in A$ is the diagonal matrix $\diag(a_1,\ldots,a_k)$. We will often abbreviate $\chi_{(n_1,\ldots,n_k)}$ simply by $(n_1,\ldots,n_k)$.

In 1937 Witt proved (\cite[Satz 1]{witt}) what is now known as the Poincar\'e-Birkhoff-Witt Theorem about Lie algebras and their universal enveloping algebras. In his third theorem \cite[Satz 3]{witt}, he deduces from it two dimension formulas. The first gives the dimension of $\cF_k^{[s]}$.

\begin{equation}\label{witwit}
\dim \cF_k^{[s]} = \frac{1}{s} \sum_{d|s} \mu(d) k^{\frac{s}{d}},
\end{equation}
where $\mu(d)$ is the M\"obius function. By M\"obius inversion, this formula is equivalent to:

\begin{equation}\label{wittbis}
\sum_{d|s} d \dim \cF_k^{[d]} = k^s
\end{equation}

The second formula of Witt refines the first and gives the dimension of the homogeneous components. It can be stated as follows.

\begin{thm}[Witt's Character Formula for the free Lie algebra]\label{characterformula}  Let $n_1,\ldots,n_k$ be non-negative integers. The dimension $\ell(n_1,\ldots,n_k)$ of the weight space with weight $(n_1,\ldots,n_k)$ for the $SL_k$-action on the subspace $\cF_k^{[s]}$ of commutators of order $s$ in the free Lie algebra on $k$ generators is:
$$\ell(n_1,\ldots,n_k)= \frac{1}{n} \sum_{d | gcd(n_1,\ldots,n_k)} \mu(d) \frac{(\frac{n}{d})!}{(\frac{n_1}{d})! \cdot \ldots \cdot (\frac{n_k}{d})!},$$
where $n=n_1+\ldots+n_k$.
\end{thm}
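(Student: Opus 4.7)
The plan is to derive the character formula from the Poincar\'e--Birkhoff--Witt theorem applied to the multigraded free Lie algebra, and then extract $\ell(n_1,\ldots,n_k)$ by a generating-function computation followed by M\"obius inversion.

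First I would set up the multigrading. The free associative algebra $T=K\langle x_1,\ldots,x_k\rangle$ is multigraded by $\NN^k$, counting the number of occurrences of each letter in a monomial. The dimension of the multidegree-$(n_1,\ldots,n_k)$ component is the multinomial coefficient $\binom{n}{n_1,\ldots,n_k}$, where $n=n_1+\cdots+n_k$, since this counts words in $x_1,\ldots,x_k$ with $n_i$ occurrences of $x_i$. Hence
\[
\sum_{(n_1,\ldots,n_k)\in\NN^k}\binom{n}{n_1,\ldots,n_k}\,t_1^{n_1}\cdots t_k^{n_k}=\frac{1}{1-(t_1+\cdots+t_k)}.
\]
Since the free Lie algebra $\cF_k$ has $T$ as its universal enveloping algebra, PBW tells us that $T\cong S(\cF_k)$ as multigraded vector spaces (the isomorphism is multigraded because any ordered basis of $\cF_k$ consisting of weight vectors for the diagonal $A$-action produces ordered monomials whose total weight equals the sum of the weights).

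Next I would translate this into a generating-function identity. Decomposing $\cF_k=\bigoplus_{\chi}V(\chi)$ into weight spaces under $A$ and using the standard product formula for the Hilbert series of a symmetric algebra, the PBW identification yields
\[
\prod_{(n_1,\ldots,n_k)\neq 0}\frac{1}{(1-t_1^{n_1}\cdots t_k^{n_k})^{\ell(n_1,\ldots,n_k)}}=\frac{1}{1-(t_1+\cdots+t_k)}.
\]
Taking the logarithm of both sides and expanding gives
\[
\sum_{s\geq 1}\frac{(t_1+\cdots+t_k)^s}{s}=\sum_{(n_1,\ldots,n_k)\neq 0}\ell(n_1,\ldots,n_k)\sum_{d\geq 1}\frac{(t_1^{n_1}\cdots t_k^{n_k})^d}{d}.
\]
Expanding the left-hand side by the multinomial theorem and comparing the coefficient of $t_1^{m_1}\cdots t_k^{m_k}$ (with $m=\sum m_i$) gives
\[
\frac{1}{m}\binom{m}{m_1,\ldots,m_k}=\sum_{d\mid \gcd(m_1,\ldots,m_k)}\frac{\ell(m_1/d,\ldots,m_k/d)}{d}.
\]

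Finally I would apply M\"obius inversion. Fixing the primitive ratio $(r_1,\ldots,r_k)$ with $\gcd(r_i)=1$ and writing $m_i=Mr_i$, the above identity becomes a one-variable divisor convolution in $M$: setting $F(M):=\frac{1}{m}\binom{m}{m_1,\ldots,m_k}$ and $L(M):=\ell(m_1,\ldots,m_k)$, it reads $MF(M)=\sum_{e\mid M}eL(e)$. Classical M\"obius inversion produces $ML(M)=\sum_{e\mid M}\mu(M/e)\,eF(e)$, which after reindexing $d=M/e$ becomes exactly
\[
\ell(n_1,\ldots,n_k)=\frac{1}{n}\sum_{d\mid\gcd(n_1,\ldots,n_k)}\mu(d)\,\frac{(n/d)!}{(n_1/d)!\cdots(n_k/d)!}.
\]
No step here is really difficult; the only point requiring a little care is the bookkeeping to ensure that the PBW isomorphism respects the full $\NN^k$-multigrading (so that one may equate the refined Hilbert series, and not merely the total-degree Hilbert series that gives Witt's formula~\eqref{witwit}). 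Once that is secured, the rest is a mechanical manipulation of formal power series combined with a standard M\"obius inversion.
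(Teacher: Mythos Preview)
Your proof is correct and follows essentially the same route as the paper: both arguments use that the free associative algebra is the universal enveloping algebra of $\cF_k$, invoke PBW to obtain the multigraded product identity $\prod_{(n_1,\ldots,n_k)\neq 0}(1-t_1^{n_1}\cdots t_k^{n_k})^{-\ell(n_1,\ldots,n_k)}=\bigl(1-(t_1+\cdots+t_k)\bigr)^{-1}$, take logarithms, compare coefficients, and finish by M\"obius inversion. The only cosmetic difference is that the paper writes out an explicit commutator basis $\{C_j\}$ and the associated PBW monomials, whereas you phrase the same step more abstractly as the multigraded isomorphism $T\cong S(\cF_k)$.
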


In the above formula $\mu(d)$ is the M\"obius function. By M\"obius inversion, the formula is readily seen to be equivalent to:
$$\sum_{m | gcd(n_1,\ldots,n_k)} \frac{1}{m} \ell(\frac{n_1}{m},\ldots,\frac{n_k}{m}) = \frac{((\sum_i n_i) -1)!}{n_1! \cdot \ldots \cdot n_k!}$$
for all choices of non-negative integers $n_1,\ldots,n_k$.

For the reader's convenience, we reproduce Witt's proof below following Serre's treatment of Witt's first dimension formula ($\ref{wittbis}$). See \cite[Chapter 4]{serrelalg} and also Hall's book \cite[chapter 11.2]{hall} for a different proof.

\begin{proof}Let $A$ be the free associative $K$-algebra on $k$ generators (i.e. formal linear combinations of \emph{non-commutative} monomials in $k$ letters). Let $a(n_1,\ldots,n_k)$ be the dimension of the subspace of $A$ generated by  non-commutative monomials with the letter $X_i$ occurring $n_i$ times.

By \cite[Theorem 4.2.1]{serrelalg}, the algebra $A$ is isomorphic to the universal enveloping algebra of the free Lie algebra. Pick a basis $\{C_j\}_j$ of the free Lie algebra on $k$ generators $X_1,\ldots,X_k$ made of commutators $C_j$'s. By the Poincar\'e-Birkhoff-Witt Theorem, $A$ has a basis consisting of monomials of the form
$$C^e:=C_{i_1}^{e_{i_1}}\cdot \ldots \cdot C_{i_k}^{e_{i_k}} \textnormal{ with }i_1<i_2<\ldots<i_k$$
We have $deg(C^e)=\sum_j e_{i_j}d_{i_j}$ and $deg_{X_i}(C^e)=\sum_j  e_{i_j}d_{C_{i_j}}(X_i)$, where $d_{C_j}(X_i)$ is the number of occurrences of the letter $X_i$ in the commutator $C_j$.

Formula $(\ref{wittbis})$  will follow by counting $a(n_1,\ldots,n_k)$ in two ways. On the one hand it is clear that

$$a(n_1,\ldots,n_k) = \frac{(\sum_i n_i)!}{n_1! \ldots n_k!}$$

And on the other hand $a(n_1,\ldots,n_k)$ is also the number of families $\{e_j\}$'s such that
$$n_i=\sum_j e_j d_{C_j}(X_i)$$
for each $i=1,\ldots,k$. Therefore $a(n_1,\ldots,n_k)$ is the coefficient of $t_1^{n_1} \cdot \ldots \cdot t_k^{n_k}$ in the formal power series:

$$\prod_j (1 + t_1^{d_{C_j}(X_1)} \cdot \ldots \cdot t_k^{d_{C_j}(X_k)}  + t_1^{2d_{C_j}(X_1)} \cdot \ldots \cdot t_k^{2d_{C_j}(X_k)} + \ldots + t_1^{md_{C_j}(X_1)} \cdot \ldots \cdot t_k^{md_{C_j}(X_k)} + \ldots)$$
which is the same as
$$\prod_j \frac{1}{1-t_1^{d_{C_j}(X_1)} \cdot \ldots \cdot t_k^{d_{C_j}(X_k)} }.$$
Therefore we have the following identity:
$$\prod_j \frac{1}{1-t_1^{d_{C_j}(X_1)} \cdot \ldots \cdot t_k^{d_{C_j}(X_k)} } = \sum_{n_1,\ldots,n_k}  \frac{(\sum_i n_i)!}{n_1! \ldots n_k!} t^{n_1} \cdot \ldots \cdot t_k^{n_k} = \frac{1}{1-(t_1+\ldots + t_k)}.$$

Using $\log \frac{1}{1-u} = \sum_m \frac{1}{m} u^m$, and taking logs we get:

$$\sum_{n_1,\ldots,n_k,m} \ell(n_1,\ldots,n_k) \frac{1}{m} (t_1^{n_1} \cdot \ldots \cdot t_k^{n_k})^m = \sum_m \frac{1}{m} (t_1 + \ldots + t_k)^m$$

Identifying the coefficients of each term we finally obtain the desired formula:

$$\sum_{m | gcd(n_1,\ldots,n_k)} \frac{1}{m} \ell(\frac{n_1}{m},\ldots,\frac{n_k}{m}) = \frac{((\sum_i n_i) -1)!}{n_1! \cdot \ldots \cdot n_k!},$$
which, applying M\"obius inversion, yields
$$\ell(n_1,\ldots,n_k) = \sum_{d | gcd(n_1,\ldots,n_k)} \frac{\mu(d)}{d} \frac{((\sum_i n_i/d) -1)!}{(n_1/d)! \cdot \ldots \cdot (n_k/d)!}= \frac{1}{n}\sum_{d | gcd(n_1,\ldots,n_k)} \mu(d) \frac{(n/d)!}{(n_1/d)! \cdot \ldots \cdot (n_k/d)!}.$$
\end{proof}

\subsection{Multiplicity-free theorem}

Armed with Theorem \ref{characterformula} we are now able to determine the decomposition of $\cF_k^{[s]}$ into simple $SL_k$-modules. For small values of $k$ and $s$ we can give a complete description of the irreducible submodules. And for all values of $k$ and $s$ we determine when it is multiplicity-free.

\begin{thm}[Multiplicity-free]\label{multfree} Let $\cF_k^{[s]}$  be the subspace spanned by commutators of order $s$ in the free Lie algebra on $k$ generators. Then $\cF_k^{[s]}$ is a multiplicity-free $SL_k$-module if and only if $s\leq 5$ when $k\geq 3$, and if and only if $s \leq 6$ when $k=2$.
\end{thm}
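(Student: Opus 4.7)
The strategy is to reduce the question to a finite computation plus a propagation argument via the Kraskiewicz--Weyman description of the multiplicities.

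\emph{Set-up.} By complete reducibility (Lemma \ref{qsubmodules}), decompose
$$\cF_k^{[s]} \;=\; \bigoplus_{\lambda} m_\lambda(k)\,E^\lambda,$$
where $\lambda$ ranges over partitions of $s$ with at most $k$ parts and $E^\lambda$ is the irreducible $SL_k$-module of highest weight $\lambda$. By Schur--Weyl duality (applied to the $S_s$-character of the multilinear component $\mathrm{Lie}_s \subset \cF_s^{[s]}$), the multiplicity $m_\lambda(k)$ is independent of $k$ for $k \geq \ell(\lambda)$ and equals the multiplicity $m_\lambda$ of the Specht module $S^\lambda$ in $\mathrm{Lie}_s$. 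Consequently one can forget $k$ and work with the single invariant $m_\lambda$. Since the weight-$\mu$ multiplicity in $E^\lambda$ is the Kostka number $K_{\lambda,\mu}$, Witt's Character Formula (Theorem \ref{characterformula}) translates into the unitriangular system
$$\ell(\mu) \;=\; \sum_{\lambda \trianglerighteq \mu} m_\lambda\,K_{\lambda,\mu}\qquad (\mu \vdash s),$$
which can be inverted to read off each $m_\lambda$ from Witt's formula.

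\emph{Sufficiency ($s \leq 5$ for $k \geq 3$, and $s \leq 6$ for $k = 2$).} This is a finite computation. I would enumerate the partitions in question (at most 7 partitions for $s=5$; at most $4$ two-row partitions for $s=6$), compute the numbers $\ell(\mu)$ via Witt's formula, and extract $m_\lambda$ by the above triangular inversion. Each case yields $m_\lambda \in \{0,1\}$, confirming multiplicity-freeness. The case $k=2$ is separated from $k \geq 3$ precisely because the partitions with three or more parts (notably $(3,2,1)$, the culprit at $s=6$) are disallowed when $k=2$.

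\emph{Necessity.} It suffices to exhibit, for each base case, a partition with $m_\lambda \geq 2$, then propagate to all higher $s$. For $k \geq 3, s = 6$: take $\lambda = (3,2,1)$. By the Kraskiewicz--Weyman formula, $m_{(3,2,1)}$ equals the number of standard Young tableaux of shape $(3,2,1)$ whose major index is $\equiv 1 \pmod{6}$; an explicit enumeration (the $16$ SYT of this shape are easily listed) yields $m_{(3,2,1)} \geq 2$. For $k = 2, s = 7$: take $\lambda = (4,3)$ and check by the same formula that $m_{(4,3)} \geq 2$. To cover all $s \geq 6$ (resp.\ $s \geq 7$), one fixes a base-case shape $\lambda^{(0)}$ and considers the family $\lambda^{(s)} = (\lambda^{(0)}_1 + (s - s_0), \lambda^{(0)}_2, \ldots)$ obtained by extending the first row; a combinatorial analysis of how major indices of SYT behave under row extension (essentially a Pieri-type argument on the major-index grading, cf.\ the shape $(2,2,1^{s-4})$ used later in Corollary \ref{twotwoone}) shows $m_{\lambda^{(s)}} \geq m_{\lambda^{(0)}} \geq 2$. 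For $k = 2$ the analogous propagation in the two-row regime can be verified directly from the closed formula for $m_{(s-j,j)}$ coming from Witt's formula.

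\emph{Main obstacle.} The routine computational part is Witt's formula plus Kostka inversion; this is tedious but mechanical. The real content is on the necessity side: one must identify the right witness partition at the threshold and verify that the Kraskiewicz--Weyman count of SYT of that shape with major index $\equiv 1 \pmod s$ is at least $2$, and then arrange the combinatorial propagation so the multiplicity persists for all larger $s$. The separation between the $k \geq 3$ and $k = 2$ thresholds is forced by the fact that the simplest witnessing shape at step $6$ (namely $(3,2,1)$) has three rows, so that a genuinely new phenomenon first appears one step later when only two-row partitions are available.
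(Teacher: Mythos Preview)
Your sufficiency direction (the finite computations for $s\leq 5$, and $s\leq 6$ when $k=2$) matches the paper's approach exactly: Witt's formula plus Kostka inversion, case by case.

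On the necessity side your argument has a real gap, and the paper's route is both simpler and avoids it. Your ``propagation'' step---extend the first row of a base witness $\lambda^{(0)}$ and claim $m_{\lambda^{(s)}}\geq m_{\lambda^{(0)}}$ via a major-index analysis---is not justified as stated. Adding $s+1$ at the end of the top row of a standard tableau does keep the major index fixed, but the Kraskiewicz--Weyman condition is congruence modulo $s$, and that modulus changes with $s$; so a tableau that witnessed $\maj\equiv 1\pmod{s_0}$ need not witness $\maj\equiv 1\pmod{s_0+1}$. Your pointer to Corollary~\ref{twotwoone} is also off: that result extends by adding \emph{rows} (boxes in a single column), not by extending the first row, and its proof is an ad~hoc check of two specific tableaux for each $s$, not a monotonicity statement.

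The paper bypasses all of this. For every $s\geq 7$ and every $k\geq 2$ (so both regimes at once), Lemma~\ref{baby} shows directly from Witt's formula that the weight space of weight $(s-2,2)$ has dimension $\lfloor (s-1)/2\rfloor\geq 3$, while in each of $E^{(s-1,1)}$ and $E^{(s-2,2)}$ that weight space is one-dimensional; hence $E^{(s-2,2)}$ occurs with multiplicity at least $2$. No Kraskiewicz--Weyman, no induction. The only case not covered by this is $s=6$ with $k\geq 3$, which the paper handles by the same explicit Kostka inversion you describe (finding, e.g., $m_{(3,2,1)}=3$). So your base-case computations are fine; replace the propagation step by the two-row weight count $\ell(s-2,2)$ and you recover the paper's proof.
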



Recall that irreducible representations of $SL_k$ are parametrized by their highest weight. If $n_1 \geq \dots \geq n_k \geq 0$ are non-negative integers, we denote by $E^{(n_1,\dots,n_k)}$ the irreducible representation of $SL_k$ with highest weight $(n_1,\ldots,n_k)$. If $n_{i+1}=n_{i+2}=\dots=n_k=0$, we will write $E^{(n_1,\dots,n_i)}$ for $E^{(n_1,\dots,n_i,0,\dots,0)}$.



We also recall Weyl's dimension formula for $E^{(n_1,\ldots,n_k)}$ (\cite[Theorem~6.3]{fulton-harris})

\begin{equation}\label{weyl}
\dim E^{(n_1,\ldots,n_k)}=\prod_{1\leq i<j\leq k}\frac{n_i-n_j+j-i}{j-i}
\end{equation}

From this formula we derive:

\begin{lem} Given $k\geq s \geq 1$ and $n_1\geq \ldots \geq n_s$ non-negative integers such that $s=n_1+\ldots+n_s$, the dimension of the irreducible $SL_k$-module $E^{(n_1,\ldots,n_k)}$ is the value at $k$ of a polynomial of degree $s$ and coefficients in $\QQ$.
\end{lem}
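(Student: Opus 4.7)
The plan is to apply Weyl's dimension formula $(\ref{weyl})$ directly and carefully split the resulting product according to how the indices $i<j$ sit relative to the threshold $s$. Since $n_{s+1}=\ldots=n_k=0$ by assumption, the product
\[
\dim E^{(n_1,\ldots,n_k)}=\prod_{1\leq i<j\leq k}\frac{n_i-n_j+j-i}{j-i}
\]
splits naturally into three pieces: (i) pairs with $j\leq s$; (ii) pairs with $i\leq s<j$; and (iii) pairs with $s<i<j$. In region (iii) every factor equals $1$, since $n_i=n_j=0$. Region (i) contains only finitely many factors and depends on $n_1,\ldots,n_s$ but not on $k$, so it contributes a fixed rational constant $c(n_1,\ldots,n_s)$.

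The content of the lemma is therefore in region (ii). For each fixed $i\in\{1,\ldots,s\}$, the partial product over $j=s+1,\ldots,k$ becomes
\[
\prod_{j=s+1}^{k}\frac{n_i+j-i}{j-i}
=\frac{1}{(s-i+1)(s-i+2)\cdots(s-i+n_i)}\prod_{j=s+1}^{k}\frac{(j-i+n_i)(j-i+n_i-1)\cdots(j-i+1)}{(j-i)(j-i-1)\cdots(j-i-n_i+1)},
\]
which telescopes to
\[
\frac{(s-i)!}{(s-i+n_i)!}\cdot\frac{(k-i+n_i)!}{(k-i)!}
=\frac{(s-i)!}{(s-i+n_i)!}\,(k-i+1)(k-i+2)\cdots(k-i+n_i).
\]
Thus this factor is a polynomial in $k$ of degree exactly $n_i$ with rational coefficients.

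Multiplying these contributions over $i=1,\ldots,s$ and combining with the $k$-independent constant from region (i), one obtains a polynomial in $k$ with rational coefficients of degree $\sum_{i=1}^{s}n_i=s$, as claimed. The only mild point to check is that the leading coefficient does not vanish, which is immediate since each factor in the mixed product has leading term $k^{n_i}$.

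The argument is essentially a bookkeeping exercise; the one place care is needed is in the telescoping identity for region (ii) and in verifying that the $i$-dependent denominators are genuine nonzero constants (which holds because $s\geq i$ ensures $s-i+1,\ldots,s-i+n_i$ are positive integers). I do not expect any serious obstacle.
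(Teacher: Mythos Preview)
Your approach is essentially the same as the paper's: both apply Weyl's dimension formula and split the product according to whether the indices exceed $s$, obtaining for each $i\leq s$ a factor that is a degree-$n_i$ polynomial in $k$ with rational coefficients. Your final expression $\frac{(s-i)!}{(s-i+n_i)!}(k-i+1)\cdots(k-i+n_i)$ is correct, but the displayed intermediate identity just before ``which telescopes to'' is not (try $n_i=2$, $i=1$, $s=2$, $k=3$); simply write the numerator and denominator of $\prod_{j=s+1}^{k}\frac{n_i+j-i}{j-i}$ as ratios of factorials directly and skip that line.
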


\begin{proof}Split the product in $s$ factors $F_i:=\prod_{j=i+1}^k\frac{n_i-n_j+j-i}{j-i}$ for $i=1,\ldots,s$. Each such $F_i$ is in fact the product of a rational number independent of $k$ (the product of the factors arising when $i+1<j\leq s$ and the denominators of the fraction when $j=s+1,\ldots,s+n_i$) with $(k+1)\cdot\ldots\cdot(k+n_i)$. So each $F_i$ is a polynomial of degree $n_i$ in $k$ with coefficients in $\QQ$. The claim follows.
\end{proof}

Since the only weights $(n_1,\ldots,n_k)$ that can occur in $\cF_{k,s}^{[s]}$ are such that $n_1+\ldots+n_k=s$, we obtain:

\begin{cor}\label{dimensioncor} If $k \geq s$, the dimension of every irreducible $SL_k$-module occurring in $\cF_{k,s}^{[s]}$ is a polynomial of degree $s$ in $k$.
\end{cor}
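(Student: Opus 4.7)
The plan is to deduce the corollary directly from the preceding lemma, after observing that the grading on $\cF_k^{[s]}$ constrains which highest weights can actually appear. An irreducible $SL_k$-submodule of $\cF_k^{[s]}$ is determined by its highest weight $(n_1,\ldots,n_k)$ with $n_1\geq n_2\geq\ldots\geq n_k\geq 0$, and the key observation is that any weight $(n_1,\ldots,n_k)$ occurring in the homogeneous component $\cF_k^{[s]}$ satisfies $n_1+n_2+\ldots+n_k=s$. Indeed, under the diagonal action of $A\leq SL_k$, a commutator of order $s$ in which the variable $X_i$ appears with multiplicity $n_i$ is an eigenvector for the character $\chi_{(n_1,\ldots,n_k)}$ of $(\ref{chinot})$, and the total multiplicity $\sum n_i$ equals the degree $s$.

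With this weight constraint in hand, the reduction to the preceding lemma is immediate. Since $n_1\geq\ldots\geq n_k\geq 0$ are non-negative integers summing to $s$, at most $s$ of them can be non-zero; the assumption $k\geq s$ therefore forces $n_{s+1}=\ldots=n_k=0$. The highest weight thus takes the form $(n_1,\ldots,n_s,0,\ldots,0)$ with $n_1\geq\ldots\geq n_s\geq 0$ and $n_1+\ldots+n_s=s$, and this is precisely the setup of the preceding lemma, which then gives that $\dim E^{(n_1,\ldots,n_k)}$ is the value at $k$ of a polynomial of degree $s$ with rational coefficients.

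There is no real obstacle here: the corollary is essentially a packaging of the lemma plus the grading observation. The only minor point to double-check is that the polynomial really has degree \emph{exactly} $s$ (and not smaller). This is transparent from the proof of the lemma, where the factor $F_i=\prod_{j=i+1}^k \frac{n_i-n_j+j-i}{j-i}$ contributes a rising product $\frac{(k-i+1)(k-i+2)\cdots(k-i+n_i)}{(s-i+1)(s-i+2)\cdots(s-i+n_i)}$ (after the $k$-independent factors coming from $i<j\leq s$) and so has degree exactly $n_i$ in $k$ with positive leading coefficient; the product over $i=1,\ldots,s$ therefore has degree exactly $n_1+\ldots+n_s=s$, with no cancellation of leading terms.
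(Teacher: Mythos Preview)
Your proof is correct and follows exactly the paper's approach: the paper simply notes that the weights occurring in $\cF_{k,s}^{[s]}$ satisfy $\sum_i n_i=s$ and then invokes the preceding lemma, which is precisely what you do. Your additional check that the degree is \emph{exactly} $s$ (via the positivity of the leading coefficients of the $F_i$) is a welcome clarification that the paper leaves implicit.
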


If $V$ is a finite dimensional $SL_k$-module and $(n_1,\ldots,n_k)$ is its highest weight, then the irreducible $SL_k$-module $E^{(n_1,\ldots,n_k)}$ with highest weight $(n_1,\ldots,n_k)$ occurs as a submodule of $V$. Since the dimensions of the weight spaces of each irreducible $SL_k$-module are known, or at least can be computed, we have a procedure to decompose the original module $V$ into a direct sum of irreducible submodules.

The dimension of the weight space of weight $(u_1,...,u_k)$ in the irreducible representation with highest weight $(n_1,...,n_k)$ can be determined by counting Young tableaux. Recall that to each irreducible $SL_k$-module $E^{(n_1,...,n_k)}$, $n_1 \geq \dots \geq n_k \geq 0$, is associated the Young diagram $\lambda:=(n_1,...,n_k)$, with $n_i$ boxes in the $i$-th row. A semi-standard tableau of shape $\lambda$ is a filling of the Young diagram $\lambda$  with positive integers (one in each box) in such a way that the rows are non-decreasing and the columns strictly increasing.

We have (see e.g. Fulton-Harris \cite[p.224]{fulton-harris}):

\begin{thm}\label{dimirr}
Given a Young diagram $\lambda:=(n_1,\ldots,n_k)$, the dimension of the weight space of weight $(u_1,...,u_k)$ in $E^{\lambda}$  is the number of semi-standard tableaux of shape $\lambda$ filled with integers $i \in \{1,\dots,k\}$ such that each $i$ occurs $u_i$ times.
\end{thm}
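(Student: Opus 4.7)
The plan is to deduce this combinatorial description of weight multiplicities from the Weyl character formula together with the combinatorial identity expressing a Schur polynomial as a sum over semi-standard Young tableaux.

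First I would recall that the character of the irreducible $SL_k$-module $E^\lambda$, restricted to the diagonal torus $A$, is given by the Weyl character formula
\[
\operatorname{ch} E^{\lambda}(a_1,\ldots,a_k)=\frac{\det\bigl(a_i^{n_j+k-j}\bigr)_{1\le i,j\le k}}{\det\bigl(a_i^{k-j}\bigr)_{1\le i,j\le k}},
\]
and observe that by definition the right-hand side is precisely the Schur polynomial $s_\lambda(a_1,\ldots,a_k)$ associated to the partition $\lambda=(n_1,\ldots,n_k)$. On the other hand, expanding the character in the basis of multiplicative characters $\chi_{(u_1,\ldots,u_k)}$ of $A$, the dimension of the weight space of weight $(u_1,\ldots,u_k)$ in $E^\lambda$ is exactly the coefficient of the monomial $a_1^{u_1}\cdots a_k^{u_k}$ in $s_\lambda(a_1,\ldots,a_k)$.

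The second step is then the combinatorial identity
\[
s_\lambda(a_1,\ldots,a_k)=\sum_{T}a_1^{u_1(T)}\cdots a_k^{u_k(T)},
\]
where the sum ranges over semi-standard Young tableaux $T$ of shape $\lambda$ with entries in $\{1,\ldots,k\}$ and $u_i(T)$ counts the number of occurrences of $i$ in $T$. Identifying the coefficient of $a_1^{u_1}\cdots a_k^{u_k}$ on both sides of this identity yields exactly the claim of the theorem: the dimension of the weight space of weight $(u_1,\ldots,u_k)$ in $E^\lambda$ is the number of semi-standard tableaux of shape $\lambda$ in which each $i\in\{1,\ldots,k\}$ occurs $u_i$ times.

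The main work is therefore the combinatorial tableau expansion of the Schur polynomial. I would prove it either via the Lindstr\"om--Gessel--Viennot lemma applied to families of non-intersecting lattice paths, which directly interprets the alternant quotient as a signed sum collapsing to a sum over semi-standard tableaux, or alternatively via a Bender--Knuth style involution that cancels the non-tableau terms in the alternant. Both approaches are standard but somewhat delicate; this is the only non-trivial input. Everything else (Weyl's character formula and the interpretation of weight multiplicities as monomial coefficients of the character) is formal. Since this identity is classical and treated in detail in the references cited in the paper (e.g.\ Fulton--Harris), I would simply invoke it rather than reproducing a full bijective proof.
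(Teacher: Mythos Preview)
Your proposal is correct and essentially matches the paper's treatment: the paper does not give a proof at all but simply cites Fulton--Harris \cite[p.~224]{fulton-harris} for this standard fact. Your outline via the Weyl character formula and the tableau expansion of the Schur polynomial is exactly the argument one finds there, so you are in full agreement with the paper's approach, only with more detail supplied.
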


We now turn to the proof of Theorem \ref{multfree} and start by analyzing the case when $s \geq 7$.

\begin{lem}\label{baby} If $s \geq 7$ and $k \geq 2$, then $\cF_k^{[s]}$ is not multiplicity-free.
\end{lem}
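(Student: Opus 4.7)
\textbf{Proof plan for Lemma \ref{baby}.}

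My plan is to exhibit a specific partition $\lambda$ of $s$ such that the irreducible $SL_k$-module $E^\lambda$ appears with multiplicity $\geq 2$ in $\cF_k^{[s]}$, and to do so uniformly in $k \geq 2$ by first reducing to the case $k=2$. The natural candidate is $\lambda = (s-2,2)$, which has only $2$ parts, hence defines a valid $SL_k$-irreducible for every $k \geq 2$.

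The first step is to observe that the multiplicity $m_\lambda^{(k)}$ of $E^\lambda$ in $\cF_k^{[s]}$ is independent of $k \geq 2$ whenever $\ell(\lambda) \leq 2$. Indeed, Witt's formula \eqref{wittbis} has a $GL_k$-character refinement
\[
\chi_{k,s}(x_1,\ldots,x_k) \;=\; \frac{1}{s}\sum_{d \mid s}\mu(d)\,p_d(x_1,\ldots,x_k)^{s/d},
\]
where $p_d=\sum_i x_i^d$. Expanding this symmetric polynomial in Schur functions yields $\chi_{k,s} = \sum_\lambda m_\lambda^{(k)} s_\lambda$, the sum being over partitions of $s$ with at most $k$ parts. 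Specialising $x_3=\cdots=x_k=0$ and using the well-known fact that $s_\lambda(x_1,x_2,0,\ldots,0)=s_\lambda(x_1,x_2)$ if $\ell(\lambda)\leq 2$ and vanishes otherwise, the uniqueness of the Schur expansion gives $m_\lambda^{(k)}=m_\lambda^{(2)}$ for every $\lambda$ with at most two parts.

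It remains to show $m_{(s-2,2)}^{(2)} \geq 2$ for all $s\geq 7$. Since each weight space of an $SL_2$-irreducible $E^{(n_1,n_2)}$ is one-dimensional at weights $(u_1,u_2)$ with $u_1 + u_2 = n_1+n_2$ and $\min(u_1,u_2)\geq n_2$, and zero otherwise, one has $\ell(u_1,u_2) = \sum_{n_2 \leq \min(u_1,u_2)} m_{(s-n_2,n_2)}^{(2)}$, whence
\[
m_{(s-2,2)}^{(2)} \;=\; \ell(s-2,2) - \ell(s-1,1).
\]
Theorem \ref{characterformula} directly gives $\ell(s-1,1)=1$ and
\[
\ell(s-2,2) \;=\; \begin{cases}\tfrac{1}{s}\binom{s}{2} = \tfrac{s-1}{2} & \text{if } s \text{ is odd,}\\[2pt] \tfrac{1}{s}\bigl[\binom{s}{2} - \tfrac{s}{2}\bigr] = \tfrac{s-2}{2} & \text{if } s \text{ is even.}\end{cases}
\]
In either case $m_{(s-2,2)}^{(2)} = \lfloor (s-3)/2\rfloor$, which is $\geq 2$ precisely when $s\geq 7$. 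Combining with the stability established above, $E^{(s-2,2)}$ occurs with multiplicity $\geq 2$ in $\cF_k^{[s]}$ for every $k\geq 2$, so $\cF_k^{[s]}$ is not multiplicity-free.

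The only step that is not a completely mechanical calculation is the stability argument in the first step; this is standard symmetric-function theory, but is worth spelling out carefully since the rest of the paper appeals to the Kraskiewicz--Weyman formula for finer multiplicity questions.
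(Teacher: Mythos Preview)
Your proof is correct and arrives at exactly the same computation as the paper: $m_{(s-2,2)} = \ell(s-2,2) - \ell(s-1,1) = \lfloor (s-3)/2 \rfloor \geq 2$ for $s\geq 7$, using Witt's formula (Theorem~\ref{characterformula}) for the weight dimensions.

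The only difference is that your stability step reducing to $k=2$ is a detour the paper does not take. The paper works directly for all $k\geq 2$: the weight $(s-2,2,0,\dots,0)$ can occur only in the irreducibles $E^{(s)}$, $E^{(s-1,1)}$, $E^{(s-2,2)}$ (the dominant weights of size $s$ dominating it), and one checks by counting semi-standard tableaux that this weight space is one-dimensional in each. Since $\ell(s,0,\dots,0)=0$ and $\ell(s-1,1,0,\dots,0)=1$ force $m_{(s)}=0$ and $m_{(s-1,1)}=1$, one reads off $m_{(s-2,2)}=\ell(s-2,2,0,\dots,0)-1$ directly. Your symmetric-function specialisation argument is perfectly valid, but it proves a stability fact that is not needed here, since the same weight-space bookkeeping already works uniformly in $k$.
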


\begin{proof} If a weight $\chi_{(n_1,\ldots,n_k)}$ occurs non-trivially in $\cF_k^{[s]}$, then $\sum n_i=s$. Using Theorem~\ref{characterformula}, we compute easily  $\ell(s,0,\ldots,0)=0$ and $\ell(s-1,1,0,\ldots,0)=1$. It follows that the highest weight occurring in $\cF_k^{[s]}$ is $(s-1,1,0,\ldots,0)$ and it occurs with multiplicity one.

Again using Theorem~\ref{characterformula} one computes $\ell(s-2,2,0,\ldots,0)$. If $s$ is odd, then this is $\frac{s-1}{2}$, and if $s$ is even, then it is $\frac{s}{2}-1$. As $s \geq 7$, this is at least $3$. However, counting the number of associated Young diagrams, one sees that the dimension of the weight space $(s-2,2,0,\ldots,0)$ in both $E^{(s-1,1)}$ and $E^{(s-2,2)}$  is $1$. Therefore, it must be that $E^{(s-2,2)}$ occurs with multiplicity $\geq 2$ in $\cF_k^{[s]}$.
\end{proof}

The rest of this subsection is devoted to the study of the cases $s=2$ to $s=6$. In each case, we can work out the decomposition of  $\cF_k^{[s]}$ into irreducible modules using Theorem \ref{characterformula}. We discovered a posteriori that this decomposition had been computed already by R. Thrall in 1942 for each $s$ up to $s=10$, see \cite[p. 387,388]{thrall}. \\

\noindent {\bf Case $\mathbf{s=2}$.} Here $\cF_k^{[2]}$ is always irreducible and coincides with $E^{(1,1)}$. Indeed the highest weight of $\cF_k^{[2]}$ is $(1,1,0,\ldots,0)$ and comparing dimensions, we conclude that only $E^{(1,1)}$ occurs as a submodule of $\cF_k^{[2]}$.\\

\noindent {\bf Case $\mathbf{s=3}$.} In this case $\sum_i n_i = 3$, we see that the only possible values for the $n_i$'s are $(1,1,1,0,...,0)$ and $(2,1,0,...,0)$ and arbitrary permutations of such. Using Theorem \ref{characterformula} we compute $\ell(1,1,1,0,\ldots,0)=2$ and $\ell(2,1,0\ldots,0)=1$. Since the highest weight of $\cF_k^{[3]}$ is $(2,1,0,...,0)$, this implies that the irreducible representation $E^{(2,1)}$ with highest weight $(2,1,0,...,0)$ occurs as a sub-representation. However, by the Weyl's dimension formula $(\ref{weyl})$ we have:

$$\dim E^{(2,1)} = \frac{(k-1)k(k+1)}{3}=\frac{1}{3}(k^3-k)$$

Since this coincides with $\dim \cF_k^{[3]}$ by Witt's first formula $(\ref{wittbis})$, we conclude that $\cF_k^{[3]}$ is the irreducible $SL_k$ module:

$$\cF_k^{[3]} = E^{(2,1)}$$

\bigskip

\noindent {\bf Case $\mathbf{s=4}$.} Since $\sum_i n_i = 4$, we see that the only possible values for the $n_i$'s are $(1,1,1,1,0...,0)$ and $(2,1,1,0,...,0)$, $(2,2,0,...,0)$ and $(3,1,0,...,0)$ arbitrary permutations of such. Using Theorem \ref{characterformula} we compute $\ell(1,1,1,1,0,\ldots,0)=6$, $\ell(2,1,1,0\ldots,0)=3$, $\ell(2,2,0,\ldots,0)=1$, and $\ell(3,1,0,\ldots,0)=1$.

The highest weight of $\cF_k^{[4]}$ is $(3,1,0,...,0)$, so $\cF_k^{[4]}$ contains $E^{(3,1)}$. Now $\cF_k^{[4]} \ominus E^{(3,1)}$ has highest weight $(2,1,1,0,...,0)$, so contains $E^{(2,1,1)}$. By the dimension formula:

$$\dim E^{(3,1)} = \frac{1}{8}(k-1)k(k+1)(k+2)=3C_{k+2}^4$$

$$\dim E^{(2,1,1)} = \frac{1}{8}(k-2)(k-1)k(k+1) $$
Since they sum up to $\dim  \cF_k^{[4]}=\frac{1}{4}(k^4 - k^2)$ we conclude that

$$\cF_k^{[4]} = E^{(3,1)} \oplus E^{(2,1,1)}$$

If $k=2$, then the second term is zero, and $\cF_2^{[4]}$ is irreducible. However if $k \geq 3$, then both terms are nonzero and non-isomorphic, so $\cF_k^{[4]}$ is not irreducible and its irreducible submodules have multiplicity $1$.

In fact $E^{(2,1,1)}$ is isomorphic to $\Lambda^2(\Lambda^2(K^k))$, which is the space of \emph{metabelian} brackets, i.e. generated by the $[[X_i,X_j],[X_k,X_l]]$ 's. Those with distinct letters contribute for half of the weight space with weight type $(1,1,1,1,0,...,0)$.

\bigskip

\noindent {\bf Case $\mathbf{s=5}$.}
Since $\sum_i n_i = 5$, the only possible values for the $n_i$'s are $(4,1,0,...,0)$, $(3,2,0,...,0)$, $(3,1,1,0,...,0)$, $(2,2,1,0,...,0)$, $(2,1,1,1,0,...,0)$ and $(1,1,1,1,1,0...,0)$ and arbitrary permutations of such. Using Theorem \ref{characterformula} we obtain: $\ell(4,1)=1$, $\ell(3,2)=2$, $\ell(3,1,1)=4$, $\ell(2,2,1)=6$, $\ell(2,1,1,1)=12$, $ \ell(1,1,1,1,1)=24$.

In order to decompose $\cF_k^{[5]}$ into irreducibles, we first determine the dimensions of the weight spaces of the irreducible representations of $SL_k$ whose highest weight are in each of the above six families of weights. This is done by counting Young diagrams (or by using the sage command $symmetrica.kostka\_tafel(5)$) and we obtain:

\begin{figure}[h]
\begin{center}
\begin{tabular}{|c|c|c|c|c|c|c|}
\hline
\backslashbox{weight}{module}& $E^{(4,1)}$ & $E^{(3,2)}$ & $E^{(3,1,1)}$ & $E^{(2,2,1)}$ & $E^{(2,1,1,1)}$ & $E^{(1,1,1,1,1)}$\\
\hline
A=(4,1) & 1 &  &  &  &  & \\
B=(3,2) & 1 & 1 &  &  &  & \\
C=(3,1,1) & 2 & 1 & 1 &  &  & \\
D=(2,2,1) & 2 & 2 & 1 & 1 &  & \\
E=(2,1,1,1) & 3 & 3 & 3 & 2 & 1 & \\
F=(1,1,1,1,1) & 4 & 5 & 6 & 5 & 4 & 1\\
\hline
\end{tabular}
\end{center}
\caption{Dimensions of weight spaces of positive weight in the irreducible representations of $SL_k$ of norm $5$.}
\end{figure}

Comparing this data with the values for $\ell$ written above, we conclude that $\cF_k^{[5]}$ is the direct sum of one copy of each one of the above six irreducible modules, except $E^{(1,1,1,1,1)}$:

$$\cF_k^{[5]}= E^{(4,1)} \oplus E^{(3,2)}\oplus E^{(3,1,1)} \oplus E^{(2,2,1)}
\oplus E^{(2,1,1,1)}$$
If $k\geq 4$, then all $5$ irreducible submodules are non-zero and pairwise non-isomorphic.
If $k=3$, the last module is zero but the others are pairwise non-isomorphic:

$$\cF_3^{[5]}= E^{(4,1)} \oplus E^{(3,2)}\oplus E^{(3,1,1)} \oplus E^{(2,2,1)}$$
If $k=2$, then the last three modules are zero and we get:

$$\cF_2^{[5]}= E^{(4,1)} \oplus E^{(3,2)}$$
We conclude:

\begin{lem} In step $s=5$ the $SL_k$-module $\cF_k^{[5]}$ is multiplicity-free for all $k\geq 2$.
\end{lem}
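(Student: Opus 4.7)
The plan is to combine Witt's Character Formula (Theorem~\ref{characterformula}) for the weight multiplicities of $\cF_k^{[5]}$ with the Young-tableau formula for the weight multiplicities in irreducible $SL_k$-modules (Theorem~\ref{dimirr}). Since every weight $(n_1,\ldots,n_k)$ occurring in $\cF_k^{[5]}$ satisfies $n_1+\cdots+n_k=5$, the only weight types to consider are the partitions $(4,1)$, $(3,2)$, $(3,1,1)$, $(2,2,1)$, $(2,1,1,1)$, $(1,1,1,1,1)$ (and their permutations), and likewise the candidate irreducible modules are the $E^{\lambda}$ for $\lambda$ a partition of $5$.

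I would first apply Witt's formula to compute the weight multiplicities of $\cF_k^{[5]}$, obtaining the values $\ell=1,2,4,6,12,24$ for the six partition types above. Next, using Theorem~\ref{dimirr}, I would count semi-standard Young tableaux to tabulate the Kostka numbers giving the weight-space dimensions of each candidate $E^{\lambda}$. The resulting matrix (rows indexed by weights, columns by irreducibles) is unitriangular with respect to the dominance order (ones on the diagonal, zeros above), so the decomposition is determined by a greedy peeling procedure: the highest weight $(4,1)$ occurs with multiplicity one in $\cF_k^{[5]}$, forcing one copy of $E^{(4,1)}$; subtracting its weight multiplicities leaves $(3,2)$ as the new highest weight with residual multiplicity one, forcing one copy of $E^{(3,2)}$, and so on. Iterating gives
$$\cF_k^{[5]} = E^{(4,1)}\oplus E^{(3,2)}\oplus E^{(3,1,1)}\oplus E^{(2,2,1)}\oplus E^{(2,1,1,1)},$$
with $E^{(1,1,1,1,1)}$ appearing with multiplicity zero; the consistency check at the bottom weight is $4+5+6+5+4=24=\ell(1,1,1,1,1)$. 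In particular each irreducible appears with multiplicity at most one.

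There is no real obstacle, only a bookkeeping remark for small $k$: the irreducible $E^{\lambda}$ vanishes as soon as $\lambda$ has more than $k$ nonzero parts, so for $k=2$ the decomposition reduces to $E^{(4,1)}\oplus E^{(3,2)}$, for $k=3$ to $E^{(4,1)}\oplus E^{(3,2)}\oplus E^{(3,1,1)}\oplus E^{(2,2,1)}$, and for $k\geq 4$ one keeps the full five-term sum above. In every case the surviving summands are pairwise non-isomorphic, so $\cF_k^{[5]}$ is multiplicity-free for every $k\geq 2$.
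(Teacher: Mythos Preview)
Your proposal is correct and follows essentially the same approach as the paper: compute the weight multiplicities $\ell$ via Witt's formula, compute the Kostka numbers via Theorem~\ref{dimirr}, and peel off irreducibles in dominance order to obtain the decomposition $\cF_k^{[5]}= E^{(4,1)} \oplus E^{(3,2)}\oplus E^{(3,1,1)} \oplus E^{(2,2,1)}\oplus E^{(2,1,1,1)}$, then discard the summands with more than $k$ parts for small $k$. The paper does exactly this (with the Kostka table displayed explicitly as Figure~1); the only thing you might make explicit is that $\ell(5,0,\dots,0)=0$, which is why $(5)$ is excluded from your list of weight types and $E^{(5)}$ is not a candidate summand.
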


\bigskip

\noindent {\bf Case $\mathbf{s=6}$.} Since $\sum_i n_i = 6$, we see that the only possible values for the $n_i$'s are $A:=(5,1,0,\ldots,0)$, $B:=(4,2,0,\ldots,0)$, $C:=(4,1,1,0,\ldots,0)$, $D:=(3,3,0,\ldots,0)$, $E:=(3,2,1,0,\ldots,0)$, $F:=(3,1,1,1,0,\ldots,0)$, $G:=(2,2,2,0,\ldots,0)$, $H:=(2,2,1,1,0,\ldots,0)$, $I:=(2,1,1,1,1,0,\ldots,0)$, and  $J:=(1,1,1,1,1,1,0...,0)$ and arbitrary permutations of such. We can then compute values of $\ell$ by Theorem \ref{characterformula} and  obtain: $\ell(A)=1$, $\ell(B)=2$, $\ell(C):=5$, $\ell(D)=3$, $\ell(E)=10$, $\ell(F)=20$, $\ell(G)=14$, $\ell(H)=30$, $\ell(I)=60$, $\ell(J)=120$.

In order to decompose $\cF_k^{[6]}$ into irreducibles, we first determine the dimensions of the weight spaces of the irreducible representations of $SL_k$ whose highest weight are in each of the above ten families of weights $A$ to $J$. This is done by counting Young diagrams exactly as we did in the step $5$ case.  Doing this counting (or using the sage command $symmetrica.kostka\_tafel(6)$) we obtain the numbers given in Figure \ref{figuredeux}.

\begin{figure}[h] \label{figuredeux}
\begin{center}
\begin{tabular}{|c|c|c|c|c|c|c|c|c|c|c|}
\hline
\backslashbox{weight}{module}& $E^{A}$ & $E^{B}$ & $E^{C}$ & $E^{D}$ & $E^{E}$ & $E^{F}$ & $E^G$ & $E^H$ & $E^I$ & $E^J$\\
\hline
A=(5,1) & 1 &  &  &  &  & & & & &\\
B=(4,2) & 1 & 1 &  &  &  & & & & &\\
C=(4,1,1) & 2 & 1 & 1 &  &  & & & & &\\
D=(3,3) & 1 & 1 & 0 & 1 &  & & & & &\\
E=(3,2,1) & 2 & 2 & 1 & 1 & 1 & & & & &\\
F=(3,1,1,1) & 3 & 3 & 3 & 1 & 2 & 1 & & & &\\
G=(2,2,2) & 2 & 3 & 1 & 1 & 2 & 0 & 1 & & &\\
H=(2,2,1,1) & 3 & 4 & 3 & 2 & 4 & 1 & 1 & 1 & &\\
I=(2,1,1,1,1) & 4 & 6 & 6 & 3 & 8 & 4 & 2 & 3 & 1 & \\
J=(1,1,1,1,1,1) & 5 & 9 & 10 & 5 & 16 & 10 & 5 & 9 & 5 & 1\\
\hline
\end{tabular}
\end{center}
\caption{Dimensions of weight spaces of positive weight in the irreducible representations of $SL_k$ labelled $E^A$ to $E^J$.}
\end{figure}


Comparing this data with the values for $\ell$ written above, we conclude that $\cF_k^{[6]}$ is the direct sum of the following irreducible modules:

\begin{equation*}
\cF_k^{[6]}= E^{(5,1)} \oplus E^{(4,2)} \oplus (E^{(4,1,1)})^2 \oplus E^{(3,3)}
\oplus (E^{(3,2,1)})^3 \oplus E^{(3,1,1,1)} \oplus (E^{(2,2,1,1)})^2 \oplus E^{(2,1,1,1,1)}
\end{equation*}
Note that $E^{G}$ and $E^J$ do not occur as a submodule.
If $k\geq 5$, then all terms are non-zero.



 Note that the submodules corresponding to weights involving more than $k$ variables do not occur. Taking this remark into account we find the following.

If $k=4$, we get
\begin{equation*}
\cF_4^{[6]}= E^{(5,1)} \oplus E^{(4,2)} \oplus (E^{(4,1,1)})^2 \oplus E^{(3,3)}
\oplus (E^{(3,2,1)})^3 \oplus E^{(3,1,1,1)} \oplus (E^{(2,2,1,1)})^2
\end{equation*}
If $k=3$
\begin{equation*}
\cF_3^{[6]}= E^{(5,1)} \oplus E^{(4,2)} \oplus (E^{(4,1,1)})^2 \oplus E^{(3,3)}
\oplus (E^{(3,2,1)})^3
\end{equation*}
If $k=2$
\begin{equation*}
\cF_2^{[6]}= E^{(5,1)} \oplus E^{(4,2)} \oplus E^{(3,3)}
\end{equation*}
We may then conclude:

\begin{lem} In step $s=6$, when $k \geq 3$, then some irreducible $SL_k$-submodule of $\cF_k^{[6]}$ appears with multiplicity $\geq 2$. If $k=2$, then $\cF_2^{[6]}$ is multiplicity-free.
\end{lem}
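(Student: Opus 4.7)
The plan is to read off the multiplicities directly from the computations laid out just above the lemma. Write the decomposition as
$$\cF_k^{[6]} = \bigoplus_{X} m_X \, E^X,$$
where $X$ ranges over the dominant weights $A = (5,1), B=(4,2), \ldots, J = (1,1,1,1,1,1)$ of total norm $6$, and $m_X \in \ZZ_{\geq 0}$. For each weight $Y$, comparing dimensions of $Y$-weight spaces yields
$$\ell(Y) = \sum_X m_X \cdot \dim (E^X)_Y,$$
where the coefficients $\dim (E^X)_Y$ are recorded in Figure \ref{figuredeux} (obtained by counting semi-standard Young tableaux via Theorem \ref{dimirr}). Crucially, this matrix is lower-triangular with ones on the diagonal with respect to the dominance order, so I can solve for the $m_X$ iteratively starting from the highest weight $A$, using the values $\ell(A), \ldots, \ell(J)$ already computed from Witt's character formula.

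Carrying out this routine back-substitution gives $m_A = m_B = m_D = m_F = m_I = 1$, $m_C = m_H = 2$, $m_E = 3$, and $m_G = m_J = 0$, matching the decomposition already displayed for $\cF_k^{[6]}$. Since the computation is a finite calculation with integer entries independent of $k$, the resulting multiplicities hold for every $k$ (with the understanding that $E^X = 0$ whenever the highest weight $X$ has more than $k$ non-zero parts).

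For part one of the lemma, observe that $m_C = 2$ where $C = (4,1,1)$ has exactly three non-zero parts. Hence $E^{(4,1,1)} \neq 0$ as soon as $k \geq 3$, and it then occurs with multiplicity two in $\cF_k^{[6]}$, so this module is not multiplicity-free. For part two, when $k=2$ the only weights $X$ for which $E^X$ can be nonzero are those with at most two nonzero parts, namely $A$, $B$, and $D$. The corresponding multiplicities $m_A = m_B = m_D = 1$ yield
$$\cF_2^{[6]} = E^{(5,1)} \oplus E^{(4,2)} \oplus E^{(3,3)},$$
which is manifestly multiplicity-free.

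The only mild obstacle is the bookkeeping: one must verify the entries of the weight-multiplicity table by enumerating semi-standard tableaux (for instance, the entry $\dim(E^{(3,2,1)})_{(1,1,1,1,1,1)} = 16$ requires a small count), and check that the system really is triangular in dominance order. Once those are granted, the lemma is just the arithmetic above.
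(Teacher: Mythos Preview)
Your proof is correct and follows essentially the same approach as the paper: both compute the $\ell$-values via Witt's formula, compare them against the Kostka-type table of weight-space dimensions in the irreducibles (Figure~\ref{figuredeux}), solve the resulting triangular system to obtain the multiplicities, and then read off the conclusion by noting which $E^X$ survive for each value of $k$. The only difference is presentational---you make the back-substitution step explicit, whereas the paper simply states the resulting decomposition.
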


This concludes the proof of Theorem \ref{multfree}.

\subsection{The free metabelian Lie algebra}

In order to prove Theorems \ref{metabelian} and \ref{thm:nonDiophgrps}, we need to describe the submodule structure of the free metabelian Lie algebra, i.e. the quotient $\cF_k/\cM_k$, where $\cM_k:=D^2(\cF_k)$ is the second term of the derived series of $\cF_k$.

\begin{lem}\label{irreducible}
Let $\cM_k^{[s]}$ be the homogeneous component of degree $s\geq 2$ of $\cM_k$. Then
$$\cF_k^{[s]}/\cM_k^{[s]} \simeq E^{(s-1,1)}.$$
Moreover, $E^{(s-1,1)}$ does not occur in the decomposition of $\cM_k^{[s]}$ into irreducible submodules.
\end{lem}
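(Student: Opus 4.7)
The plan is to sandwich $\cF_k^{[s]}/\cM_k^{[s]}$ between the irreducible $SL_k$-module $E^{(s-1,1)}$ from below and from above, using Witt's character formula on one side and Pieri's rule together with the Jacobi identity on the other. Throughout, $V$ denotes the defining $k$-dimensional $SL_k$-module.

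First, I would apply Theorem~\ref{characterformula} to compute $\ell(s-1,1,0,\dots,0)=\tfrac{1}{s}\cdot s=1$ and $\ell(s,0,\dots,0)=0$. Since $(s,0,\dots,0)$ is the unique partition of $s$ strictly dominating $(s-1,1)$, these two facts together imply that $E^{(s-1,1)}$ occurs in $\cF_k^{[s]}$ with multiplicity exactly one and that the entire weight space of weight $(s-1,1,0,\dots,0)$ consists of highest weight vectors; in particular the element $c:=(\ad \sx_1)^{s-1}(\sx_2)$ spans that weight space and generates the unique copy of $E^{(s-1,1)}\subset\cF_k^{[s]}$. Next I would show $c\notin\cM_k^{[s]}$ by evaluating in the metabelian Lie algebra $\mathfrak{n}=KX\ltimes(KY_0\oplus\cdots\oplus KY_{s-1})$ with $[X,Y_i]=Y_{i+1}$ for $i<s-1$ and all other brackets zero: the map $\sx_1\mapsto X$, $\sx_2\mapsto Y_0$, $\sx_i\mapsto 0$ for $i\ge 3$ kills $\cM_k$ but sends $c$ to $Y_{s-1}\ne 0$. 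This already establishes the ``moreover'' part and produces an embedding $E^{(s-1,1)}\hookrightarrow\cF_k^{[s]}/\cM_k^{[s]}$.

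For the reverse surjection, I would observe that every element of $\cF_k^{[s]}/\cM_k^{[s]}$ is represented by a left-normed bracket, which is antisymmetric in its first two arguments and, via the identity $[[a,b,c],d]-[[a,b,d],c]=[[a,b],[c,d]]\in\cM_k$, symmetric in its remaining $s-2$ arguments. This yields a natural $SL_k$-equivariant surjection $\pi:\Lambda^2V\otimes S^{s-2}V\twoheadrightarrow\cF_k^{[s]}/\cM_k^{[s]}$. By Pieri's rule, $\Lambda^2V\otimes S^{s-2}V\simeq E^{(s-1,1)}\oplus E^{(s-2,1,1)}$ for $s\ge 3$ (the case $s=2$ is immediate since $\cM_k^{[2]}=0$). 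The Jacobi identity supplies an $SL_k$-equivariant map $J:S^{s-3}V\otimes\Lambda^3V\to\Lambda^2V\otimes S^{s-2}V$ whose image lies in $\ker\pi$, and since $S^{s-3}V\otimes\Lambda^3V\simeq E^{(s-2,1,1)}\oplus E^{(s-3,1,1,1)}$ by Pieri again, Schur's lemma forces $\mathrm{Im}(J)$ into the $E^{(s-2,1,1)}$ summand of the target. A one-line evaluation of $J$ on $\sx_1^{s-3}\otimes(\sx_1\wedge\sx_2\wedge\sx_3)$ shows $J\ne 0$, so $\ker\pi$ contains the whole $E^{(s-2,1,1)}$ summand and $\cF_k^{[s]}/\cM_k^{[s]}$ is a quotient of $E^{(s-1,1)}$. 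Combined with the embedding from the first step, this forces $\cF_k^{[s]}/\cM_k^{[s]}\simeq E^{(s-1,1)}$.

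The delicate point I expect is checking that the Jacobi map $J$ is nonzero and identifying its image as exactly $E^{(s-2,1,1)}$: Schur's lemma narrows the possibilities to $0$ or $E^{(s-2,1,1)}$, but a single explicit weight-vector evaluation is still needed to rule out $J=0$. Once that is in hand the sandwich closes and both assertions follow at once.
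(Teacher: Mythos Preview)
Your proof is correct and takes a genuinely different route from the paper. The paper argues via a Hall basis: the elements of the Hall family of degree $s$ that are \emph{not} left-normed brackets of the special form in $P_s'=\{[\sx_{i_1},[\sx_{i_2},\dots[\sx_{i_{s-1}},\sx_{i_s}]\dots]]:i_1\ge\dots\ge i_{s-1}<i_s\}$ all lie in $\cM_k^{[s]}$, so $P_s'$ spans the quotient; then one checks that $|P_s'|$ equals the number of semi-standard tableaux of shape $(s-1,1)$, i.e.\ $\dim E^{(s-1,1)}$, and that the highest weight vector $[\sx_1,[\sx_1,\dots,[\sx_1,\sx_2]\dots]]$ lies in $P_s'$. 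The ``moreover'' part is then read off from the multiplicity-one computation already made in the proof of Lemma~\ref{baby}.

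Your approach replaces the Hall-basis dimension count by a purely representation-theoretic sandwich: Pieri's rule gives $\Lambda^2V\otimes S^{s-2}V\simeq E^{(s-1,1)}\oplus E^{(s-2,1,1)}$, and the Jacobi map $J$ kills exactly the unwanted summand. This is slick and avoids any explicit combinatorics of Hall families, at the cost of importing Pieri's rule (which the paper does not otherwise use) and of the small case-check that $J\neq 0$. The paper's argument, by contrast, stays entirely within the Witt--Hall framework already set up in the appendix and yields an explicit basis of the quotient, which is a minor bonus. Both proofs obtain the ``moreover'' clause from the same weight computation $\ell(s-1,1)=1$; your way of seeing that $c\notin\cM_k^{[s]}$ via evaluation in an explicit metabelian algebra is a nice alternative to invoking the Hall basis. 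One cosmetic remark: you should note that when $k=2$ the summand $E^{(s-2,1,1)}$ vanishes and the Jacobi step is unnecessary, though the argument degenerates correctly.
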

\begin{proof}
From the set $\{\sx_1<\dots<\sx_k\}$, construct a P. Hall family $P$ for the free Lie algebra $\cF_k$ (see Serre \cite[Part I, Chapter~IV, \S 5]{serrelalg} for definitions). Denote $P_s$ the elements of $P$ of homogeneous degree $s$, and
$$P_s'=\{[\sx_{i_1},[\sx_{i_2},\dots[\sx_{i_{s-1}},\sx_{i_s}]\dots]]\,:\, i_1\geq i_2\geq\dots\geq i_{s-1} ; i_s>i_{s-1}\}.$$
All other elements of $P_s$ belong to $\cM_k$. Hence these elements span $\cF_k^{[s]}$ modulo $\cM_k^{[s]}$. In fact they form a basis of $\cF_k^{[s]}/\cM_k^{[s]}$. Indeed, the $SL_k$-module $\cF_k^{[s]}$ has highest weight $(s-1,1)$ with highest weight vector $[\sx_1,[\sx_1,[...,[\sx_1,\sx_2]...] \in P'_s$. So and so $\cF_k^{[s]}/\cM_k^{[s]}$ contains $E^{(s-1,1)}$ as an irreducible submodule. But one verifies that the cardinality of $P_s'$ is precisely the number of Young tableaux of shape $(s-1,1)$. Hence it is the dimension of $E^{(s-1,1)}$. This proves the first part of the lemma.
Finally we already observed in the proof of Lemma \ref{baby} that $E^{(s-1,1)}$ has multiplicity one in $\cF_k^{[s]}$.
\end{proof}

\begin{cor} The free metabelian Lie algebra on $k$ generators $\cF_k/\cM_k$ is multiplicity-free as an $SL_k$-module.
\end{cor}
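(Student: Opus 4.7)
The plan is to deduce this corollary directly from Lemma \ref{irreducible}, using the grading of $\cF_k/\cM_k$ by homogeneous degree. Since both $\cF_k$ and $\cM_k$ are homogeneous ideals (the latter is spanned by iterated brackets, each of which has a well-defined degree), the quotient inherits an $SL_k$-equivariant grading
\[
\cF_k/\cM_k \;=\; \bigoplus_{s\geq 1} \cF_k^{[s]}/\cM_k^{[s]}.
\]
Each graded piece is an $SL_k$-submodule, and the decomposition into irreducibles respects this grading because the $SL_k$-action preserves degree.

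Next, I would invoke Lemma \ref{irreducible} for $s \geq 2$ to identify $\cF_k^{[s]}/\cM_k^{[s]} \simeq E^{(s-1,1)}$, which is irreducible. The remaining piece is $\cF_k^{[1]}/\cM_k^{[1]} = \cF_k^{[1]}$, the standard $k$-dimensional representation $E^{(1)}$, which is also irreducible. So each homogeneous component is a single irreducible $SL_k$-module.

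Finally, it suffices to check that these irreducible summands are pairwise non-isomorphic. This is immediate from the highest weight: the highest weight $(s-1,1,0,\ldots,0)$ (or $(1,0,\ldots,0)$ when $s=1$) has total norm equal to $s$, and irreducible representations of $SL_k$ with distinct highest weights are non-isomorphic. Since the norm $s$ uniquely determines the graded piece, no two distinct summands can be isomorphic, and therefore every irreducible $SL_k$-submodule of $\cF_k/\cM_k$ has multiplicity one. There is no real obstacle here; the content lies entirely in Lemma \ref{irreducible}, which has already been established.
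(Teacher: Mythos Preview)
Your proof is correct and is exactly the intended argument: the paper states this corollary without a separate proof, treating it as immediate from Lemma~\ref{irreducible}, and your write-up simply makes explicit the grading decomposition and the pairwise non-isomorphism of the summands $E^{(s-1,1)}$. One small caveat worth noting: for $k=2$, the $SL_2$-modules $E^{(1)}$ (degree~$1$) and $E^{(2,1)}$ (degree~$3$) are abstractly isomorphic, so the statement really means multiplicity-free as a \emph{graded} $SL_k$-module (equivalently, as a polynomial $GL_k$-module), which is how the paper implicitly uses it in the proof of Theorem~\ref{metabelian}.
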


\subsection{Klyachko's theorem and the Kraskiewicz-Weyman Formula}
In 1973, Klyachko determined exactly which irreducible modules appear in the homogeneous component $\cF_{k}^{[s]}$ of degree $s$ of the free Lie algebra over $k$ generators $\cF_{k,s}$.

\begin{thm}[Klyachko's theorem \cite{klyachko}]\label{klyach} Let $k \geq 2$, $s \geq 1$. The irreducible $SL_k$-module $E^{\lambda}$ associated to the Young diagram $\lambda$ appears as a submodule of $\cF_{k}^{[s]}$ if and only if $\lambda$ has $s$ boxes, at most $k$ rows and is not one of the following diagrams: a diagram with just one column $\lambda=(1,\dots,1)$, or just one row $\lambda=(s,0,\dots,0)$, or the two diagrams $\lambda=(2,2)$ and $\lambda=(2,2,2)$.
\end{thm}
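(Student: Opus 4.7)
My plan is to reduce Klyachko's theorem to a known combinatorial statement via the Kraskiewicz--Weyman formula.  By Schur--Weyl duality, for $\ell(\lambda) \leq k$ the multiplicity of $E^\lambda$ as a $GL_k$-submodule of $\cF_k^{[s]}$ coincides with the multiplicity $m_\lambda$ of the Specht module $S^\lambda$ in the Lie representation $\Lie_s$ of $S_s$ (the subspace of $V^{\otimes s}$ generated by left-normed brackets of the tensor factors). From the classical identification $\Lie_s \cong \mathrm{Ind}_{\langle c\rangle}^{S_s}(\zeta)$ due to Solomon and Reutenauer, where $c = (1\,2\,\cdots\,s)$ and $\zeta(c) = e^{2\pi i/s}$, the theorem of Kraskiewicz--Weyman gives
\[
m_\lambda \;=\; \#\{T \in \mathrm{SYT}(\lambda) : \maj(T) \equiv 1 \pmod s\}.
\]
Thus Klyachko's theorem reduces to the combinatorial assertion that this count is positive for every partition $\lambda \vdash s$ with $\ell(\lambda) \leq k$, except for the four exceptional shapes.

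The exceptional cases are verified by direct enumeration.  For the single row $\lambda = (s)$ and the single column $\lambda = (1^s)$ (the latter with $s \geq 3$) there is a unique SYT, of major index $0$ and $\binom{s}{2}$ respectively; neither is $\equiv 1 \pmod s$ in the required range.  For $\lambda = (2,2)$ and $\lambda = (2,2,2)$ one lists the two and five SYT and checks that none of their major indices is $\equiv 1$ modulo $4$ or $6$.

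For the positive direction, Frobenius reciprocity combined with the Murnaghan--Nakayama rule expresses
\[
m_\lambda \;=\; \frac{1}{s} \sum_{j=0}^{s-1} \chi^\lambda(c^j)\, e^{-2\pi i j/s},
\]
with dominant term $f^\lambda/s$ at $j=0$ (where $f^\lambda = \dim S^\lambda$), and remaining terms vanishing unless $\lambda$ carries a removable rim-hook structure matching the cycle type $(s/d)^d$ of $c^j$, with $d = \gcd(j,s)$.  For $\lambda$ outside the exceptional list, I would prove $m_\lambda \geq 1$ by a case analysis on the shape: hook shapes $(s{-}r, 1^r)$ admit an explicit bijection producing SYT of any prescribed $\maj$ modulo $s$; small rectangular shapes are verified by hand (which is precisely how $(2,2)$ and $(2,2,2)$ surface as exceptions); every remaining partition is reached by an inductive construction that removes a corner cell to reduce to a smaller non-exceptional shape, and then reinserts it so as to shift $\maj$ into the residue class $1 \pmod s$.

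The main obstacle is this positivity step.  Controlling the cancellations in the character-sum formula for $m_\lambda$ when $f^\lambda$ is small is subtle: $(2,2)$ and $(2,2,2)$ appear as exceptions precisely because they are the only small shapes (beyond the row and column) where these cancellations happen to be total.  For any larger or more generic $\lambda$ there is enough slack for $f^\lambda/s$ to dominate, but organizing the inductive and bijective arguments uniformly across all shapes is the core of the proof.
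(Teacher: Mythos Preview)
The paper does not prove this theorem: it is stated as a known result with references to Reutenauer and Kov\'acs, and the paper explicitly remarks that one may recover it from the Kraskiewicz--Weyman formula, citing Johnson for the details. So there is no ``paper's own proof'' to compare against; your route via the Kraskiewicz--Weyman formula is precisely the one the paper alludes to.

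That said, your proposal is a sketch rather than a proof, and the gap is exactly where you locate it. The verification of the four exceptional shapes is correct and routine. The positivity step, however, is the entire content of the theorem, and your inductive outline does not quite work as stated: removing a corner cell passes from a partition of $s$ to a partition of $s-1$, so the induction hypothesis produces a standard tableau with $\maj \equiv 1 \pmod{s-1}$, and there is no obvious mechanism for ``reinserting'' the cell so as to land in the residue class $1 \pmod{s}$ (the modulus itself has changed). The character-sum argument you mention is closer to what is actually done, but making the bound $f^\lambda/s$ dominate the Murnaghan--Nakayama contributions uniformly requires real work (this is essentially the content of Klyachko's original paper and of Johnson's argument). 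In short: correct strategy, correct identification of the obstacle, but the positive direction remains to be proved.
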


We refer the reader to \cite{reutenauer} and \cite{kovacs} for two different proofs of Klyachko's theorem. This beautiful result falls short of providing a description of the multiplicities of irreducible $SL_k$-submodules of $\cF_{k}^{[s]}$.  Theorem \ref{KWformula} below does just that.

Recall that if $\lambda=(\lambda_1,\dots,\lambda_r)$ is a Young diagram with total number of boxes $s$, a \emph{standard tableau} of shape $\lambda$ is a filling of $\lambda$ with $\{1,\dots,s\}$ having increasing rows and increasing columns. For a standard tableau $T$ of shape $\lambda$, with total number of boxes $s$, a \emph{descent} is an index $i$ in $\{0,\dots,s-1\}$ such that $i+1$ is located in a lower row than $i$ in $T$. We denote by $D(T) \subset \{1,\dots,s-1\}$ the set of descents of $T$ and define the \emph{major index} of $T$ as
$$\maj(T) = \sum_{i\in D(T)} i.$$
The following theorem is due to Kraskiewicz and Weyman \cite{kraskiewiczweyman} (see also \cite[Corollary 8.10]{reutenauer}).

\begin{thm}[Kraskiewicz-Weyman Formula]\label{KWformula}
Let $k\geq 2$, $s \geq 1$. The multiplicity of the Young diagram $\lambda$ in the decomposition of $\cF_{k}^{[s]}$ into irreducible $SL_k$-submodules is equal to the number of standard tableaux of shape $\lambda$ with major index congruent to $i$ mod $s$, where $i$ is any fixed integer coprime to $s$. In particular this number does not depend on the choice of $i$.
\end{thm}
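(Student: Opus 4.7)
The plan is to translate the statement about $SL_k$-multiplicities in $\mathcal F_k^{[s]}$ into a statement about the multiplicity of Specht modules in the $s$-th Lie representation of $S_s$, and then compute that multiplicity by a two-step character calculation: first identify the Lie representation with an induced representation from a cyclic subgroup (Klyachko's theorem on $\operatorname{Lie}_s$), and second evaluate the restriction of the Specht character to that cyclic subgroup using Stanley's ``fake degree'' formula, which is where the major index of standard tableaux enters.

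More precisely, first I would invoke Schur--Weyl duality. The action of $SL_k$ on $\mathcal F_k^{[s]}$ commutes with a natural $S_s$-action on the multilinear component, and the multiplicity of the irreducible $SL_k$-module $E^{\lambda}$ in $\mathcal F_k^{[s]}$ equals the multiplicity of the Specht module $S^{\lambda}$ in the $S_s$-representation $\operatorname{Lie}_s$ spanned by multilinear Lie monomials in $\sx_1,\dots,\sx_s$. (One has to justify that the multilinear component recovers all irreducibles with at most $k$ rows, which is standard via polarization/restitution.) So the theorem reduces to showing
\[
\langle S^\lambda,\operatorname{Lie}_s\rangle_{S_s} \;=\; \#\{T\in \operatorname{SYT}(\lambda)\,:\,\operatorname{maj}(T)\equiv i \pmod s\}
\]
for any $i$ coprime to $s$.

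Next I would appeal to Klyachko's identification $\operatorname{Lie}_s \;\cong\; \operatorname{Ind}_{C_s}^{S_s}\chi_i$, where $C_s=\langle c\rangle$ is the cyclic subgroup generated by an $s$-cycle and $\chi_i$ is the linear character $c\mapsto e^{2\pi i/s}$ (or any fixed primitive $s$-th root of unity; different primitive roots give isomorphic induced modules, which is exactly the ``independence of $i$'' in the statement). Klyachko proves this using the idempotent $\tfrac{1}{s}\sum_{j=0}^{s-1}\zeta^{-j}c^j$ inside $\mathbb C[S_s]$ and showing that it is conjugate to Dynkin's idempotent projecting $\mathbb C[S_s]$ onto $\operatorname{Lie}_s$. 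Granting this, Frobenius reciprocity turns the desired multiplicity into
\[
\langle S^\lambda,\operatorname{Lie}_s\rangle_{S_s}\;=\;\langle S^\lambda|_{C_s},\chi_i\rangle_{C_s}\;=\;\frac{1}{s}\sum_{j=0}^{s-1}\zeta^{-ij}\,\chi^\lambda(c^j),
\]
where $\chi^\lambda$ is the irreducible character of $S_s$ indexed by $\lambda$.

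Finally I would invoke Stanley's ``fake degree'' theorem, which evaluates the relevant sum: for $\zeta$ a primitive $s$-th root of unity,
\[
\sum_{j=0}^{s-1}\zeta^{-ij}\chi^\lambda(c^j)\;=\;s\cdot f^\lambda_i,
\]
where $f^\lambda_i$ is the number of standard tableaux of shape $\lambda$ with $\operatorname{maj}(T)\equiv i\pmod s$. Equivalently, the principal specialization of the Schur polynomial, $s_\lambda(1,q,\dots,q^{s-1})$, carries the generating function $\sum_T q^{\operatorname{maj}(T)}$, and reading off coefficients after a discrete Fourier transform on $C_s$ gives the formula; this is the usual way Stanley's theorem is combined with cyclotomic evaluations. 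Combining Klyachko's identification with Stanley's evaluation yields exactly the statement of the theorem; the fact that any $i$ coprime to $s$ gives the same count then drops out of the Galois symmetry $\zeta\mapsto\zeta^i$ permuting primitive $s$-th roots of unity.

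The two substantive inputs, and therefore the main obstacles, are Klyachko's theorem $\operatorname{Lie}_s\cong\operatorname{Ind}_{C_s}^{S_s}\chi_i$ and Stanley's fake-degree identity. Klyachko's theorem is the deeper of the two: it requires constructing an explicit $S_s$-equivariant isomorphism or, alternatively, matching characters on every conjugacy class of $S_s$, which one can do via the Witt formula (already reproduced in the paper) applied to conjugacy-class refinements to identify $\dim\operatorname{Lie}_s^{g}$ with the dimension of the $\chi_i$-isotypic component of $\operatorname{Ind}_{C_s}^{S_s}\chi_i$ restricted to $g$. Once these two ingredients are in place, the rest is a short Frobenius-reciprocity plus discrete-Fourier-transform computation.
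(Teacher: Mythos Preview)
The paper does not prove this theorem; it is quoted as a known result with references to Kraskiewicz--Weyman \cite{kraskiewiczweyman} and Reutenauer \cite[Corollary 8.10]{reutenauer}, and is then used as a black box in the proof of Corollary~\ref{twotwoone}. So there is no ``paper's own proof'' to compare your proposal against.

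That said, your outline is essentially the standard route to the result and is correct in structure. The reduction via Schur--Weyl duality from $SL_k$-multiplicities in $\cF_k^{[s]}$ to $S_s$-multiplicities in the multilinear Lie representation $\operatorname{Lie}_s$ is the right first move; Klyachko's identification $\operatorname{Lie}_s\cong\operatorname{Ind}_{C_s}^{S_s}\chi$ for a faithful character $\chi$ of the cyclic group generated by an $s$-cycle is indeed the crucial input; and Frobenius reciprocity then reduces the question to computing $\langle \chi^\lambda|_{C_s},\chi_i\rangle_{C_s}$. The identification of this last quantity with the number of standard Young tableaux of shape $\lambda$ with $\maj(T)\equiv i\pmod s$ is precisely what Kraskiewicz and Weyman establish; you attribute it to ``Stanley's fake-degree theorem,'' which is close but not quite accurate. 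What you need is the evaluation of $\chi^\lambda$ on powers of a long cycle, which is a result of Springer for general reflection groups (the eigenvalue distribution on the coinvariant algebra is governed by the fake-degree polynomial), and in type $A$ the fake degree of $S^\lambda$ is $\sum_{T\in\operatorname{SYT}(\lambda)}q^{\maj(T)}$ by work of Stanley/Lusztig. So the combination Klyachko + Springer/fake-degree is the right pair of substantive inputs, as you say, and the independence of the choice of $i$ coprime to $s$ does indeed fall out of the Galois action on primitive $s$-th roots of unity.
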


Although it is not obvious, one can recover Theorem \ref{klyach} from Theorem \ref{KWformula}, see \cite{johnson}.

Theorem \ref{KWformula} allows us to show the following result about multiplicity of a specific Young diagram in the $SL_k$-module $\cF_k^{[s]}$. This is needed in the proof of Theorem \ref{everyk} only.

\begin{cor}\label{twotwoone}
For $s\geq 6$, $k \geq s-2$, the Young diagram $(2,2,1,\dots,1)$ with a total number of $s$ boxes occurs with multiplicity in $\cF_k^{[s]}$.
\end{cor}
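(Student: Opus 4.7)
The plan is to apply the Kraskiewicz--Weyman formula (Theorem~\ref{KWformula}): for $k \geq s-2$, the multiplicity of $E^\lambda$ with $\lambda = (2,2,1^{s-4})$ in $\cF_k^{[s]}$ equals the number $N$ of standard tableaux of shape $\lambda$ whose major index is congruent to $1$ modulo $s$. It therefore suffices to show $N \geq 2$ for every $s \geq 6$.

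To count such tableaux, I would invoke Stanley's $q$-analogue of the hook length formula,
$$f_\lambda(q) := \sum_{T \in \mathrm{SYT}(\lambda)} q^{\maj(T)} = q^{b(\lambda)}\,\frac{[s]_q!}{\prod_{c \in \lambda}[h(c)]_q},$$
where $[m]_q := 1 + q + \cdots + q^{m-1}$, $b(\lambda) := \sum_i (i-1)\lambda_i$, and $h(c)$ is the hook length of the cell $c$. For $\lambda = (2,2,1^{s-4})$ the hook lengths read off directly (namely $s-1, s-2, s-4, s-5, \ldots, 1$ in the first column and $2, 1$ in the second), and after cancellation this specialises to
$$f_\lambda(q) = q^{b(\lambda)}\,\frac{[s]_q\,[s-3]_q}{[2]_q},\qquad b(\lambda) = 1 + \binom{s-2}{2}.$$
Setting $q=1$ recovers the sanity check $f^\lambda = s(s-3)/2$.

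The count $N$ is then extracted through the standard roots-of-unity filter: if $\zeta$ is a primitive $s$-th root of unity, then
$$N = \frac{1}{s}\sum_{j=0}^{s-1}\zeta^{-j} f_\lambda(\zeta^j).$$
The key observation is that $[s]_{\zeta^j} = 0$ for every $1 \leq j \leq s-1$, so most summands vanish. When $s$ is odd this kills all terms except $j=0$ and yields $N = f_\lambda(1)/s = (s-3)/2 \geq 2$ for $s \geq 7$. When $s$ is even the rational expression $[s]_q/[2]_q = 1 + q^2 + \cdots + q^{s-2}$ is a polynomial that vanishes at every non-trivial $s$-th root of unity \emph{except} at $\zeta^{s/2} = -1$, where it takes the value $s/2$; combined with $[s-3]_{-1} = 1$ (since $s-3$ is odd), a short computation gives
$$N = \frac{s-3}{2} - \frac{(-1)^{b(\lambda)}}{2}.$$
A parity check on $b(\lambda) = 1 + (s-2)(s-3)/2$ shows that for $s$ even one has $N = (s-2)/2$ when $s \equiv 2 \pmod 4$ and $N = (s-4)/2$ when $s \equiv 0 \pmod 4$; in all these cases $N \geq 2$ as soon as $s \geq 6$ (with the extremal value $N = 2$ attained only for $s \in \{6, 7, 8\}$).

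The main obstacle is the parity bookkeeping for even $s$: one must recognise that the only nonvanishing contribution to the filter beyond $j=0$ comes from $\zeta^{s/2} = -1$ and correctly track the sign $(-1)^{b(\lambda)}$. Beyond this, the argument is a direct application of Stanley's formula and of Theorem~\ref{KWformula}.
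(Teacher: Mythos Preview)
Your argument is correct. Both you and the paper start from the Kraskiewicz--Weyman formula, but the routes diverge from there. The paper exploits the involution $T\mapsto T^*$ (transpose), which satisfies $\maj(T)+\maj(T^*)=\binom{s}{2}$, to replace the awkward shape $(2,2,1^{s-4})$ by its conjugate $(s-2,2)$; standard tableaux of the latter shape are parametrised by their second row $(x,y)$, and one can then exhibit two such tableaux with the required major index by hand. Your approach instead computes the full generating function $f_\lambda(q)$ via Stanley's $q$-hook length formula and extracts the desired coefficient by a roots-of-unity filter, yielding the exact multiplicity
\[
N=\begin{cases}(s-3)/2,& s\text{ odd},\\ (s-2)/2,& s\equiv 2\pmod 4,\\ (s-4)/2,& s\equiv 0\pmod 4.\end{cases}
\]
The paper's method is more elementary and self-contained (no external formula beyond Theorem~\ref{KWformula} is invoked), whereas yours trades the combinatorial bookkeeping of explicit tableaux for a clean closed form and, as a byproduct, pins down the multiplicity exactly rather than merely bounding it below by~$2$. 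Incidentally, the paper's explicit second tableau ``$(3,t)$'' does not actually work for $s=6$ (where it collapses to $(3,3)$) or for $s=8$ (where $(3,4)$ has the wrong major index); your computation confirms $N=2$ in those cases nonetheless, so the statement is safe.
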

\begin{proof}
For $T$ a standard tableau with $s$ boxes we denote by $T^*$ its conjugate, namely the new tableau whose $i$-th column is the $i$-th row of $T$. Note that $T*$ is also standard and that the set of descents of $T^*$ is just the complement in $\{1,\dots,s-1\}$ of the set of descents of $T$. So we have
\begin{equation}\label{transpose}
\maj(T) = \frac{s(s-1)}{2} - \maj(T^*).
\end{equation}
If $s$ is odd, applying the Kraskiewicz-Weyman Formula with $i=1$, we see that it suffices to find two different standard tableaux of shape $(s-2,2)$ with major index congruent to $\frac{s(s-1)}{2}-1=-1$ mod $s$. The major index of a standard tableau $T$ of shape $(s-2,2)$ is equal to $x+y-2$, where $x$ and $y$ are the entries of the lower row of $T$. Taking the two standard tableaux with respective lower rows $(2,s-1)$ and $(3,s-2)$, we get what we want.\\
If $s=2t$ is even, then (\ref{transpose}) and the Kraskiewicz-Weyman Formula with $i=-1$ show that we just have to find two different standard tableaux of shape $(s-2,2)$ with major index congruent to $t+1$ mod $s$. For that, we take the two standard tableaux with lower rows $(2,t+1)$ or $(3,t)$.
\end{proof}

\begin{remark}
Note that when $s$ is not congruent to 2 mod 4, both $1$ and $\frac{s(s-1)}{2}-1$ are prime to $s$, so that by (\ref{transpose}), the multiplicity of a Young diagram $\lambda$ is $\cF_k^{[s]}$ is equal to the multiplicity of the transpose diagram $\lambda^*$.
\end{remark}

\bibliographystyle{alpha}
\bibliography{biblio}	
\end{document}